\newtheorem{theorem}{Theorem}
\newtheorem{corollary}[theorem]{Corollary}
\newtheorem{definition}[theorem]{Definition}
\newtheorem{lemma}[theorem]{Lemma}
\newtheorem{notation}[theorem]{Notation}
\newtheorem{remark}[theorem]{Remark}
\newenvironment{proof}[1][Proof]{\noindent\textbf{#1.} }{\ \rule{0.5em}{0.5em}}
\begin{document}

\title{Rough differential equation in Banach space driven by\\
weak geometric $p$-rough path}
\author{Terry J. Lyons\thanks{%
University of Oxford and Oxford-Man Institute, Email:
terry.lyons@oxford-man.ox.ac.uk} \ \ \ \ Danyu Yang\thanks{%
University of Oxford and Oxford-Man Institute, Email: yangd@maths.ox.ac.uk}}
\maketitle

\begin{abstract}
By using an explicit ordinary differential equation to approximate the
exponential solution flow, we extend the universal limit theorem to rough
differential equation in Banach space driven by weak geometric rough path,
and give the quantitative dependence of solution in term of the initial
value, vector field and driving rough path.
\end{abstract}

\section{Introduction}

Rough paths theory gives meaning to systems driven by rough signals, and
provides a robust solution which is continuous with respect to the driving
signal in rough path metric. There are several formulations of rough paths
theory. In his original paper, Lyons \cite{Terry98} developed the theory of
rough paths. He works with rough differential equation driven by geometric $%
p $-rough path in Banach space, treats rough differential equation as a
special case of rough integral equation, and proves that the solution exists
uniquely and is robust. Gubinelli \cite{Gubinelli04,Gubinelli10} and Davie 
\cite{AMDavie} define a continuous path to be a solution if its increment on
small intervals is close to high order Euler expansion. Gubinelli introduces
the notions of controlled rough path \cite{Gubinelli04} and branched rough
path \cite{Gubinelli10}, and he can solve differential equation driven by
general (geometric \&\ non-geometric) rough paths. Davie works with general
rough paths when $p<3$, and gives sharp conditions on vector field for the
existence and uniqueness of solution with some impressive examples
delimiting the sharpness. Inspired by Davie's work, Friz and Victoir \cite%
{Friz Victoir} define solution of rough differential equation (RDE) as the
limit of solutions of ordinary differential equations (ODE), and extend the
formulation to general $p\geq 1$ by using geodesic approximation. There are
also alternative approaches by Feyel \& La Pradelle \cite{FeyelPradelle} and
Hu \& Nualart \cite{HuNualart}. For more systematic treatment of the theory,
see Lyons \& Qian \cite{LyonsQian}, Lejay \cite{Lejay}, Lyons, Caruana \& L%
\'{e}vy \cite{Lyonsnotes}, Friz \&\ Victoir \cite{FrizVictoirbook} and Friz
\& Hairer \cite{FrizHairer}.

For rough path theory, there is a considerable gap between lower $p$ and
arbitrary large $p$, and between finite dimensional space and infinite
dimensional space. (So far, only Lyons' approach can deal with rough paths
in infinite dimensional space with arbitrary roughness.) In comparison with
the case when signals are moderate oscillatory, as the roughness of system
increases, there should be some algebraic structure coming in to streamline
the otherwise complicated (if at all possible) calculation. This leads to a
more complete theory and provides an unified resolution. For finite
dimensional space, one can use Arzel\`{a}-Ascoli theorem to prove that the
solution exists when the vector field is $Lip\left( \gamma \right) $ for $%
\gamma >p-1$, but solution may not exist in general Banach space when $%
\gamma \in \left( p-1,p\right) $. Indeed, according to Shkarin \cite{Shkarin}%
, for a large family of Banach spaces (including $L_{p}\left[ 0,1\right] $, $%
1\leq p<\infty $, and $C\left[ 0,1\right] $) and for any $\alpha \in \left(
0,1\right) $, there exists an $\alpha $-H\"{o}lder continuous $f$ such that,
the ordinary differential equation $\dot{x}=f\left( x\right) $ has no
solution in any interval of the real line. In finite dimensional space, one
can use Jacobi matrix to prove continuity of solutions in initial value and
use Whitney's theorem to treat locally Lipschitz vector fields, but Jacobi
matrix and Whitney's theorem are not available in Banach space. Moreover, in
finite dimensional space, the set of signatures of continuous bounded
variation paths explore the truncated Lie group easily, and every weak
geometric $p$-rough path is a geometric $p^{\prime }$-rough path for any $%
p^{\prime }>p$. While for general Banach space, Rashevskii-Chow Theorem \cite%
{Rashevskii,Chow} no longer holds, and there is a part of the algebraic
structure which can not be reached by continuous bounded variation paths.

We work with RDE in Banach space driven by weak geometric rough paths, and
we use an explicit ODE to approximate the truncated exponential solution
flow. Chen \cite{Chen} prove that the logarithm of the signature of a
continuous bounded variation path is a Lie series. Castell and Gaines \cite%
{CastellGaines} use an ODE, whose vector field is a Lie polynomial, to
approximate the truncated exponential solution flow for stochastic
differential equations. Boutaib et al \cite{Youness} use similar ODE to
approximate the (first level) RDE solution in Banach space. We modify the
ODE in \cite{Youness} and use its solution to recover the truncated solution
flow on small intervals. The method of our analysis is based on Davie \cite%
{AMDavie} and Friz \& Victoir \cite{FrizVictoir,FrizVictoirbook}---basically
by comparing the increment of RDE solution on an interval with the solution
of an ODE and building up mathematical induction on the length of the
interval. Another independent work in this direction is Bailleul \cite%
{Bailleul1,Bailleul2}.

We prove that the solution of rough differential equation driven by weak
geometric $p$-rough path exists uniquely when the vector field is $Lip\left(
\gamma \right) $ for $\gamma >p$. Since being a weak geometric rough path is
easier to check (as the authors assumed) than being a geometric rough path,
this moderate extension of Lyons' original theorem could provide certain
convenience when one works in Banach space. As a consequence of our theorem,
the solution of rough differential equation in the sense of Lyons,
Gubinelli, Davie and Friz \& Victoir coincide where their settings overlap.
Moreover, for fixed time interval $\left[ s,t\right] $, there exists an
explicit ordinary differential equation on $\left[ 0,1\right] $ whose
solution at time $1$ is close to the increment on $\left[ s,t\right] $ of
solution to rough differential equation. Since the ordinary differential
equation is explicit, one could use it to simulate the solution of rough
differential equation (as Castell \& Gaines \cite{CastellGaines} did for
stochastic differential equation). The solution of the ordinary differential
equation, as we prove, takes value in nilpotent Lie group, and the error
(between ODE solution and RDE\ solution) is dimension-free and of the same
order as the error of high order Euler expansion. Finally, we prove the
quantitative version of universal limit theorem \cite{Terry98}, and give the
explicit dependence of solution in term of initial value, vector field and
driving rough path, extending Friz and Victoir's continuity result in \cite%
{FrizVictoirbook} to Banach space.

\section{Definitions and Notations}

\subsection{Algebraic Structure}

Let $\mathcal{V}$ be a Banach space. Based on Def 1.25 in \cite{Lyonsnotes},
we define admissible norm on tensor products.

\begin{definition}[admissible norm]
We say that the tensor product of $\mathcal{V}$ is endowed with an
admissible norm, if the following two conditions are satisfied:

1, For integer $n\geq 1$, the symmetric group $S_{n}$ acts by isometries on $%
\mathcal{V}^{\otimes n}$, that is%
\begin{equation}
\left\Vert \sigma v\right\Vert =\left\Vert v\right\Vert \text{, }\forall
\sigma \in S_{n}\text{, }\forall v\in \mathcal{V}^{\otimes n}\text{.}
\label{tensor norm is symmetric}
\end{equation}

2, The tensor product has norm $1$, that is,%
\begin{equation*}
\left\Vert u\otimes v\right\Vert \leq \left\Vert u\right\Vert \left\Vert
v\right\Vert \text{, }\forall u\in \mathcal{V}^{\otimes n}\text{, }\forall
v\in \mathcal{V}^{\otimes m}\text{, }\forall m,n\geq 1\text{.}
\end{equation*}
\end{definition}

For example, injective and projective tensor norms are admissible norms, see 
\cite{Ryan}.

\begin{definition}[{$\mathcal{V}^{\otimes n}$ and $\left[ \mathcal{V}\right]
^{n}$}]
\label{Definition of Vtensorn and Vbracketn}When $n=1$, $\mathcal{V}%
^{\otimes 1}:=\mathcal{V}$ and $\left[ \mathcal{V}\right] ^{1}:=\mathcal{V}$%
. For $n\geq 2$, we select an admissible norm and define $\mathcal{V}%
^{\otimes n}$ and $\left[ \mathcal{V}\right] ^{n}$ respectively as the
closure of (with $\left[ u,v\right] :=u\otimes v-v\otimes u$)%
\begin{gather*}
\left\{ \left. \sum_{k=1}^{m}v_{1}^{k}\otimes \cdots \otimes
v_{n-1}^{k}\otimes v_{n}^{k}\right\vert \left\{ v_{i}^{k}\right\} \subset 
\mathcal{V}\text{, }m\geq 1\right\} \text{,} \\
\left\{ \left. \sum_{k=1}^{m}\left[ v_{1}^{k},\cdots \left[
v_{n-1}^{k},v_{n}^{k}\right] \right] \right\vert \left\{ v_{i}^{k}\right\}
\subset \mathcal{V}\text{, }m\geq 1\right\} \text{,}
\end{gather*}%
w.r.t. the norm selected.
\end{definition}

\begin{notation}
For integers $n\geq k\geq 1$, $\pi _{k}$ denotes the projection of $%
\mathbb{R}
\oplus \mathcal{V}\oplus \cdots \oplus \mathcal{V}^{\otimes n}$ to $\mathcal{%
V}^{\otimes k}$.
\end{notation}

\begin{definition}[$L^{n}\left( \mathcal{V}\right) $]
\label{Definition of Ln(V)}For integer $n\geq 1$, $L^{n}\left( \mathcal{V}%
\right) $ denotes the Banach space 
\begin{equation*}
L^{n}\left( \mathcal{V}\right) :=%
\mathbb{R}
\oplus \mathcal{V}\oplus \cdots \oplus \mathcal{V}^{\otimes n}\text{,}
\end{equation*}%
equipped with the norm%
\begin{equation*}
\left\Vert l\right\Vert :=\sum_{k=0}^{n}\left\Vert \pi _{k}\left( l\right)
\right\Vert \text{, }\forall l\in L^{n}\left( \mathcal{V}\right) \text{.}
\end{equation*}
\end{definition}

\begin{definition}[$T^{n}\left( \mathcal{V}\right) $]
\label{Definition of Tn(V)}For integer $n\geq 1$, define%
\begin{equation*}
T^{n}\left( \mathcal{V}\right) :=1\oplus \mathcal{V}\oplus \cdots \oplus 
\mathcal{V}^{\otimes n}\text{.}
\end{equation*}%
For $g,h\in T^{n}\left( \mathcal{V}\right) $, define $g\otimes h$ and $%
g^{-1} $ by%
\begin{equation*}
g\otimes h:=\sum_{k=0}^{n}\sum_{j=0}^{k}\pi _{j}\left( g\right) \otimes \pi
_{k-j}\left( h\right) \text{, }g^{-1}:=1+\sum_{k=1}^{n}\pi _{k}\left(
\sum_{j=1}^{n}\left( -1\right) ^{j}\left( g-1\right) ^{\otimes j}\right) 
\text{,}
\end{equation*}%
and equip $T^{n}\left( \mathcal{V}\right) $ with $\left\vert \!\left\vert
\!\left\vert \cdot \right\vert \!\right\vert \!\right\vert $\ defined by%
\begin{equation}
\left\vert \!\left\vert \!\left\vert t\right\vert \!\right\vert
\!\right\vert :=\sum_{k=1}^{n}\left\Vert \pi _{k}\left( t\right) \right\Vert
^{\frac{1}{k}}\text{, }\forall t\in T^{n}\left( \mathcal{V}\right) \text{.}
\label{Definition of norm on group}
\end{equation}%
Then $T^{n}\left( \mathcal{V}\right) $ is a nilpotent topological group.
\end{definition}

\begin{definition}
\label{Definition dilation operator}For $\lambda >0$ and integer $n\geq 1$,
define the dilation operator $\delta _{\lambda }:T^{n}\left( \mathcal{V}%
\right) \rightarrow T^{n}\left( \mathcal{V}\right) $ by 
\begin{equation*}
\delta _{\lambda }g:=\sum_{k=0}^{n}\lambda ^{k}\pi _{k}\left( g\right) \text{%
, }\forall g\in 1\oplus \mathcal{V}\oplus \cdots \oplus \mathcal{V}^{\otimes
n}\text{.}
\end{equation*}
\end{definition}

$T^{n}\left( \mathcal{V}\right) $ is nilpotent because $\left[ t^{n},\cdots %
\left[ t^{2},t^{1}\right] \right] =0$, $\forall \left\{ t^{i}\right\}
_{i=1}^{n}\subset T^{n}\left( \mathcal{V}\right) $. $\left\vert \!\left\vert
\!\left\vert \cdot \right\vert \!\right\vert \!\right\vert $ defined at $%
\left( \ref{Definition of norm on group}\right) $ is homogeneous w.r.t.
dilation, but is not a norm because it is not sub-additive. While $%
\left\vert \!\left\vert \!\left\vert \cdot \right\vert \!\right\vert
\!\right\vert $ is equivalent to a norm up to a constant depending on $n$
(see Exercise 7.38 \cite{FrizVictoirbook} where the equivalency extends
naturally to Banach spaces).

\begin{definition}
Define $\exp :\mathcal{V}\oplus \cdots \oplus \mathcal{V}^{\otimes
n}\rightarrow 1\oplus \mathcal{V}\oplus \cdots \oplus \mathcal{V}^{\otimes
n} $ by%
\begin{equation*}
\exp \left( a\right) :=1+\sum_{k=1}^{n}\pi _{k}\left( \sum_{j=1}^{n}\frac{%
a^{\otimes j}}{j!}\right) \text{, }\forall a\in \mathcal{V}\oplus \cdots
\oplus \mathcal{V}^{\otimes n}\text{.}
\end{equation*}%
Define $\log :1\oplus \mathcal{V}\oplus \cdots \oplus \mathcal{V}^{\otimes
n}\rightarrow \mathcal{V}\oplus \cdots \oplus \mathcal{V}^{\otimes n}$ by%
\begin{equation}
\log \left( g\right) :=\sum_{k=1}^{n}\pi _{k}\left( \sum_{j=1}^{n}\frac{%
\left( -1\right) ^{j+1}}{j}\left( g-1\right) ^{\otimes j}\right) \text{, }%
\forall g\in 1\oplus \mathcal{V}\oplus \cdots \oplus \mathcal{V}^{\otimes n}%
\text{.}  \label{definition of logarithm}
\end{equation}
\end{definition}

Then it can be checked that $\log \left( \exp \left( t-1\right) \right) =t-1$
and $\exp \left( \log \left( t\right) \right) =t$, $\forall t\in T^{n}\left( 
\mathcal{V}\right) $.

\begin{definition}[$G^{n}\left( \mathcal{V}\right) $]
For integer $n\geq 1$, (with $\left[ \mathcal{V}\right] ^{k}$ in Definition %
\ref{Definition of Vtensorn and Vbracketn}) we define 
\begin{equation*}
G^{n}\left( \mathcal{V}\right) :=\left\{ \exp \left( a\right) |a\in \left[ 
\mathcal{V}\right] ^{1}\oplus \left[ \mathcal{V}\right] ^{2}\oplus \cdots
\oplus \left[ \mathcal{V}\right] ^{n}\right\} \text{.}
\end{equation*}%
Then $G^{n}\left( \mathcal{V}\right) $ is a subgroup of $T^{n}\left( 
\mathcal{V}\right) $ (based on Baker--Campbell--Hausdorff formula), called
the step-$n$ nilpotent Lie group of degree $n$.
\end{definition}

For more about nilpotent Lie group, please refer to e.g. \cite{ReutenauerC}.

\subsection{Vector Field and Differential Operator}

Let $\mathcal{U}$, $\mathcal{V}$ and $\mathcal{W}$ be Banach spaces.

\begin{definition}
\label{Definition Cgamma}For $\gamma >0$, we say $r:\mathcal{V}\rightarrow 
\mathcal{U}$ is $Lip\left( \gamma \right) $ and denote $r\in C^{\gamma
}\left( \mathcal{V},\mathcal{U}\right) $, if $r$ is $\lfloor \gamma \rfloor $%
-times Fr\'{e}chet differentiable ($\lfloor \gamma \rfloor $ denotes the
largest integer which is strictly less than $\gamma $), and%
\begin{equation*}
\left\vert r\right\vert _{\gamma }:=\left( \max_{k=0,1,\dots ,\lfloor \gamma
\rfloor }\left\Vert D^{k}r\right\Vert _{\infty }\right) \vee \left\Vert
D^{\lfloor \gamma \rfloor }r\right\Vert _{\left( \gamma -\lfloor \gamma
\rfloor \right) -H\ddot{o}l}<\infty \text{,}
\end{equation*}%
where $\left\Vert \cdot \right\Vert _{\infty }$ denotes the uniform norm and 
$\left\Vert \cdot \right\Vert _{\left( \gamma -\lfloor \gamma \rfloor
\right) -H\ddot{o}l}$ denotes the $\left( \gamma -\lfloor \gamma \rfloor
\right) $-H\"{o}lder norm. \noindent

\noindent Denote by \noindent $C^{0}\left( \mathcal{V},\mathcal{U}\right) $
the space of bounded measurable mappings from $\mathcal{V}$ to $\mathcal{U}$.

\noindent Denote by $C^{\gamma ,loc}\left( \mathcal{V},\mathcal{U}\right) $
the space of locally $Lip\left( \gamma \right) $ mappings.
\end{definition}

\begin{definition}
\label{Definition LV Cgamma}$L\left( \mathcal{W},C^{\gamma }\left( \mathcal{V%
},\mathcal{U}\right) \right) $ denotes the space of linear mappings from $%
\mathcal{W}$ to $C^{\gamma }\left( \mathcal{V},\mathcal{U}\right) $. Define%
\begin{equation*}
\left\vert f\right\vert _{\gamma }:=\sup_{w\in \mathcal{W},\left\Vert
w\right\Vert =1}\left\vert f\left( w\right) \right\vert _{\gamma }\text{, }%
\forall f\in L\left( \mathcal{W},C^{\gamma }\left( \mathcal{V},\mathcal{U}%
\right) \right) \text{.}
\end{equation*}

\noindent Similarly, $L\left( \mathcal{W},C^{\gamma ,loc}\left( \mathcal{V},%
\mathcal{U}\right) \right) $ denotes the space of linear mappings from $%
\mathcal{W}$ to $C^{\gamma ,loc}\left( \mathcal{V},\mathcal{U}\right) $.
\end{definition}

For $r\in C^{k,loc}\left( \mathcal{U},\mathcal{U}\right) $ and $j=0,\dots ,k$%
, $D^{j}r\in L\left( \mathcal{U}^{\otimes j},C^{k-j,loc}\left( \mathcal{U},%
\mathcal{U}\right) \right) $.

\begin{notation}[$\mathcal{D}^{k}\left( \mathcal{U}\right) $]
For integer $k\geq 0$, denote by $\mathcal{D}^{k}\left( \mathcal{U}\right) $
the set of locally bounded $k$th order differential operators (on $%
C^{k,loc}\left( \mathcal{U},\mathcal{U}\right) $). More specifically, $p\in 
\mathcal{D}^{k}\left( \mathcal{U}\right) $ if and only if $p:C^{k,loc}\left( 
\mathcal{U},\mathcal{U}\right) \rightarrow C^{0,loc}\left( \mathcal{U},%
\mathcal{U}\right) $ and there exist locally bounded $p_{j}\in
C^{0,loc}\left( \mathcal{U},\mathcal{U}^{\otimes j}\right) $, $j=0,1,\dots
,k $, with $p_{k}\not\equiv 0$, such that%
\begin{equation*}
p\left( r\right) \left( u\right) =\sum_{j=0}^{k}\left( D^{j}r\right) \left(
p_{j}\left( u\right) \right) \left( u\right) \text{, }\forall u\in \mathcal{U%
}\text{, }\forall r\in C^{k,loc}\left( \mathcal{U},\mathcal{U}\right) \text{.%
}
\end{equation*}%
We define the norm $\left\vert \cdot \right\vert _{k}$ on $\mathcal{D}%
^{k}\left( \mathcal{U}\right) $ by%
\begin{equation*}
\left\vert p\right\vert _{k}:=\max_{j=0,1,\dots ,k}\sum_{n=1}^{\infty }\frac{%
\left( \sup_{\left\Vert u\right\Vert \leq n}\left\Vert p_{j}\left( u\right)
\right\Vert \right) \wedge 1}{2^{n}}\text{, }\forall p\in \mathcal{D}%
^{k}\left( \mathcal{U}\right) \text{.}
\end{equation*}%
Then $\mathcal{D}^{k}\left( \mathcal{U}\right) $ is a Banach space (with the
natural addition and scalar multiplication).
\end{notation}

\begin{definition}[composition]
\label{Definition differential operator f}Let $p^{1}\in \mathcal{D}%
^{j_{1}}\left( \mathcal{U}\right) $ and $p^{2}\in \mathcal{D}^{j_{2}}\left( 
\mathcal{U}\right) $ for integers $j_{1}\geq 0$, $j_{2}\geq 0$. When the
components of $p^{2}$ are locally $Lip\left( j_{1}\right) $, we define the
composition of $p^{1}$ and $p^{2}$, $p^{1}\circ p^{2}\in \mathcal{D}%
^{j_{1}+j_{2}}\left( \mathcal{U}\right) $, by%
\begin{equation*}
\left( p^{1}\circ p^{2}\right) \left( r\right) :=p^{1}\left( p^{2}\left(
r\right) \right) \text{, }\forall r\in C^{j_{1}+j_{2},loc}\left( \mathcal{U},%
\mathcal{U}\right) \text{.}
\end{equation*}%
For $p\in \mathcal{D}^{j}\left( \mathcal{U}\right) $, $j\geq 0$, when the
components of $p$ are locally $Lip\left( \left( k-1\right) \times j\right) $
for integer $k\geq 1$, we define the differential operator $p^{\circ k}\in 
\mathcal{D}^{k\times j}\left( \mathcal{U}\right) $ by 
\begin{equation}
p^{\circ 1}:=p\text{ \ and \ }p^{\circ k}:=p\circ p^{\circ \left( k-1\right)
}\text{, }k\geq 2\text{.}  \label{Notation of composition of f with itself}
\end{equation}
\end{definition}

Compositions of differential operators are associative, i.e. $\left(
p^{1}\circ p^{2}\right) \circ p^{3}=p^{1}\circ \left( p^{2}\circ
p^{3}\right) $.

\begin{definition}[$F^{\circ k}$]
\label{Definition of f circ k}Suppose $F\in L\left( \mathcal{V},C^{\gamma
,loc}\left( \mathcal{U},\mathcal{U}\right) \right) $ for some $\gamma \geq 0$%
. Then for any $v\in \mathcal{V}$, we define $F^{\circ 1}\left( v\right) \in 
\mathcal{D}^{1}\left( \mathcal{U}\right) $ by%
\begin{equation}
F^{\circ 1}\left( v\right) \left( r\right) \left( u\right) :=\left(
Dr\right) \left( F\left( v\right) \left( u\right) \right) \left( u\right) 
\text{, }\forall u\in \mathcal{U}\text{, }\forall r\in C^{1,loc}\left( 
\mathcal{U},\mathcal{U}\right) \text{.}  \label{definition of Fcirc1}
\end{equation}%
For integer $k\in 1,2,\dots ,\lfloor \gamma \rfloor +1$ and $\left\{
v_{j}\right\} _{j=1}^{k}\subset \mathcal{V}$, we define $F^{\circ k}\left(
v_{1}\otimes \cdots \otimes v_{k}\right) \in \mathcal{D}^{k}\left( \mathcal{U%
}\right) $ by 
\begin{equation}
F^{\circ k}\left( v_{1}\otimes \cdots \otimes v_{k}\right) :=\left( F^{\circ
1}\left( v_{1}\right) \right) \circ \left( F^{\circ 1}\left( v_{2}\right)
\right) \circ \cdots \circ \left( F^{\circ 1}\left( v_{k}\right) \right) .
\label{definition of fcirc k on elements}
\end{equation}%
Then we denote by $F^{\circ k}\in L\left( \mathcal{V}^{\otimes k},\mathcal{D}%
^{k}\left( \mathcal{U}\right) \right) $ the unique continuous linear
operator which satisfies $\left( \ref{definition of fcirc k on elements}%
\right) $.
\end{definition}

\subsection{Rough Differential Equation}

\label{Section of RDE}Recall $\left\vert \!\left\vert \!\left\vert \cdot
\right\vert \!\right\vert \!\right\vert :=\sum_{k=1}^{\left[ p\right]
}\left\Vert \pi _{k}\left( \cdot \right) \right\Vert ^{\frac{1}{k}}$ defined
at $\left( \ref{Definition of norm on group}\right) $. $\left[ p\right] $
denotes the largest integer which is less or equal to $p$.

\begin{definition}
For $p\geq 1$, suppose $X:\left[ 0,T\right] \rightarrow G^{\left[ p\right]
}\left( \mathcal{V}\right) $ is continuous. Define $p$-variation of $X$ by ($%
X_{s,t}:=X_{s}^{-1}\otimes X_{t}$) 
\begin{equation*}
\left\Vert X\right\Vert _{p-var,\left[ 0,T\right] }:=\sup_{D\subset \left[
0,T\right] }\left( \tsum\nolimits_{j,t_{j}\in D}\left\vert \!\left\vert
\!\left\vert X_{t_{j},t_{j+1}}\right\vert \!\right\vert \!\right\vert
^{p}\right) ^{\frac{1}{p}}\text{,}
\end{equation*}%
where the supremum is taken over all finite partitions $D=\left\{
t_{j}\right\} _{j=0}^{n}$, $0=t_{0}<t_{1}<\cdots <t_{n}=T$. Let $%
C^{p-var}\left( \left[ 0,T\right] ,G^{\left[ p\right] }\left( \mathcal{V}%
\right) \right) $ denote the set of continuous paths with finite $p$%
-variation.
\end{definition}

\begin{definition}[weak geometric rough path]
For $p\geq 1$, $X:\left[ 0,T\right] \rightarrow G^{\left[ p\right] }\left( 
\mathcal{V}\right) $ is called a weak geometric $p$-rough path if $X\in
C^{p-var}\left( \left[ 0,T\right] ,G^{\left[ p\right] }\left( \mathcal{V}%
\right) \right) $.
\end{definition}

Recall Banach space $L^{n}\left( \mathcal{U}\right) =%
\mathbb{R}
\oplus \mathcal{U}\oplus \cdots \oplus \mathcal{U}^{\otimes n}$ defined in
Definition \ref{Definition of Ln(V)}. For $l\in L^{n}\left( \mathcal{U}%
\right) $ and $u\in \mathcal{U}$, $l\otimes u\in L^{n}\left( \mathcal{U}%
\right) $ is defined by $\pi _{0}\left( l\otimes u\right) =0$ and $\pi
_{k}\left( l\otimes u\right) =\pi _{k-1}\left( l\right) \otimes u$, $%
k=1,2,\dots ,n$.

\begin{notation}
For $\gamma \geq 0$, suppose $f\in L\left( \mathcal{V},C^{\gamma }\left( 
\mathcal{U},\mathcal{U}\right) \right) $ and $\eta \in \mathcal{U}$. Denote $%
f\left( \cdot +\eta \right) \in L\left( \mathcal{V},C^{\gamma }\left( 
\mathcal{U},\mathcal{U}\right) \right) $ by%
\begin{equation}
f\left( \cdot +\eta \right) \left( v\right) \left( u\right) :=f\left(
v\right) \left( u+\eta \right) \text{, }\forall v\in \mathcal{V}\text{, }%
\forall u\in \mathcal{U}\text{.}  \label{Notation of f(cdot+xi)}
\end{equation}%
Denote $F\left( f\right) \in L\left( \mathcal{V},C^{\gamma ,loc}\left(
L^{n}\left( \mathcal{U}\right) ,L^{n}\left( \mathcal{U}\right) \right)
\right) $ by 
\begin{equation}
F\left( f\right) \left( v\right) \left( l\right) :=l\otimes \left( f\left(
v\right) \left( \pi _{1}\left( l\right) \right) \right) \text{, }\forall
v\in \mathcal{V}\text{, }\forall l\in L^{n}\left( \mathcal{U}\right) \text{.}
\label{Notation of F(f)}
\end{equation}
\end{notation}

Gubinelli \cite{Gubinelli04,Gubinelli10} and Davie \cite{AMDavie} define a
continuous path $Y$ to be a solution, if the increment of $Y$ on small
interval is comparable to high order Euler expansion. (Gubinelli's
formulation is more algebraic, but his solution could be stated in this way.)

For $f\in L\left( \mathcal{V},C^{\gamma }\left( \mathcal{U},\mathcal{U}%
\right) \right) $ and $\eta \in \mathcal{U}$, denote $f\left( \cdot +\eta
\right) \in L\left( \mathcal{V},C^{\gamma }\left( \mathcal{U},\mathcal{U}%
\right) \right) $ as at $\left( \ref{Notation of f(cdot+xi)}\right) $,
denote $F\left( f\left( \cdot +\eta \right) \right) \in L\left( \mathcal{V}%
,C^{\gamma ,loc}\left( L^{n}\left( \mathcal{U}\right) ,L^{n}\left( \mathcal{U%
}\right) \right) \right) $ as at $\left( \ref{Notation of F(f)}\right) $ and
define $F\left( f\left( \cdot +\eta \right) \right) ^{\circ k}\in L\left( 
\mathcal{V}^{\otimes k},\mathcal{D}^{k}\left( L^{n}\left( \mathcal{U}\right)
\right) \right) $ as in Definition \ref{Definition of f circ k}. Denote $%
Id_{L^{n}\left( \mathcal{U}\right) }$ the identity function on $L^{n}\left( 
\mathcal{U}\right) $, i.e. $Id_{L^{n}\left( \mathcal{U}\right) }\left(
l\right) =l$, $\forall l\in L^{n}\left( \mathcal{U}\right) $.

\begin{definition}[Gubinelli/Davie]
\label{Definition of solution of RDE}For $\gamma >p\geq 1$, suppose $X\in
C^{p-var}\left( \left[ 0,T\right] ,G^{\left[ p\right] }\left( \mathcal{V}%
\right) \right) $, $f\in L\left( \mathcal{V},C^{\gamma }\left( \mathcal{U},%
\mathcal{U}\right) \right) $ and $\xi \in G^{\left[ p\right] }\left( 
\mathcal{U}\right) $. Then $Y:\left[ 0,T\right] \rightarrow T^{\left[ p%
\right] }\left( \mathcal{U}\right) $ is said to be a solution of the rough
differential equation%
\begin{equation*}
dY=f\left( Y\right) dX\text{, \ }Y_{0}=\xi \text{,}
\end{equation*}%
if there exists a function $\theta :\left\{ 0\leq s\leq t\leq T\right\}
\rightarrow \overline{%
\mathbb{R}
^{+}}$ satisfying%
\begin{equation*}
\lim_{D\subset \left[ 0,T\right] ,\left\vert D\right\vert \rightarrow
0}\sum_{j,t_{j}\in D}\theta \left( t_{j},t_{j+1}\right) =0\text{,}
\end{equation*}%
such that, for all sufficiently small $\left[ s,t\right] \subseteq \left[ 0,T%
\right] $, (with $Y_{s,t}:=Y_{s}^{-1}\otimes Y_{t}$ and $1\in L^{\left[ p%
\right] }\left( \mathcal{U}\right) $)%
\begin{equation*}
\left\Vert Y_{s,t}-\tsum\nolimits_{k=1}^{\left[ p\right] }F\left( f\left(
\cdot +\pi _{1}\left( Y_{s}\right) \right) \right) ^{\circ k}\pi _{k}\left(
X_{s,t}\right) \left( Id_{L^{\left[ p\right] }\left( \mathcal{U}\right)
}\right) \left( 1\right) \right\Vert \leq \theta \left( s,t\right) \text{.}
\end{equation*}
\end{definition}

\noindent As will be apparent in the proofs, the shuffle product (used in 
\cite{Terry98}) is hidden in $F\left( f\left( \cdot +\pi _{1}\left(
Y_{s}\right) \right) \right) ^{\circ k}\pi _{k}\left( X_{s,t}\right) $.

\section{Main Result}

\begin{definition}
$\omega :\left\{ 0\leq s\leq t\leq T\right\} \rightarrow \overline{%
\mathbb{R}
^{+}}$ is called a control, if $\omega $ is continuous, vanishing on the
diagonal, and is sub-additive i.e. 
\begin{equation*}
\omega \left( s,u\right) +\omega \left( u,t\right) \leq \omega \left(
s,t\right) ,\forall 0\leq s\leq u\leq t\leq T\text{.}
\end{equation*}
\end{definition}

\begin{theorem}
\label{Theorem existence and uniqueness}For $\gamma >p\geq 1$, suppose $X\in
C^{p-var}\left( \left[ 0,T\right] ,G^{\left[ p\right] }\left( \mathcal{V}%
\right) \right) $, $f\in L\left( \mathcal{V},C^{\gamma }\left( \mathcal{U},%
\mathcal{U}\right) \right) $ and $\xi \in G^{\left[ p\right] }\left( 
\mathcal{U}\right) $. Then the rough differential equation%
\begin{equation}
dY=f\left( Y\right) dX\text{, }Y_{0}=\xi \text{,}  \label{RDE}
\end{equation}%
has a unique solution (denoted as $Y$) in the sense of Definition \ref%
{Definition of solution of RDE}, which is a continuous path taking values in 
$G^{\left[ p\right] }\left( \mathcal{U}\right) $. If define control $\omega $
by%
\begin{equation*}
\omega \left( s,t\right) :=\left\vert f\right\vert _{\gamma }^{p}\left\Vert
X\right\Vert _{p-var,\left[ s,t\right] }^{p}\text{, }\forall 0\leq s\leq
t\leq T\text{,}
\end{equation*}%
then there exists a constant $C_{p}$ such that, for any $0\leq s\leq t\leq T$%
,%
\begin{equation}
\left\Vert Y\right\Vert _{p-var,\left[ s,t\right] }\leq C_{p}\left( \omega
\left( s,t\right) ^{\frac{1}{p}}\vee \omega \left( s,t\right) \right) \text{.%
}  \label{bound on RDE solution in main theorem}
\end{equation}%
Moreover, for $0\leq s\leq t\leq T$, if let $y^{s,t}:\left[ 0,1\right]
\rightarrow L^{\left[ p\right] }\left( \mathcal{U}\right) $ denote the
solution of the ordinary differential equation%
\begin{equation}
dy_{u}^{s,t}=\tsum\nolimits_{k=1}^{\left[ p\right] }F\left( f\left( \cdot
+\pi _{1}\left( Y_{s}\right) \right) \right) ^{\circ k}\pi _{k}\left( \log
X_{s,t}\right) \left( Id_{L^{\left[ p\right] }\left( \mathcal{U}\right)
}\right) \left( y_{u}^{s,t}\right) du\text{, }u\in \left[ 0,1\right] \text{, 
}y_{0}^{s,t}=1\text{,}  \label{definition of ys,t in main theorem}
\end{equation}%
then $y^{s,t}$ takes value in $G^{\left[ p\right] }\left( \mathcal{U}\right) 
$, and there exists a constant $C_{p}$, such that, ($Y_{s,t}:=Y_{s}^{-1}%
\otimes Y_{t}$)%
\begin{gather}
\left\Vert Y_{s,t}-y_{1}^{s,t}\right\Vert \leq C_{p}\left( \omega \left(
s,t\right) ^{\frac{\left[ p\right] +1}{p}}\vee \omega \left( s,t\right) ^{%
\left[ p\right] }\right) \text{,}
\label{estimate of difference in main theorem} \\
\left\Vert Y_{s,t}-\tsum\nolimits_{k=1}^{\left[ p\right] }F\left( f\left(
\cdot +\pi _{1}\left( Y_{s}\right) \right) \right) ^{\circ k}\pi _{k}\left(
X_{s,t}\right) \left( Id_{L^{\left[ p\right] }\left( \mathcal{U}\right)
}\right) \left( 1\right) \right\Vert \leq C_{p}\left( \omega \left(
s,t\right) ^{\frac{\left[ p\right] +1}{p}}\vee \omega \left( s,t\right) ^{%
\left[ p\right] }\right) \text{.}  \notag
\end{gather}
\end{theorem}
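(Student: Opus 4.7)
The plan is to follow the Davie--Friz--Victoir philosophy: compare the increment of any RDE solution on a small interval $[s,t]$ with the group element $y_1^{s,t}$ produced by the auxiliary ODE (the ``exponential'' step), then globalize by Lyons' almost-multiplicative functional / sewing argument. The ODE serves the same purpose as the geodesic approximation of Friz--Victoir, but because it is built directly from $\log X_{s,t}$ via the differential-operator composition $F^{\circ k}$, its time-$1$ flow lies in $G^{[p]}(\mathcal{U})$ automatically, sidestepping the failure of Rashevskii--Chow in Banach space that was flagged in the introduction.

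\textbf{Step 1: local ODE.} First I would fix $[s,t]$ and show that the ODE $dy^{s,t}=V(y^{s,t})\,du$ with $V:=\sum_{k=1}^{[p]} F(f(\cdot+\pi_1(Y_s)))^{\circ k}\pi_k(\log X_{s,t})(Id)$ admits a unique solution on $[0,1]$; this is a standard Picard argument since $f\in C^\gamma$ with $\gamma>p\ge[p]$ supplies the iterated Fr\'echet derivatives needed for $V$ to be locally Lipschitz in $l\in L^{[p]}(\mathcal{U})$. To establish $y_1^{s,t}\in G^{[p]}(\mathcal{U})$, observe that $X_{s,t}\in G^{[p]}(\mathcal{V})$ forces $\pi_k(\log X_{s,t})\in [\mathcal{V}]^k$, i.e.\ each projection is a Lie polynomial in $\mathcal{V}$. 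The assignment $v\mapsto F(f(\cdot+\pi_1(Y_s)))^{\circ 1}(v)(Id)$ is a linear map from $\mathcal{V}$ into vector fields on $L^{[p]}(\mathcal{U})$, and Definition~\ref{Definition of f circ k} implements the associative composition that, when restricted to Lie elements, produces iterated Lie brackets of these vector fields. Thus $V$ is a Lie polynomial in vector fields tangent to $G^{[p]}(\mathcal{U})$, its flow preserves that subgroup, and starting at $1$ the flow stays there.

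\textbf{Step 2: Taylor expansion to Euler.} I would Taylor-expand $u\mapsto y_u^{s,t}$ at $u=0$ up to order $[p]$. The $j$-th derivative at $0$ is the iterated action $V^{\circ j}(Id)(1)$, a sum of homogeneous terms in $\pi_{k_1}(\log X_{s,t})\otimes\cdots\otimes \pi_{k_j}(\log X_{s,t})$. Grouping by total grading $k_1+\cdots+k_j$ and using the identity $\exp(\log X_{s,t})=X_{s,t}$ in $T^{[p]}(\mathcal{V})$ together with the representation-theoretic fact that $F^{\circ\bullet}$ factors through the universal enveloping (shuffle) algebra, the contributions of grading $\leq[p]$ collapse onto the Euler expansion $\sum_{k=1}^{[p]} F(f(\cdot+\pi_1(Y_s)))^{\circ k}\pi_k(X_{s,t})(Id)(1)$. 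The remaining terms of grading $\geq[p]+1$ are bounded by $C_p\,\omega(s,t)^{([p]+1)/p}$ using $\|\pi_k(\log X_{s,t})\|\lesssim \omega(s,t)^{k/p}$ and the $Lip(\gamma)$ bounds on $f$, yielding $\|y_1^{s,t}-\sum_k F^{\circ k}\pi_k(X_{s,t})(Id)(1)\|\lesssim \omega(s,t)^{([p]+1)/p}$.

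\textbf{Step 3: sewing, and Step 4: uniqueness.} Applying the previous step on $[s,t]$ and on the concatenation of $[s,u]$ and $[u,t]$, and combining with Chen's identity $X_{s,t}=X_{s,u}\otimes X_{u,t}$ and the $Lip(\gamma)$ continuity of $f$, yields the almost-multiplicative bound $\|y_1^{s,t}-y_1^{s,u}\otimes y_1^{u,t}\|\lesssim \omega(s,t)^{([p]+1)/p}$ on sufficiently small intervals. Lyons' almost-multiplicative functional theorem (valid in Banach space) then produces a unique multiplicative $Y_{s,t}\in G^{[p]}(\mathcal{U})$ (the group is closed under tensor product and limits, so survives the sewing) with $\|Y_{s,t}-y_1^{s,t}\|\lesssim \omega(s,t)^{([p]+1)/p}$; the solution is $Y_t:=\xi\otimes Y_{0,t}$, and the $p$-variation bound follows from $\|y_1^{s,t}\|\lesssim \omega(s,t)^{1/p}$ plus super-additive summation against $\omega$. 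For uniqueness, any $\tilde Y$ satisfying Definition~\ref{Definition of solution of RDE} inherits $\|\tilde Y_{s,t}-y_1^{s,t}\|\lesssim \theta(s,t)+\omega(s,t)^{([p]+1)/p}$ with $\sum_j\theta(t_j,t_{j+1})\to 0$, so $\tilde Y$ is also a sewing of $y_1^{\bullet,\bullet}$ and must coincide with $Y$. The hard part will be Step~2: executing the cancellation that collapses $V^{\circ j}(Id)(1)$ to the Euler sum in a dimension-free way, working entirely through the abstract composition on $\mathcal{D}^{\bullet}(L^{[p]}(\mathcal{U}))$ from Definitions~\ref{Definition differential operator f} and \ref{Definition of f circ k}, without any recourse to coordinates, PBW in a concrete basis, or Whitney extensions, all of which fail in an infinite-dimensional $\mathcal{U}$.
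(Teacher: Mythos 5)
Your Steps 1 and 2 do correspond to real ingredients of the paper (Lemma \ref{Lemma simple property of solution of ODE} for membership of the ODE flow in $G^{\left[ p\right] }\left( \mathcal{U}\right) $, and Lemma \ref{Lemma Euler expansion of solution of ODE and difference between one step and two steps} for the $\omega \left( s,t\right) ^{\frac{\left[ p\right] +1}{p}}$ Euler error). The gap is in Steps 3--4. The local object $y_{1}^{s,t}$ is \emph{not} an a priori given functional of $\left( s,t\right) $: its defining vector field is $F\left( f\left( \cdot +\pi _{1}\left( Y_{s}\right) \right) \right) $, so it depends on the unknown solution at the left endpoint. You therefore cannot feed the family $\left( s,t\right) \mapsto y_{1}^{s,t}$ into Lyons' almost-multiplicative/sewing theorem to \emph{produce} $Y$ --- the construction is circular. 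Moreover, the claimed almost-multiplicativity $\left\Vert y_{1}^{s,t}-y_{1}^{s,u}\otimes y_{1}^{u,t}\right\Vert \lesssim \omega \left( s,t\right) ^{\frac{\left[ p\right] +1}{p}}$ is not just Chen's identity plus Step 2: the increment of the flow over $\left[ u,t\right] $ depends nonlinearly on the position reached at time $u$ (the base point of the shifted vector field), and multiplicativity in $T^{\left[ p\right] }\left( \mathcal{U}\right) $ does not encode that state dependence; controlling it is exactly the content of the two-step/one-step comparison and the continuity-in-initial-value estimates (Lemma \ref{Lemma two ODE with different vector fields}, Lemma \ref{Lemma B}). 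The same conflation breaks your uniqueness argument: a second solution $\widetilde{Y}$ generates a \emph{different} family of local approximations (built from $\widetilde{Y}_{s}$, not $Y_{s}$), so ``both are sewings of $y_{1}^{\bullet ,\bullet }$'' is not available; one needs a Gronwall-type comparison through a common approximation scheme.

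This is precisely why the paper does not sew: it builds concatenated ODE approximations $y^{n}$ along dyadic partitions of the control (Notation \ref{Notation dyadic partition of a control}, Lemma \ref{Lemma dyadic paths are uniformly bounded}), proves uniform bounds and the key estimate $\left\Vert y_{s_{0},t}^{n}-y_{1}^{n,s,t}\right\Vert \leq C_{p}\omega \left( s,t\right) ^{\frac{\left[ p\right] +1}{p}}$ by an induction on the tensor level combined with recursive bisection and the dyadic-to-general-interval lemmas (Lemmas \ref{Lemma decompose an interval as union of dyadic intervals}--\ref{Lemma from dyadic difference to non-dyadic differene}), establishes Lipschitz dependence of these approximations on the initial value (Lemma \ref{Lemma continuity in initial value of dyadic solutions}), and then runs Davie's interval-switching argument (the paths $Z^{j}$) to get convergence of $y^{n}$, existence, and --- by comparing any candidate solution $\widetilde{Y}$ to the same scheme --- uniqueness. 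If you want to keep a sewing-flavoured proof, the object you must sew is the \emph{flow} (maps on $\mathcal{U}$ or on $G^{\left[ p\right] }\left( \mathcal{U}\right) $) with composition as the product and Lipschitz estimates in the base point, as in Bailleul's work; sewing group-valued increments in $T^{\left[ p\right] }\left( \mathcal{U}\right) $ as proposed does not close.
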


The proof of Theorem \ref{Theorem existence and uniqueness} starts from p%
\pageref{Proof of Theorem existence and uniqueness}.

\begin{remark}
The solution of $\left( \ref{RDE}\right) $ is defined in Gubinelli/Davie's
sense. Based on Universal Limit Theorem and Theorem \ref{Theorem continuity}
below, when the vector field is $Lip\left( \gamma \right) $ for $\gamma >p$,
the solutions in Lyons \cite{Terry98} and in Friz \&\ Victoir \cite%
{FrizVictoirbook} coincide with our solution.
\end{remark}

\begin{remark}
Based on Euler expansion of solution of ODE ($\left( \ref{equ 1 in Lemma of
ODE}\right) $ in Lemma \ref{Lemma Euler expansion of solution of ODE and
difference between one step and two steps} below) and the definition of RDE
solution (Definition \ref{Definition of solution of RDE}), the solution of
the ODE $\left( \ref{definition of ys,t in main theorem}\right) $ coincides
with the solution of the RDE: 
\begin{equation*}
dY=f\left( Y\right) dX^{s,t}\text{, }Y_{0}=\xi \text{,}
\end{equation*}%
with $X^{s,t}\in C^{p-var}\left( \left[ 0,1\right] ,G^{\left[ p\right]
}\left( \mathcal{V}\right) \right) $ defined by $X_{u}^{s,t}=\exp \left(
u\log X_{s,t}\right) $, $u\in \left[ 0,1\right] $.
\end{remark}

\begin{notation}
Suppose $\omega :\left\{ 0\leq s\leq t\leq T\right\} \rightarrow \overline{%
\mathbb{R}
^{+}}$ is a control. For $\alpha \in (0,1]$, define control 
\begin{equation}
\omega ^{\alpha }\left( s,t\right) :=\sup_{D\subset \left[ s,t\right]
,\omega \left( t_{j},t_{j+1}\right) \leq \alpha }\sum_{j,t_{j}\in D}\omega
\left( t_{j},t_{j+1}\right) \text{, }\forall 0\leq s\leq t\leq T\text{.}
\label{Definition of omega alpha}
\end{equation}%
Suppose $X^{i}\in C^{p-var}\left( \left[ 0,T\right] ,G^{\left[ p\right]
}\left( \mathcal{V}\right) \right) $, $i=1,2$. For control $\omega $,
integer $n=1,2,\dots ,\left[ p\right] $, $\left[ s,t\right] \subseteq \left[
0,T\right] $ and $\alpha \in (0,1]$, denote%
\begin{gather}
d_{p,\left[ s,t\right] }^{n}\left( X^{1},X^{2}\right) :=\left(
\sup_{D\subset \left[ s,t\right] }\sum_{j,t_{j}\in D}\left\Vert \pi
_{n}\left( X_{t_{j},t_{j+1}}^{1}\right) -\pi _{n}\left(
X_{t_{j},t_{j+1}}^{2}\right) \right\Vert ^{\frac{p}{n}}\right) ^{\frac{n}{p}}%
\text{,}  \label{Definition of dpn} \\
d_{p,\left[ s,t\right] }^{n,\alpha }\left( X^{1},X^{2}\right) :=\left(
\sup_{D\subset \left[ s,t\right] ,\omega \left( t_{j},t_{j+1}\right) \leq
\alpha }\sum_{j,t_{j}\in D}\left\Vert \pi _{n}\left(
X_{t_{j},t_{j+1}}^{1}\right) -\pi _{n}\left( X_{t_{j},t_{j+1}}^{2}\right)
\right\Vert ^{\frac{p}{n}}\right) ^{\frac{n}{p}}\text{.}
\label{Definition of dpn^alpha}
\end{gather}
\end{notation}

\begin{theorem}
\label{Theorem continuity}For $i=1,2$ and $\gamma >p\geq 1$, suppose $%
X^{i}\in C^{p-var}\left( \left[ 0,T\right] ,G^{\left[ p\right] }\left( 
\mathcal{V}\right) \right) $, $f^{i}\in L\left( \mathcal{V},C^{\gamma
}\left( \mathcal{U},\mathcal{U}\right) \right) $ and $\xi ^{i}\in G^{\left[ p%
\right] }\left( U\right) $. Let $Y^{i}:\left[ 0,T\right] \rightarrow G^{%
\left[ p\right] }\left( \mathcal{U}\right) $ be the solution of the rough
differential equation%
\begin{equation*}
dY^{i}=f^{i}\left( Y^{i}\right) dX^{i}\text{, }Y_{0}^{i}=\xi ^{i}\text{.}
\end{equation*}%
Define control $\omega \ $by%
\begin{equation*}
\omega \left( s,t\right) =\left\vert f^{1}\right\vert _{\gamma
}^{p}\left\Vert X^{1}\right\Vert _{p-var,\left[ s,t\right] }^{p}+\left\vert
f^{2}\right\vert _{\gamma }^{p}\left\Vert X^{2}\right\Vert _{p-var,\left[ s,t%
\right] }^{p}\text{, }\forall 0\leq s\leq t\leq T\text{.}
\end{equation*}%
For $\alpha \in (0,1]$, define $\omega ^{\alpha }$ and $d_{p}^{n,\alpha }$
based on $\omega $ as at $\left( \ref{Definition of omega alpha}\right) $
and $\left( \ref{Definition of dpn^alpha}\right) $. Then there exists $%
C_{p,\gamma }$ (which only depends on $p$ and $\gamma $) such that, for $%
\alpha \in (0,1]$, $\left[ s,t\right] \subseteq \left[ 0,T\right] $ and $%
k=1,2,\dots ,\left[ p\right] $,%
\begin{eqnarray}
&&d_{p,\left[ s,t\right] }^{k}\left( Y^{1},Y^{2}\right)
\label{formula of continuity in main theorem} \\
&\leq &C_{p,\gamma }\exp \left( C_{p,\gamma }\alpha ^{-1}\omega ^{\alpha
}\left( s,t\right) \right)  \notag \\
&&\times \left( \omega ^{\alpha }\left( s,t\right) ^{\frac{k}{p}}\left(
\left\Vert \pi _{1}\left( Y_{s}^{1}\right) -\pi _{1}\left( Y_{s}^{2}\right)
\right\Vert +\left\vert \frac{f^{1}}{\left\vert f^{1}\right\vert _{\gamma }}-%
\frac{f^{2}}{\left\vert f^{2}\right\vert _{\gamma }}\right\vert _{\gamma
-1}\right) +\sum_{n=1}^{\left[ p\right] }\omega ^{\alpha }\left( s,t\right)
^{\frac{\left( k-n\right) \vee 0}{p}}d_{p,\left[ s,t\right] }^{n,\alpha
}\left( \delta _{\left\vert f^{1}\right\vert _{\gamma }}X^{1},\delta
_{\left\vert f^{2}\right\vert _{\gamma }}X^{2}\right) \right) \text{.} 
\notag
\end{eqnarray}
\end{theorem}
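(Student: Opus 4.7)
The plan is to combine the explicit ODE representation provided by Theorem \ref{Theorem existence and uniqueness} with a Davie-type multiplicative iteration, after a preliminary scale normalisation. First I would set $\tilde f^i := f^i/\vert f^i\vert_\gamma$ and $\tilde X^i := \delta_{\vert f^i\vert_\gamma}X^i$; since the driving action appears only through the combination $F(f^i)^{\circ k}\pi_k(X^i)$ at each level and the RDE is invariant under $(f,X)\mapsto(f/\lambda,\delta_\lambda X)$, $Y^i$ solves the RDE with data $(\tilde f^i,\tilde X^i)$, whose vector field has unit $Lip(\gamma)$ norm and whose control is now $\omega(s,t)=\sum_i\Vert\tilde X^i\Vert_{p-var,[s,t]}^p$. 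The rough-path terms on the right of $(\ref{formula of continuity in main theorem})$ then become exactly $d_{p,[s,t]}^{n,\alpha}(\tilde X^1,\tilde X^2)$, and we may work throughout with unit-scaled vector fields.

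On any subinterval $[s,t]$ with $\omega(s,t)\le\alpha\le 1$, Theorem \ref{Theorem existence and uniqueness} gives $\Vert Y^i_{s,t}-y^{i,s,t}_1\Vert\le C_p\omega(s,t)^{([p]+1)/p}$, where $y^{i,s,t}$ solves the explicit ODE $(\ref{definition of ys,t in main theorem})$ driven by $\log\tilde X^i_{s,t}$ with vector field built from $F(\tilde f^i(\cdot+\pi_1(Y^i_s)))$. I would compare $y^{1,s,t}_1$ with $y^{2,s,t}_1$ by a direct Gronwall estimate on $[0,1]$: the two driving vector fields differ through (i) the vector-field discrepancy $\vert\tilde f^1-\tilde f^2\vert_{\gamma-1}$, (ii) the translation mismatch $\Vert\pi_1(Y^1_s)-\pi_1(Y^2_s)\Vert$ entering through the shift $\cdot+\pi_1(Y^i_s)$ and bounded because $\tilde f^i\in Lip(\gamma)$, and (iii) the log-signature differences $\Vert\pi_n(\log\tilde X^1_{s,t})-\pi_n(\log\tilde X^2_{s,t})\Vert$, which by Baker--Campbell--Hausdorff applied level by level are equivalent to $\Vert\pi_n(\tilde X^1_{s,t})-\pi_n(\tilde X^2_{s,t})\Vert$ up to strictly higher-order terms. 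Projecting onto $\pi_k$ and tracking the homogeneity in $\omega(s,t)^{1/p}$ that each factor contributes through the $F^{\circ k}$ composition produces a local level-$k$ estimate of the form
\begin{equation*}
\Vert\pi_k(Y^1_{s,t})-\pi_k(Y^2_{s,t})\Vert\le C_p\Bigl(\omega(s,t)^{k/p}\bigl(\Vert\pi_1(Y^1_s)-\pi_1(Y^2_s)\Vert+\vert\tilde f^1-\tilde f^2\vert_{\gamma-1}\bigr)+\sum_{n=1}^{[p]}\omega(s,t)^{((k-n)\vee 0)/p}\Vert\pi_n(\tilde X^1_{s,t})-\pi_n(\tilde X^2_{s,t})\Vert\Bigr)+R_k(s,t),
\end{equation*}
with remainder $R_k(s,t)$ of strictly higher $\omega$-order than $\omega(s,t)^{k/p}$.

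The third step is the aggregation. Given a partition $D=\{t_j\}$ of $[s,t]$ with $\omega(t_j,t_{j+1})\le\alpha$, by construction $\sum_j\omega(t_j,t_{j+1})\le\omega^\alpha(s,t)$ and the number of intervals is at most $\alpha^{-1}\omega^\alpha(s,t)+1$. Chaining the local estimates across consecutive subintervals, the first-level error $\Vert\pi_1(Y^1_{t_j})-\pi_1(Y^2_{t_j})\Vert$ feeds back into the initial-shift term at step $j+1$, producing a discrete Gronwall recursion whose compounding factor is bounded by $\prod_j(1+C_p\omega(t_j,t_{j+1})^{1/p})\le\exp(C_p\alpha^{-1+1/p}\omega^\alpha(s,t))\le\exp(C_{p,\gamma}\alpha^{-1}\omega^\alpha(s,t))$. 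Taking $p/k$-th power sums of the level-$k$ local bound over an arbitrary sub-partition of $[s,t]$ and invoking super-additivity of controls assembles the left-hand side into $d_{p,[s,t]}^k(Y^1,Y^2)$ and the right-hand side into the $d_{p,[s,t]}^{n,\alpha}(\tilde X^1,\tilde X^2)$ quantities; the remainders $R_k$ collapse to lower order by the standard Davie--Friz--Victoir sub-additivity trick.

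The hard part will be the multi-level bookkeeping: the level-$k$ output is driven not only by $\pi_k$ of the rough-path difference but by every lower level $\pi_n$, $n<k$, transported by the remaining $k-n$ compositions of $F(\tilde f^i)$, which is precisely what produces the weight $\omega^\alpha(s,t)^{((k-n)\vee 0)/p}$. Ensuring that every remainder generated in expanding the $F^{\circ k}$ composition, in the Baker--Campbell--Hausdorff passage from $\log\tilde X^i_{s,t}$ to $\pi_n(\tilde X^i_{s,t})$, and in the two ODE-to-RDE comparison errors from Theorem \ref{Theorem existence and uniqueness} carries a strictly higher $\omega$-power than $\omega(s,t)^{k/p}$, so that the discrete Gronwall closes without corrupting any level, is the delicate part of the argument and is where the prefactor $\exp(C_{p,\gamma}\alpha^{-1}\omega^\alpha(s,t))$ is paid for.
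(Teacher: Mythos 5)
Your skeleton for the second half of the argument (scale normalisation, chaining over at most $2\alpha^{-1}\omega^{\alpha}\left( s,t\right)$ intervals of mesh $\alpha$, a discrete Gronwall whose compounding factor is $\exp \left( C_{p,\gamma }\alpha ^{-1}\omega ^{\alpha }\left( s,t\right) \right)$, and multi-level bookkeeping for $k\geq 2$) does match the paper, but the way you propose to produce the \emph{local} level-$k$ estimate has a genuine gap. You compare $Y^{1}_{s,t}$ with $Y^{2}_{s,t}$ by the triangle inequality through the two ODE approximations, so your remainder $R_{k}\left( s,t\right)$ necessarily contains the two ODE-to-RDE errors $\left\Vert Y^{i}_{s,t}-y^{i,s,t}_{1}\right\Vert \leq C_{p}\,\omega \left( s,t\right) ^{\frac{\left[ p\right] +1}{p}}$ from Theorem \ref{Theorem existence and uniqueness}. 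These are of higher order in $\omega$, but they are \emph{absolute} errors: they are not proportional to any of the difference quantities $\left\Vert \pi _{1}\left( Y_{s}^{1}\right) -\pi _{1}\left( Y_{s}^{2}\right) \right\Vert$, $\left\vert f^{1}-f^{2}\right\vert _{\gamma -1}$, $d_{p,\left[ s,t\right] }^{n,\alpha }\left( X^{1},X^{2}\right)$. Since the right-hand side of $\left( \ref{formula of continuity in main theorem}\right)$ tends to zero as the two data sets approach each other, no additive term of this kind can appear after aggregation; yet at fixed mesh $\alpha$ the sum $\sum_{j}\omega \left( t_{j},t_{j+1}\right) ^{\frac{\left[ p\right] +1}{p}}$ is of order $\alpha ^{\frac{1}{p}}\omega ^{\alpha }\left( s,t\right)$ and does not vanish, while letting the mesh shrink to kill it destroys your Gronwall factor, because $\sum_{j}\omega \left( t_{j},t_{j+1}\right) ^{\frac{1}{p}}$ blows up as the mesh goes to zero. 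So the step ``the remainders $R_{k}$ collapse to lower order by the standard sub-additivity trick'' does not close.

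What is missing is precisely the paper's central device: one never estimates $\left\Vert Y^{i}_{s,t}-y^{i,s,t}_{1}\right\Vert$ separately, but works with the second-order difference $\Gamma ^{s,t}=\left( Y_{s_{0},t}^{1}-Y_{s_{0},t}^{2}\right) -\left( y_{1}^{1,s,t}-y_{1}^{2,s,t}\right)$ and shows (Lemma \ref{Lemma B}, built on the four-point Lipschitz estimate of Lemma \ref{Lemma simple but important} and the explicit Euler remainder of Lemma \ref{Lemma explicit Euler expansion}) that its additivity defect is bounded by $C_{p}\bigl( \omega \left( s,t\right) ^{\frac{1}{p}}\left\Vert \Gamma ^{s,u}\right\Vert +\omega \left( s,t\right) ^{\frac{\gamma }{p}}\left( \sup_{r}\left\Vert Y_{s_{0},r}^{1}-Y_{s_{0},r}^{2}\right\Vert +\left\vert f^{1}-f^{2}\right\vert _{\gamma -1}\right) +\sum_{n}\omega \left( s,t\right) ^{\frac{\gamma -n}{p}}d_{p,\left[ s,t\right] }^{n}\left( X^{1},X^{2}\right) \bigr)$, i.e.\ the defect is itself proportional to the difference data. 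Summing this defect requires a further idea your sketch does not contain: two different controls ($\omega$ and the combined control $\widetilde{\omega }$ built from the difference data) enter with exponents $\frac{1}{p}$ and $\frac{\gamma }{p}$, and the paper resolves this with a dyadic bisection that alternately halves $\widetilde{\omega }$ and $\omega$ at frequencies $m/M$ with $1>\frac{m}{M}>\frac{p}{\gamma }$; the boundary terms $\Gamma ^{t_{j}^{n},t_{j+1}^{n}}$ then vanish in the limit $n\rightarrow \infty$ because their bound has exponent $\frac{\left[ p\right] +1}{p}>1$ (a limit argument, not a fixed-mesh sum). Only after this clean local estimate (with no non-proportional remainder) do the self-bounding choice of $\delta _{p,\gamma }$, the fixed-mesh chaining with the discrete Gronwall, Lemma \ref{Lemma two ODE with different vector fields} for the ODE-vs-ODE comparison, and the neo-classical inequality for levels $k\geq 2$ enter --- which is the portion of your plan that is essentially correct.
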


The proof of Theorem \ref{Theorem continuity} starts from p\pageref{Proof of
Theorem continuity}.

Based on Lemma \ref{Lemma log into exp} below and sub-additivity of a
control, $\left( \ref{formula of continuity in main theorem}\right) $ holds
with $X^{i}$ replaced by $\log X^{i}$, $i=1,2$.

According to Cass, Litterer \& Lyons \cite{CassLittererLyons}, for a large
family of Gaussian processes (including fractional Brownian motion when $%
H>4^{-1}$) and any $\alpha \in (0,1]$, $\exp \left( C_{p,\gamma }\alpha
^{-1}\omega ^{\alpha }\left( s,t\right) \right) $ has finite moments of all
orders.

\begin{corollary}
\label{Corollary error of ODE approximation}For $\gamma >p\geq 1$, suppose $%
X\in C^{p-var}\left( \left[ 0,T\right] ,G^{\left[ p\right] }\left( \mathcal{V%
}\right) \right) $, $f\in L\left( \mathcal{V},C^{\gamma }\left( \mathcal{U},%
\mathcal{U}\right) \right) $ and $\xi \in G^{\left[ p\right] }\left( 
\mathcal{U}\right) $. Let $Y:\left[ 0,T\right] \rightarrow G^{\left[ p\right]
}\left( \mathcal{U}\right) $ be the solution of the rough differential
equation%
\begin{equation*}
dY=f\left( Y\right) dX\text{, }Y_{0}=\xi \text{.}
\end{equation*}%
For finite partition $D=\left\{ t_{j}\right\} _{j=0}^{n}$ of $\left[ 0,T%
\right] $, let $y^{D}:\left[ 0,T\right] \rightarrow G^{\left[ p\right]
}\left( \mathcal{V}\right) $ be the solution of the ordinary differential
equation%
\begin{equation}
dy_{u}^{D}=\tsum\nolimits_{k=1}^{\left[ p\right] }F\left( f\right) ^{\circ
k}\pi _{k}\left( \log X_{t_{j},t_{j+1}}\right) \left( Id_{L^{\left[ p\right]
}\left( \mathcal{U}\right) }\right) \left( y_{u}^{D}\right) \frac{du}{%
t_{j+1}-t_{j}}\text{, }u\in \left[ t_{j},t_{j+1}\right] \text{, }%
y_{0}^{D}=\xi \text{.}  \label{expression of concatenated ODEs}
\end{equation}%
Denote control $\omega $ by $\omega \left( s,t\right) :=\left\vert
f\right\vert _{\gamma }^{p}\left\Vert X\right\Vert _{p-var,\left[ s,t\right]
}^{p}$ and denote $\omega ^{\alpha }$ based on $\omega $ as at $\left( \ref%
{Definition of omega alpha}\right) $. Then $y^{D}$ takes value in $G^{\left[
p\right] }\left( \mathcal{U}\right) $, and there exists $C_{p,\gamma }$ such
that, for any $\alpha \in (0,1]$, (with $\alpha _{0}:=\max_{t_{j}\in
D}\omega \left( t_{j},t_{j+1}\right) $)%
\begin{equation}
\left\Vert Y_{T}-y_{T}^{D}\right\Vert \leq C_{p,\gamma }\left\Vert \xi
\right\Vert \exp \left( C_{p,\gamma }\left( \omega ^{\alpha _{0}}\left(
0,T\right) +\alpha ^{-1}\omega ^{\alpha }\left( 0,T\right) \right) \right)
\left( \tsum\nolimits_{j=0}^{n-1}\omega \left( t_{j},t_{j+1}\right) ^{\frac{%
\left[ p\right] +1}{p}}\vee \omega \left( t_{j},t_{j+1}\right) ^{\left[ p%
\right] }\right) \text{.}
\label{estimate of error of concatenated ODE approximation}
\end{equation}
\end{corollary}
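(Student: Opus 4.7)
My plan is to telescope $Y_T-y^D_T$ across the partition $D$, using Theorem \ref{Theorem existence and uniqueness} to control the single-step ODE-versus-RDE error on each subinterval and Theorem \ref{Theorem continuity} to propagate the accumulated discrepancy from $t_{j+1}$ to $T$. I would first observe, via the second Remark following Theorem \ref{Theorem existence and uniqueness}, that the ODE $(\ref{expression of concatenated ODEs})$ on $[t_j,t_{j+1}]$ started from $y^D_{t_j}$, after the reparametrisation $u\mapsto (u-t_j)/(t_{j+1}-t_j)$ and the tensor-shift identity absorbing $y^D_{t_j}$ into the vector field $F(f)$, coincides with the RDE solution driven by the log-linear path $u\mapsto\exp(u\log X_{t_j,t_{j+1}})$ starting from $y^D_{t_j}$. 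This both shows $y^D$ takes values in $G^{[p]}(\mathcal{U})$ and, via $(\ref{bound on RDE solution in main theorem})$ applied on each subinterval together with the submultiplicativity $\|a\otimes b\|\le\|a\|\|b\|$ of the admissible norm, yields a pointwise a priori bound $\|y^D_{t_j}\|\le\|\xi\|\exp(C_p\omega^{\alpha_0}(0,T))$ that supplies the $\|\xi\|\exp(C_{p,\gamma}\omega^{\alpha_0}(0,T))$ prefactor in $(\ref{estimate of error of concatenated ODE approximation})$.

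Next, define $Y^{(j)}:[t_j,T]\to G^{[p]}(\mathcal{U})$ to be the RDE solution driven by $X$ and $f$ with $Y^{(j)}_{t_j}=y^D_{t_j}$. Then $Y^{(0)}_T=Y_T$, $Y^{(n)}_T=y^D_T$, and
$$Y_T-y^D_T=\sum_{j=0}^{n-1}\bigl(Y^{(j)}_T-Y^{(j+1)}_T\bigr).$$
On $[t_j,t_{j+1}]$, $Y^{(j)}$ is the true RDE and $y^D$ solves the associated ODE, both starting from $y^D_{t_j}$, so $(\ref{estimate of difference in main theorem})$ gives
$$\bigl\|Y^{(j)}_{t_j,t_{j+1}}-y^D_{t_j,t_{j+1}}\bigr\|\le C_p\bigl(\omega(t_j,t_{j+1})^{([p]+1)/p}\vee\omega(t_j,t_{j+1})^{[p]}\bigr).$$
Since $Y^{(j+1)}_{t_{j+1}}=y^D_{t_{j+1}}$, tensoring on the left by $y^D_{t_j}$ and using the admissible-norm submultiplicativity yields
$$\bigl\|Y^{(j)}_{t_{j+1}}-Y^{(j+1)}_{t_{j+1}}\bigr\|\le\|y^D_{t_j}\|\cdot C_p\bigl(\omega(t_j,t_{j+1})^{([p]+1)/p}\vee\omega(t_j,t_{j+1})^{[p]}\bigr).$$

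From time $t_{j+1}$ onward $Y^{(j)}$ and $Y^{(j+1)}$ solve the same RDE driven by $X$ and $f$, so Theorem \ref{Theorem continuity} (with both rough-path and vector-field distances vanishing) controls their $p$-variation distance on $[t_{j+1},T]$ purely in terms of $\|\pi_1(Y^{(j)}_{t_{j+1}})-\pi_1(y^D_{t_{j+1}})\|$, multiplied by the factor $\exp(C_{p,\gamma}\alpha^{-1}\omega^\alpha(t_{j+1},T))$. To convert this into a pointwise endpoint bound I would restrict the sup defining $d^k_{p,[t_{j+1},T]}$ to the trivial partition $\{t_{j+1},T\}$, and then decompose
$$Y^{(j)}_T-Y^{(j+1)}_T=\bigl(Y^{(j)}_{t_{j+1}}-Y^{(j+1)}_{t_{j+1}}\bigr)\otimes Y^{(j)}_{t_{j+1},T}+Y^{(j+1)}_{t_{j+1}}\otimes\bigl(Y^{(j)}_{t_{j+1},T}-Y^{(j+1)}_{t_{j+1},T}\bigr).$$
The first summand is handled by the previous display together with the uniform bound on $\|Y^{(j)}_{t_{j+1},T}\|$ coming from $(\ref{bound on RDE solution in main theorem})$; the second by the continuity estimate applied to the increments together with the same bound on $\|\pi_1(Y^{(j)}_{t_{j+1}})-\pi_1(y^D_{t_{j+1}})\|$. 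Summing over $j$ and bounding $\omega^\alpha(t_{j+1},T)\le\omega^\alpha(0,T)$ delivers $(\ref{estimate of error of concatenated ODE approximation})$.

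The main obstacle is the algebraic identification in the first step: one must verify in detail that absorbing $y^D_{t_j}$ into the starting point of the ODE from Theorem \ref{Theorem existence and uniqueness} really reproduces $(\ref{expression of concatenated ODEs})$ at every order $k=1,\dots,[p]$, which amounts to unwinding how the iterated composition $F(f)^{\circ k}$ interacts with left tensor multiplication by a group element and exploits the fact that $f$ depends only on $\pi_1$. The remaining bookkeeping with $p$-variation, admissible tensor norms, and the $\omega^\alpha$ family of controls is then routine.
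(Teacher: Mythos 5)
Your overall architecture is the same as the paper's: you telescope through the intermediate RDE solutions started at $y^{D}_{t_{j}}$ (the paper's $Z^{j}$), use the one-step estimate $\left( \ref{estimate of difference in main theorem}\right) $ from Theorem \ref{Theorem existence and uniqueness} on each $\left[ t_{j},t_{j+1}\right] $ after the group-shift identification of $\left( \ref{expression of concatenated ODEs}\right) $ with the shifted-vector-field ODE started at $1$ (this identification is legitimate and is exactly how the paper uses Lemma \ref{Lemma explicit form of Fcirck}), and propagate the level-one discrepancy to time $T$ with Theorem \ref{Theorem continuity}. That part is sound.

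The genuine gap is your a priori bound $\left\Vert y^{D}_{t_{j}}\right\Vert \leq \left\Vert \xi \right\Vert \exp \left( C_{p}\,\omega ^{\alpha _{0}}\left( 0,T\right) \right) $, which you need because every term of the telescoping sum carries the prefactors $\left\Vert y^{D}_{t_{j}}\right\Vert $ and $\left\Vert y^{D}_{t_{j+1}}\right\Vert $. The argument you sketch (apply $\left( \ref{bound on RDE solution in main theorem}\right) $ on each subinterval and multiply via submultiplicativity) does not give this: the per-subinterval bound is homogeneous, of order $\omega \left( t_{j},t_{j+1}\right) ^{1/p}$, so multiplying the subinterval factors yields something like $\exp \left( C_{p}\sum_{j}\omega \left( t_{j},t_{j+1}\right) ^{1/p}\right) $, and the sum of $1/p$-th powers is \emph{not} controlled by $\omega ^{\alpha _{0}}\left( 0,T\right) $ (take $n$ subintervals each with $\omega$-size $1/n$: the exponent grows like $n^{1-1/p}$ while $\omega ^{\alpha _{0}}\left( 0,T\right) $ stays bounded). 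This is precisely why the paper does not bound $\max_{j}\left\Vert y^{D}_{t_{j}}\right\Vert $ directly, but proves $M_{\left[ p\right] }:=\max_{k\leq \left[ p\right] }\max_{j}\left\Vert \pi _{k}\left( y^{D}_{t_{j}}\right) \right\Vert \leq C_{p,\gamma }\exp \left( C_{p,\gamma }\left( \omega ^{\alpha _{0}}\left( 0,T\right) +\alpha ^{-1}\omega ^{\alpha }\left( 0,T\right) \right) \right) $ by an induction over tensor levels: $\pi _{k}\left( y^{D}_{t_{j}}\right) $ is compared with $\pi _{k}\left( Y_{t_{j}}\right) $ (bounded through Theorem \ref{Theorem existence and uniqueness} on the whole interval), and the difference is bounded by the very telescoping estimate, whose constant involves $M_{k-1}$; the factor $\exp \left( C\,\omega ^{\alpha _{0}}\left( 0,T\right) \right) $ then arises from $\prod_{j}\left( 1+C\,\omega \left( t_{j},t_{j+1}\right) ^{\frac{\left[ p\right] +1}{p}}\vee \omega \left( t_{j},t_{j+1}\right) ^{\left[ p\right] }\right) \leq \exp \left( C\sum_{j}\omega \left( t_{j},t_{j+1}\right) \right) $, a sum of \emph{first} powers. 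Unless you either reproduce this level-by-level induction or prove a substitute (e.g.\ that the piecewise log-linear interpolation of $X$ along $D$ has $p$-variation comparable to that of $X$, so that $y^{D}$ is itself an RDE solution to which $\left( \ref{bound on RDE solution in main theorem}\right) $ applies globally), your summation over $j$ cannot be closed with the constants claimed in $\left( \ref{estimate of error of concatenated ODE approximation}\right) $. (A minor further remark: the paper gets the $\left\Vert \xi \right\Vert $ factor by normalising $\xi =1$ and shifting $f$, rather than carrying $\left\Vert \xi \right\Vert $ through an a priori bound, but that difference is cosmetic.)
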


The proof of Corollary \ref{Corollary error of ODE approximation} starts
from p\pageref{Proof of Corollary error of ODE approximation}.

By similar arguments, $\left( \ref{estimate of error of concatenated ODE
approximation}\right) $ holds if $y^{D}$ is replaced by concatenated Euler
approximation. If we only consider $\left\Vert \pi _{1}\left( Y_{T}\right)
-\pi _{1}\left( y_{T}^{D}\right) \right\Vert $, then we can drop $\omega
\left( t_{j},t_{j+1}\right) ^{\left[ p\right] }$ in $\left( \ref{estimate of
error of concatenated ODE approximation}\right) $.

\section{Proofs}

We specify the dependence of coefficients (e.g. $C_{p,\gamma }$), but their
exact values may change from line to line.

\noindent For $\gamma >0$, let $\lfloor \gamma \rfloor $ denote the largest
integer which is strictly less than $\gamma $, and denote $\left\{ \gamma
\right\} :=\gamma -\lfloor \gamma \rfloor $.

\subsection{Preparation}

For Banach space $\mathcal{U}$, denote $Id_{\mathcal{U}}$ as the identity
function on $\mathcal{U}$, i.e. $Id_{\mathcal{U}}\left( u\right) =u$, $%
\forall u\in \mathcal{U}$.

\noindent We define ordered shuffle as in \cite{Lyonsnotes} (p73-74).

\begin{definition}[ordered shuffle]
For integer $k\geq 1$, denote by $S_{k}$ the symmetric group of order $k$.
For $j_{1}+\cdots +j_{n}=k$, $j_{i}\geq 1$, define ordered shuffle $OS\left(
j_{1},\dots ,j_{n}\right) $ to be the set of $\sigma \in S_{k}$ which satisfy%
\begin{gather*}
\sigma \left( 1\right) <\sigma \left( 2\right) <\cdots <\sigma \left(
j_{1}\right) \text{, \ }\sigma \left( j_{1}+1\right) <\cdots <\sigma \left(
j_{1}+j_{2}\right) \text{,} \\
\sigma \left( j_{1}+\cdots +j_{n-1}+1\right) <\cdots <\sigma \left(
j_{1}+\cdots +j_{n}\right) \text{, }\sigma \left( j_{1}\right) <\sigma
\left( j_{1}+j_{2}\right) <\cdots <\sigma \left( j_{1}+\cdots +j_{n}\right) 
\text{.}
\end{gather*}
\end{definition}

\begin{notation}
For $f\in L\left( \mathcal{V},C^{\gamma }\left( \mathcal{U},\mathcal{U}%
\right) \right) $ and $j_{1}+\cdots +j_{n}=k$, $j_{i}=1,\dots ,\lfloor
\gamma \rfloor $, denote by $f^{\circ j_{n}}\otimes \cdots \otimes f^{\circ
j_{1}}\in L\left( \mathcal{V}^{\otimes k},C\left( C^{\max_{i}j_{i}-1}\left( 
\mathcal{U},\mathcal{U}\right) ,C^{0}\left( \mathcal{U},\mathcal{U}^{\otimes
n}\right) \right) \right) $ the unique continuous linear operator which
satisfies that, $\forall \left\{ v_{j}\right\} _{j=1}^{k}\subset \mathcal{V}$%
, $\forall r\in C^{\max_{i}j_{i}-1}\left( \mathcal{U},\mathcal{U}\right) $, $%
\forall u\in \mathcal{U}$.%
\begin{eqnarray*}
&&\left( f^{\circ j_{n}}\otimes \cdots \otimes f^{\circ j_{1}}\right) \left(
v_{k}\otimes \cdots \otimes v_{1}\right) \left( r\right) \left( u\right) \\
&=&\left( f^{\circ j_{n}}\left( v_{k}\otimes \cdots \otimes
v_{k-j_{n}+1}\right) \left( r\right) \left( u\right) \right) \otimes \cdots
\otimes \left( f^{\circ j_{1}}\left( v_{j_{1}}\otimes \cdots \otimes
v_{1}\right) \left( r\right) \left( u\right) \right) \text{.}
\end{eqnarray*}
\end{notation}

Recall the Banach space $L^{n}\left( \mathcal{U}\right) :=%
\mathbb{R}
\oplus \mathcal{U\oplus }\cdots \oplus \mathcal{U}^{\otimes n}$ in
Definition \ref{Definition of Ln(V)} on p\pageref{Definition of Ln(V)}, $%
F\left( f\right) \left( y\right) :=y\otimes f\left( \pi _{1}\left( y\right)
\right) $ as denoted at $\left( \ref{Notation of F(f)}\right) $ on p\pageref%
{Notation of F(f)} and $F\left( f\right) ^{\circ k}$ defined in Definition %
\ref{Definition of f circ k} on p\pageref{Definition of f circ k}. For $%
\sigma \in S_{k}$, denote by $\sigma :\mathcal{V}^{\otimes k}\rightarrow 
\mathcal{V}^{\otimes k}$ the unique continuous linear operator which
satisfies%
\begin{equation*}
\sigma \left( v_{k}\otimes \cdots \otimes v_{1}\right) =v_{\sigma \left(
k\right) }\otimes \cdots \otimes v_{\sigma \left( 1\right) }\text{, }\forall
\left\{ v_{j}\right\} _{j=1}^{k}\subset \mathcal{V}\text{.}
\end{equation*}

\begin{lemma}
\label{Lemma explicit form of Fcirck}Suppose $f\in L\left( \mathcal{V}%
,C^{\gamma }\left( \mathcal{U},\mathcal{U}\right) \right) $. Then for $%
k=1,\dots ,\lfloor \gamma \rfloor +1$ and any $v\in \mathcal{V}^{\otimes k}$%
, 
\begin{eqnarray}
&&F\left( f\right) ^{\circ k}\left( v\right) \left( Id_{L^{n}\left( \mathcal{%
U}\right) }\right) \left( y\right)  \label{explicit form of Fcirck} \\
&=&y\otimes \left( \sum_{j_{1}+\cdots +j_{n}=k,j_{i}\geq 1}\,\sum_{\sigma
\in OS\left( j_{1},\dots ,j_{n}\right) }\left( f^{\circ j_{n}}\otimes \cdots
\otimes f^{\circ j_{1}}\right) \left( \sigma \left( v\right) \right) \right)
\left( Id_{\mathcal{U}}\right) \left( \pi _{1}\left( y\right) \right) \text{.%
}  \notag
\end{eqnarray}%
In particular, for $g\in G^{\left[ p\right] }\left( \mathcal{V}\right) $, if
let $y:\left[ 0,1\right] \rightarrow L^{\left[ p\right] }\left( \mathcal{U}%
\right) $ denote the solution to the ODE%
\begin{equation*}
dy_{u}=\sum_{k=1}^{\left[ p\right] }F\left( f\right) ^{\circ k}\pi
_{k}\left( \log g\right) \left( Id_{L^{\left[ p\right] }\left( \mathcal{U}%
\right) }\right) \left( y_{u}\right) du\text{, }u\in \left[ 0,1\right] \text{%
, }y_{0}=1\text{,}
\end{equation*}%
then, with $y^{k}:=\pi _{k}\left( y\right) $, $k=0,1,\dots ,\left[ p\right] $%
, we have%
\begin{eqnarray}
y_{t}^{0} &\equiv &1\text{,}  \label{explicit form of levels of ODE} \\
y_{t}^{k} &=&\sum_{j=1}^{k}\int_{0}^{t}y_{u}^{k-j}\otimes \left(
\sum_{l=j}^{ \left[ p\right] }\sum_{i_{1}+\cdots +i_{j}=l,i_{s}\geq
1}\sum_{\sigma \in OS\left( i_{1},\dots ,i_{j}\right) }\left( \left(
f^{\circ i_{j}}\otimes \cdots \otimes f^{\circ i_{1}}\right) \sigma \left(
\pi _{l}\left( \log g\right) \right) \right) \right) \left( Id_{\mathcal{U}%
}\right) \left( y_{u}^{1}\right) du\text{.}  \notag
\end{eqnarray}
\end{lemma}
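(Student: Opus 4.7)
The plan is to prove $(\ref{explicit form of Fcirck})$ by induction on $k$ and then deduce $(\ref{explicit form of levels of ODE})$ by projecting the ODE onto level $k$ and integrating in $u$.

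For the base case $k=1$, the only composition of $1$ is $(1)$ with trivial ordered shuffle, and the right-hand side of $(\ref{explicit form of Fcirck})$ reduces to $y\otimes f^{\circ 1}(v)(Id_{\mathcal{U}})(\pi_1(y)) = y\otimes f(v)(\pi_1(y))$, which matches $F(f)^{\circ 1}(v)(Id_{L^n(\mathcal{U})})(y) = F(f)(v)(y)$ directly from $(\ref{definition of Fcirc1})$ and $(\ref{Notation of F(f)})$.

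For the inductive step from $k$ to $k+1$, I would use $F(f)^{\circ(k+1)}(v_{k+1}\otimes v) = F(f)^{\circ 1}(v_{k+1})\circ F(f)^{\circ k}(v)$, write the inductive hypothesis as $g(y) := F(f)^{\circ k}(v)(Id)(y) = y\otimes\Psi(\pi_1(y))$ for the sum $\Psi$ appearing on the right of $(\ref{explicit form of Fcirck})$, and compute
\[
F(f)^{\circ 1}(v_{k+1})(g)(y) = Dg(y)\bigl(F(f)(v_{k+1})(y)\bigr) = Dg(y)\bigl(y\otimes f(v_{k+1})(\pi_1(y))\bigr).
\]
The product rule on $y\otimes\Psi(\pi_1(y))$, together with the identification $\pi_1(y\otimes f(v_{k+1})(\pi_1(y))) = f(v_{k+1})(\pi_1(y))$ on the level set $\pi_0(y)=1$ (which is the relevant case throughout, since the ODE preserves $\pi_0=1$), produces two types of contributions: (i) the derivative falls on the leading $y$, prepending $f(v_{k+1})(\pi_1(y))$ as a new outermost tensor factor, which corresponds to appending a new length-$1$ block to the composition; (ii) the derivative falls inside $\Psi$, where the product rule on the tensor product distributes the derivative across the factors $f^{\circ j_i}(\sigma_i(\cdot))(Id_{\mathcal{U}})$, and using $f^{\circ 1}(v_{k+1})\circ f^{\circ j_i} = f^{\circ(j_i+1)}$ with $v_{k+1}$ prepended into the argument, this becomes $f^{\circ(j_i+1)}(v_{k+1}\otimes\sigma_i(\cdot))(Id_{\mathcal{U}})(\pi_1(y))$, enlarging block $i$ of the composition by one.

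The combinatorial core of the induction is to verify that, summed over all compositions of $k$ and their ordered shuffles, the two mechanisms (i) and (ii) enumerate each pair (composition of $k+1$, ordered shuffle) exactly once, with the correct tensor order. This bookkeeping — matching the placement of the new variable $v_{k+1}$ dictated by the product rule against the positional condition imposed by ordered shuffles on block maxima — is the principal obstacle I anticipate, and it needs an explicit bijective description of the passage from level $k$ to level $k+1$ under the ordered-shuffle constraint.

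Once $(\ref{explicit form of Fcirck})$ is established, part two follows by substitution. Projecting the ODE $dy_u = \sum_{l=1}^{[p]} F(f)^{\circ l}\pi_l(\log g)(Id)(y_u)\,du$ onto level $k$ and using $(\ref{explicit form of Fcirck})$ with $v = \pi_l(\log g)$, each summand takes the form $y_u\otimes\Psi_l(y_u^1)$, where the $j$-block piece of $\Psi_l$ lies in $\mathcal{U}^{\otimes j}$. Applying $\pi_k(y_u\otimes w) = y_u^{k-j}\otimes w$ for $w\in\mathcal{U}^{\otimes j}$ (and zero when $k<j$), then swapping the order of summation from $\{1\le j\le l\le [p]\}$ to $\{1\le j\le k,\ j\le l\le [p]\}$, yields the integrand of $(\ref{explicit form of levels of ODE})$; integration in $u$ from $0$ to $t$ gives the identity. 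The conservation law $y_t^0\equiv 1$ is immediate because the right-hand side of the ODE has zero $\pi_0$-component, so $\pi_0(y_u)$ is constant in $u$ equal to $\pi_0(y_0)=1$.
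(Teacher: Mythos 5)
Your strategy is the same as the paper's (the paper's entire proof is: induct on $k$ over elementary tensors, extend by linearity and continuity, and read off $(\ref{explicit form of levels of ODE})$ by projecting the ODE), and most of your scaffolding is sound: the base case, the splitting $F\left( f\right) ^{\circ \left( k+1\right) }\left( v_{k+1}\otimes v\right) =F\left( f\right) ^{\circ 1}\left( v_{k+1}\right) \circ F\left( f\right) ^{\circ k}\left( v\right) $, the Leibniz dichotomy (i)/(ii), and the restriction to $\{\pi _{0}\left( y\right) =1\}$ --- which is legitimate because the direction $F\left( f\right) \left( v_{k+1}\right) \left( y\right) =y\otimes f\left( v_{k+1}\right) \left( \pi _{1}\left( y\right) \right) $ has vanishing $\pi _{0}$-component, hence is tangent to that affine hyperplane, and is in fact needed, since for $k\geq 2$ the identity acquires factors of $\pi _{0}\left( y\right) $ off the hyperplane. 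You should also record, as the paper does, that both sides are linear and continuous in $v$, so arguing on elementary tensors suffices; your derivation of $(\ref{explicit form of levels of ODE})$ by projecting to level $k$ and exchanging the two finite sums, and the conservation $y^{0}\equiv 1$, are fine.

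The genuine gap is the step you explicitly defer, and it is not mere bookkeeping: as you phrase it (``with the correct tensor order'', matched against the ordered-shuffle condition on block \emph{maxima}) it does not come out. In mechanism (ii) the new, largest letter $v_{k+1}$ is inserted into an \emph{arbitrary} existing block while the blocks keep their positions, so the induction delivers the factors ordered, left to right, by decreasing \emph{smallest} letter of each block (the factor containing $v_{1}$ is always rightmost). By contrast, $OS\left( j_{1},\dots ,j_{n}\right) $ together with the convention that $f^{\circ j_{n}}$ eats the leftmost letters of $\sigma \left( v\right) $ forces the largest letter into the leftmost factor. Concretely, at $k=3$ the induction (equivalently, a direct computation from Definition \ref{Definition of f circ k}) produces the summand $f\left( v_{2}\right) \otimes f^{\circ 2}\left( v_{3}\otimes v_{1}\right) \left( Id_{\mathcal{U}}\right) $, whereas the sum over $OS\left( 1,2\right) $ contributes $f^{\circ 2}\left( v_{3}\otimes v_{1}\right) \left( Id_{\mathcal{U}}\right) \otimes f\left( v_{2}\right) $; these are different elements of $\mathcal{U}^{\otimes 2}$. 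So the bijection you need is with compositions whose blocks are ordered by their minimal letters (equivalently, impose the chain condition on the \emph{first} position of each block, $\sigma \left( 1\right) <\sigma \left( j_{1}+1\right) <\cdots $), and with that indexing the inductive step closes cleanly; the residual discrepancy with the displayed form of $(\ref{explicit form of Fcirck})$ is a pure ordering convention and is harmless for everything downstream, since only the norms of the summands are used and the admissible norm is invariant under permutations. Until this re-indexing (or an equivalent explicit bijection) is written out, the combinatorial core of the lemma --- which is its actual content --- remains unproved in your proposal.
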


\begin{proof}
$\left( \ref{explicit form of Fcirck}\right) $ can be proved by using
mathematical induction when $v=v_{k}\otimes \cdots \otimes v_{1}$ for $%
\left\{ v_{j}\right\} _{j=1}^{k}\subset \mathcal{V}$. Then by using
linearity and continuity (in $\mathcal{V}^{\otimes k}$), $\left( \ref%
{explicit form of Fcirck}\right) $ holds for any $v\in \mathcal{V}^{\otimes
k}$. $\left( \ref{explicit form of levels of ODE}\right) $ follows from $%
\left( \ref{explicit form of Fcirck}\right) $.
\end{proof}

\begin{lemma}
\label{Lemma explicit Euler expansion}For $\gamma >p\geq 1$, suppose $f\in
L\left( \mathcal{V},C^{\gamma }\left( \mathcal{U},\mathcal{U}\right) \right) 
$, $g,h\in G^{\left[ p\right] }\left( \mathcal{V}\right) $ and $\xi \in L^{%
\left[ p\right] }\left( \mathcal{V}\right) $. Let $y:\left[ 0,2\right]
\rightarrow L^{\left[ p\right] }\left( \mathcal{U}\right) $ be the solution
to the ordinary differential equation:%
\begin{equation}
dy_{u}=\left\{ 
\begin{array}{cc}
\sum_{k=1}^{\left[ p\right] }F\left( f\right) ^{\circ k}\pi _{k}\left( \log
g\right) \left( Id_{L^{n}\left( \mathcal{U}\right) }\right) \left(
y_{u}\right) du, & u\in \left[ 0,1\right] \\ 
\sum_{k=1}^{\left[ p\right] }F\left( f\right) ^{\circ k}\pi _{k}\left( \log
h\right) \left( Id_{L^{n}\left( \mathcal{U}\right) }\right) \left(
y_{u}\right) du, & u\in \left[ 1,2\right]%
\end{array}%
\right. \text{, }y_{0}=\xi \text{.}
\label{ODE in lemma of explicit euler expansion}
\end{equation}%
For $j_{1}+\cdots +j_{n}\leq \left[ p\right] $, $j_{i}\geq 1$, denote two
mappings in $C^{1+\left\{ \gamma \right\} ,loc}\left( L^{\left[ p\right]
}\left( \mathcal{U}\right) ,L^{\left[ p\right] }\left( \mathcal{U}\right)
\right) $ by 
\begin{gather*}
F\left( \log g\right) ^{\left( j_{1},\cdots ,j_{n}\right) }:=F\left(
f\right) ^{\circ \left( j_{1}+\cdots +j_{n}\right) }\left( \pi
_{j_{1}}\left( \log g\right) \otimes \cdots \otimes \pi _{j_{n}}\left( \log
g\right) \right) \left( Id_{L^{n}\left( \mathcal{U}\right) }\right) \text{,}
\\
F\left( \left( \log g\right) ^{\left( j_{1},\cdots ,j_{n-1}\right)
}h^{\left( j_{n}\right) }\right) :=F\left( f\right) ^{\circ \left(
j_{1}+\cdots +j_{n}\right) }\left( \pi _{j_{1}}\left( \log g\right) \otimes
\cdots \otimes \pi _{j_{n-1}}\left( \log g\right) \otimes \pi _{j_{n}}\left(
h\right) \right) \left( Id_{L^{n}\left( \mathcal{U}\right) }\right) \text{.}
\end{gather*}%
Then we have 
\begin{eqnarray*}
&&y_{2}-\xi -\sum_{k=1}^{\left[ p\right] }F\left( f\right) ^{\circ k}\pi
_{k}\left( g\otimes h\right) \left( Id_{L^{n}\left( \mathcal{U}\right)
}\right) \left( \xi \right) \\
&=&\sum_{\substack{ j_{1}+\cdots +j_{n}=\left[ p\right]  \\ j_{i}\geq 1}}%
\,\,\idotsint\limits_{1<u_{1}<\cdots <u_{n}<2}\left( F\left( \log h\right)
^{\left( j_{1},\cdots ,j_{n}\right) }\left( y_{u_{1}}\right) -F\left( \log
h\right) ^{\left( j_{1},\cdots ,j_{n}\right) }\left( y_{1}\right) \right)
du_{1}\cdots du_{n} \\
&&+\sum_{\substack{ j_{1}+\cdots +j_{n}=\left[ p\right]  \\ j_{i}\geq 1}}%
\,\,\idotsint\limits_{0<u_{1}<\cdots <u_{n}<1}\left( F\left( \log g\right)
^{\left( j_{1},\cdots ,j_{n}\right) }\left( y_{u_{1}}\right) -F\left( \log
g\right) ^{\left( j_{1},\cdots ,j_{n}\right) }\left( \xi \right) \right)
du_{1}\cdots du_{n} \\
&&+\sum_{\substack{ j_{1}+\cdots +j_{n}=\left[ p\right]  \\ j_{i}\geq
1,n\geq 1}}\,\,\idotsint\limits_{0<u_{1}<\cdots <u_{n}<1}\left( F\left(
\left( \log g\right) ^{\left( j_{1},\cdots ,j_{n-1}\right) }h^{\left(
j_{n}\right) }\right) \left( y_{u_{1}}\right) -F\left( \left( \log g\right)
^{\left( j_{1},\cdots ,j_{n-1}\right) }h^{\left( j_{n}\right) }\right)
\left( \xi \right) \right) du_{1}\cdots du_{n}
\end{eqnarray*}%
\begin{eqnarray*}
&&+\sum_{\substack{ j_{1}+\cdots +j_{n}\leq \left[ p\right] -1  \\ %
j_{1}+\cdots +j_{n+1}\geq \left[ p\right] +1  \\ j_{i}=1,\dots ,\left[ p%
\right] ,n\geq 1}}\,\,\idotsint\limits_{1<u_{1}<\cdots <u_{n}<2}D\left(
F\left( \log h\right) ^{\left( j_{1},\cdots ,j_{n}\right) }\right) \left(
y_{u_{1}}\right) F\left( \log h\right) ^{\left( j_{n+1}\right) }\left(
y_{u_{1}}\right) du_{1}\cdots du_{n+1} \\
&&+\sum_{\substack{ j_{1}+\cdots +j_{n}\leq \left[ p\right] -1  \\ %
j_{1}+\cdots +j_{n+1}\geq \left[ p\right] +1  \\ j_{i}=1,\dots ,\left[ p%
\right] ,n\geq 1}}\,\,\idotsint\limits_{0<u_{1}<\cdots <u_{n}<1}D\left(
F\left( \log g\right) ^{\left( j_{1},\cdots ,j_{n}\right) }\right) \left(
y_{u_{1}}\right) F\left( \log g\right) ^{\left( j_{n+1}\right) }\left(
y_{u_{1}}\right) du_{1}\cdots du_{n+1} \\
&&+\sum_{\substack{ j_{1}+\cdots +j_{n}\leq \left[ p\right] -1  \\ %
j_{1}+\cdots +j_{n+1}\geq \left[ p\right] +1  \\ j_{i}=1,\dots ,\left[ p%
\right] ,n\geq 1}}\,\,\idotsint\limits_{0<u_{1}<\cdots <u_{n}<1}D\left(
F\left( \left( \log g\right) ^{\left( j_{1},\cdots ,j_{n-1}\right)
}h^{\left( j_{n}\right) }\right) \right) \left( y_{u_{1}}\right) F\left(
\log g\right) ^{\left( j_{n+1}\right) }\left( y_{u_{1}}\right) du_{1}\cdots
du_{n+1}
\end{eqnarray*}
\end{lemma}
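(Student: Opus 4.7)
The plan is to apply the fundamental theorem of calculus iteratively on the two sub-intervals $[0,1]$ and $[1,2]$, freezing the base-point at each stage, and then to identify the resulting ``frozen'' leading part with $\sum_{k=1}^{\left[ p\right] } F(f)^{\circ k}\pi_k(g\otimes h)(Id)(\xi)$.

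First I would split $y_2 - \xi = (y_2 - y_1) + (y_1 - \xi)$. On the $h$-interval, writing $V_h := \sum_{k=1}^{\left[ p\right] } F(f)^{\circ k}\pi_k(\log h)(Id)$, I would iterate the identities
\begin{equation*}
y_2 - y_1 = \int_1^2 V_h(y_{u_1})\, du_1, \qquad V_h(y_{u_1}) - V_h(y_1) = \int_1^{u_1} DV_h(y_{u_2})\, V_h(y_{u_2})\, du_2,
\end{equation*}
and similarly for each further composition of $V_h$'s with itself. At each stage I would expand $V_h = \sum_{j\geq 1} F(f)^{\circ j}\pi_j(\log h)(Id)$ into indexed pieces, so that branches are labelled by tuples $(j_1,\dots,j_n)$. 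I would then stop a given branch in one of two ways: either when $j_1+\cdots+j_n = \left[ p\right] $, in which case I peel the frozen value $F(\log h)^{(j_1,\dots,j_n)}(y_1)/n!$ off and keep the running-minus-frozen remainder, producing the first displayed sum in the statement; or when $j_1+\cdots+j_n \le \left[ p\right] -1$ while $j_1+\cdots+j_{n+1}\ge \left[ p\right] +1$, in which case the whole integral $\int D F(\log h)^{(j_1,\dots,j_n)}\, F(\log h)^{(j_{n+1})}(y_{u_1})\,du$ gets retained as the fourth displayed sum.

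Running exactly the same expansion on $[0,1]$ with $V_g$ in place of $V_h$, frozen at $\xi$, produces the second and fifth remainder sums, together with a leading frozen contribution built from $\log g$. The frozen $h$-leading part is still evaluated at $y_1$ rather than $\xi$; to base-change it, I would apply the iteration to $F(f)^{\circ k}\pi_k(h)(Id)(y_1) - F(f)^{\circ k}\pi_k(h)(Id)(\xi) = \int_0^1 D(\cdot)(y_u)\, V_g(y_u)\, du$, this time iterating in $V_g$ with the same two stopping rules. The frozen mixed terms that emerge are exactly $F((\log g)^{(j_1,\dots,j_{n-1})} h^{(j_n)})(\xi)$, and the two kinds of surpluses form the third and sixth sums in the statement.

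At that point the left-hand side has been reduced to the six explicit integral remainders plus a purely algebraic frozen quantity at $\xi$. The closing step is to verify that this frozen quantity equals $\sum_{k=1}^{\left[ p\right] } F(f)^{\circ k}\pi_k(g\otimes h)(Id)(\xi)$. Using $\pi_k(g\otimes h) = \sum_{j=0}^{k}\pi_j(g)\otimes\pi_{k-j}(h)$, together with $g=\exp(\log g)$ and $h=\exp(\log h)$ so that each $\pi_j(g)$ and $\pi_{k-j}(h)$ expands as a symmetric sum of $\log$-tensors weighted by $1/n!$, and then invoking Lemma \ref{Lemma explicit form of Fcirck} to rewrite each $F(f)^{\circ k}$ as an ordered-shuffle sum, the three frozen families ($\log g$-only, $\log h$-only, and mixed $\log g$-then-$h$) should partition the tuples contributing to $\pi_k(g\otimes h)$ exactly once. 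The hardest part of the argument will be precisely this combinatorial identification: tracking the interaction between the ordered-shuffle factors from Lemma \ref{Lemma explicit form of Fcirck} and the time-ordered $1/n!$ factors, and checking that every multi-index appearing in $\pi_k(g\otimes h)$ is accounted for by exactly one frozen family. Once the matching is secured, the cancellation with the subtracted $\sum_{k}F(f)^{\circ k}\pi_k(g\otimes h)(Id)(\xi)$ is automatic, leaving precisely the six integral remainders claimed in the lemma.
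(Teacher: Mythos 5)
Your plan is essentially the paper's own proof: iterate the fundamental theorem of calculus along the two legs of the ODE with exactly the two stopping rules (total level $=\left[ p\right]$ gives the running-minus-frozen remainders, overshoot gives the derivative-form remainders), base-change the $h$-frozen terms from $y_{1}$ to $\xi $ along the $g$-leg to create the mixed family, and recombine the frozen terms via $g=\exp \left( \log g\right) $, $h=\exp \left( \log h\right) $ and $\pi _{k}\left( g\otimes h\right) =\sum_{j}\pi _{j}\left( g\right) \otimes \pi _{k-j}\left( h\right) $; the paper does precisely this, delegating the expansion to Lemmas 21--22 of Boutaib et al. The one ingredient you should state explicitly (the paper cites it as the key fact) is the identity $D\bigl( F\left( f\right) ^{\circ n}\left( v\right) \left( Id\right) \bigr) \bigl( F\left( f\right) ^{\circ k}\pi _{k}\left( \log h\right) \left( Id\right) \bigr) =F\left( f\right) ^{\circ \left( k+n\right) }\left( \pi _{k}\left( \log h\right) \otimes v\right) \left( Id\right) $, without which the naive chain-rule iteration only yields iterated derivatives of the vector field rather than branches relabelled by tuples $F\left( \log h\right) ^{\left( j_{1},\dots ,j_{n}\right) }$; by contrast, the final matching you single out as the hardest part needs no ordered-shuffle bookkeeping at all, since once the frozen families carry the simplex weights $1/n!$ it follows from linearity of $F\left( f\right) ^{\circ k}$ in its tensor argument together with the exponential identities above.
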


\begin{proof}
Based on Lemma $21$ in \cite{Youness} (whose proof applies to locally
Lipschitz vector fields), $\left\{ F\left( f\right) ^{\circ k}\pi _{k}\left(
\log h\right) \right\} _{k=1}^{\left[ p\right] }$ are first order
differential operators, and for integer $n\leq \left[ p\right] -k$ and any $%
v\in \mathcal{V}^{\otimes n}$,%
\begin{equation*}
D\left( F\left( f\right) ^{\circ n}\left( v\right) \left( Id_{L^{n}\left( 
\mathcal{U}\right) }\right) \right) \left( F\left( f\right) ^{\circ k}\pi
_{k}\left( \log h\right) \left( Id_{L^{n}\left( \mathcal{U}\right) }\right)
\right) =F\left( f\right) ^{\circ \left( k+n\right) }\left( \pi _{k}\left(
\log h\right) \otimes v\right) \left( Id_{L^{n}\left( \mathcal{U}\right)
}\right) \text{.}
\end{equation*}%
Then by subtraction and using the fact that $y$ is the solution to the ODE $%
\left( \ref{ODE in lemma of explicit euler expansion}\right) $ (or see Lemma 
$22$ in \cite{Youness}), we get%
\begin{eqnarray*}
&&y_{2}-y_{1}-\sum_{k=1}^{\left[ p\right] }F\left( f\right) ^{\circ k}\pi
_{k}\left( h\right) \left( Id_{L^{\left[ p\right] }\left( \mathcal{U}\right)
}\right) \left( y_{1}\right) \\
&=&\sum_{\substack{ j_{1}+\cdots +j_{n}=\left[ p\right]  \\ j_{i}\geq 1}}%
\,\,\idotsint\limits_{1<u_{1}<\cdots <u_{n}<2}\left( F\left( \log h\right)
^{\left( j_{1},\cdots ,j_{n}\right) }\left( y_{u_{1}}\right) -F\left( \log
h\right) ^{\left( j_{1},\cdots ,j_{n}\right) }\left( y_{1}\right) \right)
du_{1}\cdots du_{n} \\
&&+\sum_{\substack{ j_{1}+\cdots +j_{n}\leq \left[ p\right] -1  \\ %
j_{1}+\cdots +j_{n+1}\geq \left[ p\right] +1  \\ j_{i}=1,\dots ,\left[ p%
\right] ,n\geq 1}}\,\idotsint\limits_{1<u_{1}<\cdots <u_{n+1}<2}D\left(
F\left( \log h\right) ^{\left( j_{1},\cdots ,j_{n}\right) }\right) \left(
y_{u_{1}}\right) F\left( \log h\right) ^{\left( j_{n+1}\right) }\left(
y_{u_{1}}\right) du_{1}\cdots du_{n+1}\text{.}
\end{eqnarray*}%
Similar estimate applies to $y_{1}$ and $F\left( f\right) ^{\circ k}\pi
_{k}\left( h\right) \left( Id_{L^{\left[ p\right] }\left( \mathcal{U}\right)
}\right) \left( y_{1}\right) $, $k=1,2,\dots ,\left[ p\right] $.
\end{proof}

\begin{lemma}
\label{Lemma simple property of solution of ODE}For $\gamma >p\geq 1$,
suppose $f\in L\left( \mathcal{V},C^{\gamma }\left( \mathcal{U},\mathcal{U}%
\right) \right) $ and $g\in G^{\left[ p\right] }\left( \mathcal{V}\right) $.
Let $y:\left[ 0,1\right] \rightarrow L^{\left[ p\right] }\left( \mathcal{U}%
\right) $ be the solution to the ordinary differential equation%
\begin{equation}
dy_{u}=\sum_{k=1}^{\left[ p\right] }F\left( f\right) ^{\circ k}\pi
_{k}\left( \log g\right) \left( Id_{L^{\left[ p\right] }\left( \mathcal{U}%
\right) }\right) \left( y_{u}\right) du\text{, }u\in \left[ 0,1\right] \text{%
, }y_{0}=1\text{.}  \label{ODE solution in Lemma}
\end{equation}%
Then $y$ takes value in $G^{\left[ p\right] }\left( \mathcal{U}\right) $,
and when $\left\vert f\right\vert _{\gamma }=1$ and $\left\vert \!\left\vert
\!\left\vert g\right\vert \!\right\vert \!\right\vert \leq 1$, we have%
\begin{equation}
\left\Vert \pi _{k}\left( y\right) \right\Vert _{1-var,\left[ 0,1\right]
}\leq C_{p}\left\vert \!\left\vert \!\left\vert g\right\vert \!\right\vert
\!\right\vert ^{k}\text{, }k=1,2,\dots ,\left[ p\right] \text{.}
\label{bound on y on small interval}
\end{equation}
\end{lemma}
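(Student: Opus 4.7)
The plan is to split the proof into three tasks: (i) well-posedness of the ODE solution on $\left[0,1\right]$; (ii) the variation estimate $\left(\ref{bound on y on small interval}\right)$; and (iii) the group-preservation claim $y_{u}\in G^{\left[p\right]}\left(\mathcal{U}\right)$.

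For (i), the formula $\left(\ref{explicit form of Fcirck}\right)$ factors the driving vector field $V(y):=\sum_{k=1}^{\left[p\right]}F(f)^{\circ k}\pi_{k}(\log g)(Id_{L^{\left[p\right]}(\mathcal{U})})(y)$ as $V(y)=y\otimes\Phi\left(\pi_{1}(y)\right)$, with $\Phi\colon\mathcal{U}\to L^{\left[p\right]-1}(\mathcal{U})$ bounded and locally Lipschitz (since $f\in L(\mathcal{V},C^{\gamma}(\mathcal{U},\mathcal{U}))$ with $\gamma\geq 1$). Hence $V$ is locally Lipschitz with linear growth in $y$, and Picard--Lindel\"of gives a unique global solution on $\left[0,1\right]$.

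For (ii), I would induct on $k$ using the explicit recursion $\left(\ref{explicit form of levels of ODE}\right)$. The standing estimates are $\left\Vert\pi_{l}(\log g)\right\Vert\leq C_{p}\left\vert\!\left\vert\!\left\vert g\right\vert\!\right\vert\!\right\vert^{l}$ (from the log series and admissibility of the tensor norm) and $\left\Vert f^{\circ i}(v)(Id_{\mathcal{U}})\right\Vert_{\infty}\leq C_{p}\left\vert f\right\vert_{\gamma}^{i}\left\Vert v\right\Vert$ (Definition \ref{Definition of f circ k}). Combined with the isometry of $\sigma\in S_{l}$ on $\mathcal{V}^{\otimes l}$, the integrand at level $k$ is bounded by $C_{p}\left\Vert\pi_{k-j}(y_{u})\right\Vert\left\vert\!\left\vert\!\left\vert g\right\vert\!\right\vert\!\right\vert^{l}$ for contributions of order $l\geq j$; the inductive hypothesis $\left\Vert\pi_{k-j}(y_{u})\right\Vert\leq C_{p}\left\vert\!\left\vert\!\left\vert g\right\vert\!\right\vert\!\right\vert^{k-j}$, together with $\left\vert f\right\vert_{\gamma}=1$ and $\left\vert\!\left\vert\!\left\vert g\right\vert\!\right\vert\!\right\vert\leq 1$ (to absorb all higher powers into $\left\vert\!\left\vert\!\left\vert g\right\vert\!\right\vert\!\right\vert^{k}$), yields $\left\Vert\pi_{k}(y)\right\Vert_{1-var,\left[0,1\right]}\leq C_{p}\left\vert\!\left\vert\!\left\vert g\right\vert\!\right\vert\!\right\vert^{k}$ after integration.

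Part (iii) is the main obstacle. I would argue that $V$ is tangent to the closed Lie subgroup $G^{\left[p\right]}(\mathcal{U})\subset T^{\left[p\right]}(\mathcal{U})$ everywhere on that subgroup; since $y_{0}=1\in G^{\left[p\right]}(\mathcal{U})$, a flow along a tangent field must keep $y_{u}$ in the subgroup for every $u\in\left[0,1\right]$. Using $\left(\ref{explicit form of Fcirck}\right)$, tangency at $y\in G^{\left[p\right]}(\mathcal{U})$ reduces to the algebraic claim that for every Lie element $l\in\left[\mathcal{V}\right]^{k}$ and every $u\in\mathcal{U}$,
\[
\sum_{\substack{j_{1}+\cdots+j_{n}=k\\ j_{i}\geq 1}}\,\,\sum_{\sigma\in OS(j_{1},\ldots,j_{n})}\bigl(f^{\circ j_{n}}\otimes\cdots\otimes f^{\circ j_{1}}\bigr)\!\left(\sigma(l)\right)\!\left(Id_{\mathcal{U}}\right)\!(u)\in\left[\mathcal{U}\right]^{n},
\]
an ordered-shuffle/Eulerian identity on the free Lie algebra that I would verify first on bracketed monomials in generators of $\mathcal{V}$ (where the Dynkin--Specht--Wever machinery applies directly) and extend by continuity and linearity. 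If this direct algebraic route proves fragile in the Banach setting, my fallback is approximation: write $\log g$ as a limit of Lie polynomials supported in finite-dimensional subspaces $\mathcal{V}_{n}\subset\mathcal{V}$; realize the resulting $g_{n}$ as step-$\left[p\right]$ signatures of BV paths via Rashevskii--Chow inside $\mathcal{V}_{n}$; invoke Lemma \ref{Lemma explicit Euler expansion} together with Chen's theorem to locate each $y^{n}$ inside $G^{\left[p\right]}(\mathcal{U})$; and pass to the limit using continuity of the ODE solution in its coefficients and closedness of $G^{\left[p\right]}(\mathcal{U})$.
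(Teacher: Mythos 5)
Your parts (i) and (ii) are sound and essentially reproduce the paper's argument: the $1$-variation bound is obtained by induction on the level $k$ through the recursion $\left( \ref{explicit form of levels of ODE}\right) $ of Lemma \ref{Lemma explicit form of Fcirck}, using the isometry of the symmetric group action and $\left\vert f\right\vert _{\gamma }=1$, $\left\vert \!\left\vert \!\left\vert g\right\vert \!\right\vert \!\right\vert \leq 1$ to absorb higher powers. The problem is part (iii), which is the real content of the lemma, and there your proposal has a genuine gap. The algebraic claim you isolate (that for a Lie element $l\in \left[ \mathcal{V}\right] ^{k}$ each graded component of the ordered-shuffle sum lands in $\left[ \mathcal{U}\right] ^{n}$) is correct, but it is not a direct application of Dynkin--Specht--Wever: the map $v\mapsto f\left( v\right) $ is not a Lie-algebra morphism and the operators $f^{\circ j}$ involve derivatives of $f$, so the statement is not a formal identity in a free Lie algebra; proving it is exactly the work the paper does, by introducing the iterated commutators $\left[ F\left( f\right) \right] ^{\circ k}$ of the first-order operators $F\left( f\right) ^{\circ 1}\left( v\right) $, showing inductively that they act as $y\mapsto y\otimes \left( \text{Lie polynomial in }\mathcal{U}\text{ evaluated at }\pi _{1}\left( y\right) \right) $, and using $F\left( f\right) ^{\circ k}\left[ v_{k},\dots ,\left[ v_{2},v_{1}\right] \dots \right] =\left[ F\left( f\right) \right] ^{\circ k}\left( v_{k}\otimes \cdots \otimes v_{1}\right) $. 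Your proposal asserts this identity but supplies no mechanism for it. Moreover, ``a flow along a tangent field stays in the closed subgroup'' is itself not automatic in a Banach setting; the paper concludes by rewriting the ODE as $dy_{u}=y_{u}\otimes d\gamma _{u}$ with $\gamma $ a differentiable path in $\left[ \mathcal{U}\right] ^{1}\oplus \cdots \oplus \left[ \mathcal{U}\right] ^{\left[ p\right] }$, from which membership in $G^{\left[ p\right] }\left( \mathcal{U}\right) $ follows; that reduction is what makes the invariance argument rigorous and should be stated.

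The fallback route does not repair this. Realizing an approximating $g_{n}$ (Lie polynomial in a finite-dimensional $\mathcal{V}_{n}$) as the step-$\left[ p\right] $ signature of a bounded-variation path and invoking Chen's theorem controls the group element attached to the flow driven by that path, i.e.\ the solution of $dz=z\otimes f\left( dx^{n}\right) \left( \pi _{1}\left( z\right) \right) $; it does not identify the solution of the log-ODE $\left( \ref{ODE solution in Lemma}\right) $, which differs from that flow by the Euler-type remainder of Lemma \ref{Lemma explicit Euler expansion}. So this argument would only place $y_{1}^{n}$ within an error of order $\left\vert \!\left\vert \!\left\vert g_{n}\right\vert \!\right\vert \!\right\vert ^{\left[ p\right] +1}$ of $G^{\left[ p\right] }\left( \mathcal{U}\right) $, not inside it; closing that gap by appealing to uniqueness of RDE solutions would be circular, since the present lemma is part of the machinery used to prove the main theorem.
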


\begin{proof}
Based on $\left( \ref{explicit form of levels of ODE}\right) $ in Lemma \ref%
{Lemma explicit form of Fcirck}, if we denote $y^{k}:=\pi _{k}\left(
y\right) $, then it can be proved inductively that%
\begin{equation}
\sup_{t\in \left[ 0,1\right] }\left\Vert y_{t}^{k}\right\Vert \leq
C_{p}\left\vert \!\left\vert \!\left\vert g\right\vert \!\right\vert
\!\right\vert ^{k}\text{, }k=0,1,\dots ,\left[ p\right] \text{.}
\label{inner bound on y}
\end{equation}%
Indeed, $\left( \ref{inner bound on y}\right) $ holds clearly when $k=0$.
Then by using $\left\Vert \sigma \left( \pi _{l}\left( \log g\right) \right)
\right\Vert =\left\Vert \pi _{l}\left( \log g\right) \right\Vert $ (tensor
norm is symmetric as at $\left( \ref{tensor norm is symmetric}\right) $),
for $k=1,\dots ,\left[ p\right] $, (since $\left\vert \!\left\vert
\!\left\vert g\right\vert \!\right\vert \!\right\vert \leq 1$, $\left\vert
f\right\vert _{\gamma }=1$)%
\begin{equation*}
\sup_{t\in \left[ 0,1\right] }\left\Vert y_{t}^{k}\right\Vert \leq
C_{p}\sum_{j=1}^{k}\sup_{u\in \left[ 0,1\right] }\left\Vert
y_{u}^{k-j}\right\Vert \left( \sum_{l=j}^{\left[ p\right] }\left\vert
\!\left\vert \!\left\vert g\right\vert \!\right\vert \!\right\vert
^{l}\right) \leq C_{p}\left\vert \!\left\vert \!\left\vert g\right\vert
\!\right\vert \!\right\vert ^{k}\text{.}
\end{equation*}

Then we prove that $y$ takes value in $G^{\left[ p\right] }\left( \mathcal{U}%
\right) $. For $i=1,2$, let $\mathcal{V}^{i}$ be Banach spaces, and $%
F^{i}\in L\left( \mathcal{V}^{i},\mathcal{D}^{k_{i}}\left( L^{\left[ p\right]
}\left( \mathcal{U}\right) \right) \right) $. Denote by $\left[ F^{2},F^{1}%
\right] \in L\left( \mathcal{V}^{2}\otimes \mathcal{V}^{1},\mathcal{D}%
^{1}\left( L^{\left[ p\right] }\left( \mathcal{U}\right) \right) \right) $
the unique continuous linear operator which satisfies%
\begin{gather}
\left[ F^{2},F^{1}\right] \left( v^{2}\otimes v^{1}\right) \left( r\right)
:=\left( Dr\right) \left( F^{2}\left( v_{2}\right) \circ F^{1}\left(
v_{1}\right) -F^{1}\left( v_{1}\right) \circ F^{2}\left( v_{2}\right)
\right) \left( Id_{L^{\left[ p\right] }\left( \mathcal{U}\right) }\right) 
\text{, }  \label{inner definition of [F1,F2]} \\
\forall v^{1}\in \mathcal{V}^{1}\text{, }\forall v^{2}\in \mathcal{V}^{2}%
\text{, }\forall r\in C^{1,loc}\left( L^{\left[ p\right] }\left( \mathcal{U}%
\right) ,L^{\left[ p\right] }\left( \mathcal{U}\right) \right) \text{.} 
\notag
\end{gather}%
For integer $k=1,\dots ,\left[ p\right] $, with $F\left( f\right) ^{\circ
1}\in L\left( \mathcal{V},\mathcal{D}^{1}\left( L^{\left[ p\right] }\left( 
\mathcal{U}\right) \right) \right) $ defined at $\left( \ref{definition of
Fcirc1}\right) $ (on p\pageref{definition of Fcirc1}), we define 
\begin{equation*}
\left[ F\left( f\right) \right] ^{\circ 1}:=F\left( f\right) ^{\circ 1}\text{
and }\left[ F\left( f\right) \right] ^{\circ \left( k+1\right) }:=\left[
F\left( f\right) ^{\circ 1},\left[ F\left( f\right) \right] ^{\circ k}\right]
\text{.}
\end{equation*}%
Then based on Lemma 21 in \cite{Youness} (whose proof applies to locally
Lipschitz vector fields), for $k=1,\dots ,\left[ p\right] $, $\left\{
v_{i}\right\} _{i=1}^{k}\subset \mathcal{V}$ and $r\in C^{1,loc}\left( L^{%
\left[ p\right] }\left( \mathcal{U}\right) ,L^{\left[ p\right] }\left( 
\mathcal{U}\right) \right) $,%
\begin{eqnarray}
F\left( f\right) ^{\circ k}\left[ v_{k},\dots ,\left[ v_{2},v_{1}\right]
\dots \right] \left( r\right) &=&\left[ F\left( f\right) \right] ^{\circ
k}\left( v_{k}\otimes \cdots \otimes v_{1}\right) \left( r\right)
\label{inner relationship between F(f) and [F(f)]} \\
&=&\left( Dr\right) \left( \left[ F\left( f\right) \right] ^{\circ k}\left(
v_{k}\otimes \cdots \otimes v_{1}\right) \left( Id_{L^{\left[ p\right]
}\left( \mathcal{U}\right) }\right) \right) \text{.}  \notag
\end{eqnarray}%
We want to prove that, for $k=2,\dots ,\left[ p\right] $, there exist $%
\{G_{i}^{s,j,k}\}\subset C^{1}\left( \mathcal{U},\mathcal{U}\right) $, such
that, for any $y\in L^{\left[ p\right] }\left( \mathcal{U}\right) $, 
\begin{eqnarray}
&&\left[ F\left( f\right) \right] ^{\circ k}\left( v_{k}\otimes \cdots
\otimes v_{1}\right) \left( Id_{L^{\left[ p\right] }\left( \mathcal{U}%
\right) }\right) \left( y\right)
\label{inner inductive hypothesis of being Lie} \\
&=&y\otimes \left( \sum_{j=2}^{k}\sum_{s=1}^{l_{j}}\left[ G_{j}^{s,j,k},%
\dots ,\left[ G_{2}^{s,j,k},G_{1}^{s,j,k}\right] \dots \right] \left( \pi
_{1}\left( y\right) \right) +f^{\circ k}\left( \left[ v_{k},\dots ,\left[
v_{2},v_{1}\right] \dots \right] \right) \left( Id_{\mathcal{U}}\right)
\left( \pi _{1}\left( y\right) \right) \right) \text{.}  \notag
\end{eqnarray}%
When $k=2$, by using the definition of $\left[ F\left( f\right) \right]
^{\circ 2}$, we have%
\begin{eqnarray*}
&&\left[ F\left( f\right) \right] ^{\circ 2}\left( v_{2}\otimes v_{1}\right)
\left( Id_{L^{\left[ p\right] }\left( \mathcal{U}\right) }\right) \left(
y\right) \\
&=&\left( \left( F\left( f\right) ^{\circ 1}\left( v_{2}\right) \right)
\circ \left( F\left( f\right) ^{\circ 1}\left( v_{1}\right) \right) -\left(
F\left( f\right) ^{\circ 1}\left( v_{1}\right) \right) \circ \left( F\left(
f\right) ^{\circ 1}\left( v_{2}\right) \right) \right) \left( Id_{L^{\left[ p%
\right] }\left( \mathcal{U}\right) }\right) \left( y\right) \\
&=&y\otimes \left( \left( f\left( v_{2}\right) \left( \pi _{1}\left(
y\right) \right) \right) \otimes \left( f\left( v_{1}\right) \left( \pi
_{1}\left( y\right) \right) \right) -\left( f\left( v_{1}\right) \left( \pi
_{1}\left( y\right) \right) \right) \otimes \left( f\left( v_{2}\right)
\left( \pi _{1}\left( y\right) \right) \right) \right) \\
&&+y\otimes \left( f^{\circ 2}\left( v_{2}\otimes v_{1}\right) \left( Id_{%
\mathcal{U}}\right) -f^{\circ 2}\left( v_{1}\otimes v_{2}\right) \left( Id_{%
\mathcal{U}}\right) \right) \\
&=&y\otimes \left( \left[ \left( f^{\circ 1}\left( v_{2}\right) \right)
\left( Id_{\mathcal{U}}\right) ,\left( f^{\circ 1}\left( v_{1}\right)
\right) \left( Id_{\mathcal{U}}\right) \right] \left( \pi _{1}\left(
y\right) \right) +f^{\circ 2}\left( \left[ v_{2},v_{1}\right] \right) \left(
Id_{\mathcal{U}}\right) \left( \pi _{1}\left( y\right) \right) \right) \text{%
.}
\end{eqnarray*}%
Then $\left( \ref{inner inductive hypothesis of being Lie}\right) $ holds
when $k=2$ with $l_{2}=1$ and $G_{i}^{1,2,2}=\left( f^{\circ 1}\left(
v_{i}\right) \right) \left( Id_{\mathcal{U}}\right) $, $i=1,2$. Suppose $%
\left( \ref{inner inductive hypothesis of being Lie}\right) $ holds for $k$.
Then for $k+1$, by using $\left( \ref{inner definition of [F1,F2]}\right) $,
the second equality in $\left( \ref{inner relationship between F(f) and
[F(f)]}\right) $ and inductive hypothesis $\left( \ref{inner inductive
hypothesis of being Lie}\right) $, we have%
\begin{eqnarray*}
&&\left[ F\left( f\right) \right] ^{\circ \left( k+1\right) }\left(
v_{k+1}\otimes v_{k}\otimes \cdots \otimes v_{1}\right) \left( Id_{L^{\left[
p\right] }\left( \mathcal{U}\right) }\right) \left( y\right) \\
&=&\left( \left( F\left( f\right) ^{\circ 1}v_{k+1}\right) \circ \left[
F\left( f\right) \right] ^{\circ k}\left( v_{k}\otimes \cdots \otimes
v_{1}\right) -\left[ F\left( f\right) \right] ^{\circ k}\left( v_{k}\otimes
\cdots \otimes v_{1}\right) \circ \left( F\left( f\right) ^{\circ
1}v_{k+1}\right) \right) \left( Id_{L^{\left[ p\right] }\left( \mathcal{U}%
\right) }\right) \left( y\right) \\
&=&D\left( \left[ F\left( f\right) \right] ^{\circ k}\left( v_{k}\otimes
\cdots \otimes v_{1}\right) \left( Id_{L^{\left[ p\right] }\left( \mathcal{U}%
\right) }\right) \right) \left( F\left( f\right) ^{\circ 1}v_{k+1}\right)
\left( Id_{L^{\left[ p\right] }\left( \mathcal{U}\right) }\right) \left(
y\right) \\
&&-D\left( \left( F\left( f\right) ^{\circ 1}v_{k+1}\right) \left( Id_{L^{ 
\left[ p\right] }\left( \mathcal{U}\right) }\right) \right) \left[ F\left(
f\right) \right] ^{\circ k}\left( v_{k}\otimes \cdots \otimes v_{1}\right)
\left( Id_{L^{\left[ p\right] }\left( \mathcal{U}\right) }\right) \left(
y\right) \\
&=&y\otimes \left( \sum_{j=2}^{k}\sum_{s=1}^{l_{j}}\left[ \left( f^{\circ
1}v_{k+1}\right) \left( Id_{\mathcal{U}}\right) ,\left[ G_{j}^{s,j,k},\dots ,%
\left[ G_{2}^{s,j,k},G_{1}^{s,j,k}\right] \dots \right] \right] \left( \pi
_{1}\left( y\right) \right) \right) \\
&&+y\otimes \left( \sum_{j=2}^{k}\sum_{s=1}^{l_{j}}\sum_{i=1}^{j}\left[
G_{j}^{s,j,k},\dots \left[ \left( DG_{i}^{s,j,k}\right) \left( \left(
f^{\circ 1}v_{k+1}\right) \left( Id_{\mathcal{U}}\right) \right) ,\dots %
\left[ G_{2}^{s,j,k},G_{1}^{s,j,k}\right] \right] \dots \right] \left( \pi
_{1}\left( y\right) \right) \right) \\
&&+y\otimes \left[ \left( f^{\circ 1}v_{k+1}\right) \left( Id_{\mathcal{U}%
}\right) ,f^{\circ k}\left( \left[ v_{k},\dots ,\left[ v_{2},v_{1}\right] %
\right] \right) \left( Id_{\mathcal{U}}\right) \right] \left( \pi _{1}\left(
y\right) \right) \\
&&+y\otimes f^{\circ \left( k+1\right) }\left( \left[ v_{k+1},\left[
v_{k}\dots ,\left[ v_{2},v_{1}\right] \right] \right] \right) \left( Id_{%
\mathcal{U}}\right) \left( \pi _{1}\left( y\right) \right) \text{.}
\end{eqnarray*}%
As a result, by choosing $\{G_{i}^{s,j,k+1}\}$ properly, $\left( \ref{inner
inductive hypothesis of being Lie}\right) $ holds for $k+1$.

Then based on $\left( \ref{inner relationship between F(f) and [F(f)]}%
\right) $ and $\left( \ref{inner inductive hypothesis of being Lie}\right) $%
, we have that, there exists a function $L$ on $\mathcal{U}$ taking values
in Lie polynomials $\left[ \mathcal{U}\right] ^{1}\oplus \cdots \oplus \left[
\mathcal{U}\right] ^{\left[ p\right] }$ (with $\left[ \mathcal{U}\right]
^{n} $ in Definition \ref{Definition of Vtensorn and Vbracketn} on p\pageref%
{Definition of Vtensorn and Vbracketn}), such that the ODE $\left( \ref{ODE
solution in Lemma}\right) $ can be re-written as 
\begin{equation*}
dy_{u}=y_{u}\otimes \left( L\left( \pi _{1}\left( y_{u}\right) \right)
\right) du,\text{ }u\in \left[ 0,1\right] \text{.}
\end{equation*}%
As a result, if we denote 
\begin{equation*}
\gamma _{t}:=\int_{0}^{t}L\left( \pi _{1}\left( y_{u}\right) \right) du\text{%
,}
\end{equation*}%
Then $\gamma $ is differentiable, taking value in Lie polynomials of degree $%
\left[ p\right] $, and%
\begin{equation*}
dy_{u}=y_{u}\otimes d\gamma _{u}\text{, }u\in \left[ 0,1\right] \text{.}
\end{equation*}%
Then it can be checked that $y$ takes values in $G^{\left[ p\right] }\left( 
\mathcal{U}\right) $.
\end{proof}

\begin{lemma}
\label{Lemma Euler expansion of solution of ODE and difference between one
step and two steps}For $\gamma >p\geq 1$, suppose $f\in L\left( \mathcal{V}%
,C^{\gamma }\left( \mathcal{U},\mathcal{U}\right) \right) $ with $\left\vert
f\right\vert _{\gamma }=1$, and $g,h\in G^{\left[ p\right] }\left( \mathcal{U%
}\right) $ satisfying $\left\vert \!\left\vert \!\left\vert g\right\vert
\!\right\vert \!\right\vert \vee \left\vert \!\left\vert \!\left\vert
h\right\vert \!\right\vert \!\right\vert \vee \left\vert \!\left\vert
\!\left\vert g\otimes h\right\vert \!\right\vert \!\right\vert \leq 1$. Let $%
y^{g}:\left[ 0,1\right] \rightarrow G^{\left[ p\right] }\left( \mathcal{U}%
\right) $ and $y^{g,h}:\left[ 0,2\right] \rightarrow G^{\left[ p\right]
}\left( \mathcal{U}\right) $ be the solution to the ODE:%
\begin{equation*}
dy^{g}=\tsum\nolimits_{k=1}^{\left[ p\right] }F\left( f\right) ^{\circ k}\pi
_{k}\left( \log g\right) \left( Id_{L^{\left[ p\right] }\left( \mathcal{U}%
\right) }\right) \left( y^{g}\right) du\text{, }u\in \left[ 0,1\right] \text{%
, \ }y_{0}^{g}=1\text{.}
\end{equation*}%
\begin{equation*}
dy_{u}^{g,h}=\left\{ 
\begin{array}{cc}
\sum_{k=1}^{\left[ p\right] }F\left( f\right) ^{\circ k}\pi _{k}\left( \log
g\right) \left( Id_{L^{\left[ p\right] }\left( \mathcal{U}\right) }\right)
\left( y_{u}^{g,h}\right) du, & u\in \left[ 0,1\right] \\ 
\sum_{k=1}^{\left[ p\right] }F\left( f\right) ^{\circ k}\pi _{k}\left( \log
h\right) \left( Id_{L^{\left[ p\right] }\left( \mathcal{U}\right) }\right)
\left( y_{u}^{g,h}\right) du, & u\in \left[ 1,2\right]%
\end{array}%
\right. \text{, }y_{0}^{g,h}=1\text{.}
\end{equation*}%
Then%
\begin{equation}
\left\Vert y_{1}^{g}-1-\tsum\nolimits_{k=1}^{\left[ p\right] }F\left(
f\right) ^{\circ k}\pi _{k}\left( g\right) \left( Id_{L^{\left[ p\right]
}\left( \mathcal{U}\right) }\right) \left( 1\right) \right\Vert \leq
C_{p}\left\vert \!\left\vert \!\left\vert g\right\vert \!\right\vert
\!\right\vert ^{\left[ p\right] +1}\text{,}  \label{equ 1 in Lemma of ODE}
\end{equation}%
and%
\begin{equation}
\left\Vert y_{2}^{g,h}-y_{1}^{g\otimes h}\right\Vert \leq C_{p}\left(
\left\vert \!\left\vert \!\left\vert g\right\vert \!\right\vert
\!\right\vert \vee \left\vert \!\left\vert \!\left\vert h\right\vert
\!\right\vert \!\right\vert \vee \left\vert \!\left\vert \!\left\vert
g\otimes h\right\vert \!\right\vert \!\right\vert \right) ^{\left[ p\right]
+1}\text{.}  \label{equ 2 in Lemma of ODE}
\end{equation}
\end{lemma}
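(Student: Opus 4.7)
Both inequalities will be derived from Lemma \ref{Lemma explicit Euler expansion} by specializing its inputs and bounding the six integral remainders on its right-hand side. Two ingredients are used throughout: the estimate $\left\Vert \pi_{k}(\log g)\right\Vert \leq C_{p}|||g|||^{k}$, which follows term-by-term from the defining series $\left(\ref{definition of logarithm}\right)$ together with the definition of $|||\cdot|||$ at $\left(\ref{Definition of norm on group}\right)$; and the a priori size/variation estimate on the ODE solution from Lemma \ref{Lemma simple property of solution of ODE}, which in particular gives $\left\Vert y_{u}^{g}-1\right\Vert \leq C_{p}|||g|||$ for $u\in[0,1]$.

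For $\left(\ref{equ 1 in Lemma of ODE}\right)$, I apply Lemma \ref{Lemma explicit Euler expansion} with the given $g$, with $h$ set to the group identity $1\in G^{[p]}(\mathcal{V})$, and with $\xi=1$. Since $\log 1=0$ and $g\otimes 1=g$, the vector field on $[1,2]$ vanishes, so $y_{2}=y_{1}^{g}$, and the left-hand side of that lemma collapses to exactly $y_{1}^{g}-1-\sum_{k=1}^{[p]}F(f)^{\circ k}\pi_{k}(g)(Id_{L^{[p]}(\mathcal{U})})(1)$. On the right-hand side, every remainder whose integrand contains a factor $\pi_{j_{i}}(\log h)$ (the sums supported on $[1,2]$) or $\pi_{j_{n}}(h)$ with $j_{n}\geq 1$ (the sums with the mixed factor $(\log g)^{(\vec j')}h^{(j_{n})}$) vanishes identically; only two sums on $[0,1]$ survive. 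In the first surviving sum, the integrand is a difference $F(\log g)^{(\vec j)}(y_{u})-F(\log g)^{(\vec j)}(1)$ with $\sum j_{i}=[p]$, bounded by $C_{p}|||g|||^{[p]}\cdot\left\Vert y_{u}-1\right\Vert \leq C_{p}|||g|||^{[p]+1}$. In the second, the integrand is a product $D(F(\log g)^{(\vec j)})(y_{u})\,F(\log g)^{(j_{n+1})}(y_{u})$ with $\sum_{i=1}^{n+1}j_{i}\geq[p]+1$, bounded by $C_{p}|||g|||^{\sum_{i=1}^{n+1}j_{i}}\leq C_{p}|||g|||^{[p]+1}$.

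For $\left(\ref{equ 2 in Lemma of ODE}\right)$, I apply Lemma \ref{Lemma explicit Euler expansion} twice. First, directly with the given $g,h$ and $\xi=1$, writing
\[
y_{2}^{g,h}-1-\sum_{k=1}^{[p]}F(f)^{\circ k}\pi_{k}(g\otimes h)(Id_{L^{[p]}(\mathcal{U})})(1)=R_{1},
\]
where $R_{1}$ is the explicit six-fold sum. Second, by applying the just-proved $\left(\ref{equ 1 in Lemma of ODE}\right)$ with $g\otimes h$ in place of $g$, one obtains an analogous identity with remainder $R_{2}$ satisfying $\left\Vert R_{2}\right\Vert \leq C_{p}|||g\otimes h|||^{[p]+1}$. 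Subtracting gives $y_{2}^{g,h}-y_{1}^{g\otimes h}=R_{1}-R_{2}$. I then bound each of the six sums in $R_{1}$ by the same scheme as in the preceding paragraph: every factor $\pi_{j_{i}}(\log g)$, $\pi_{j_{i}}(\log h)$, or $\pi_{j_{n}}(h)$ contributes $|||g|||^{j_{i}}$ or $|||h|||^{j_{i}}$, and the extra factor (either a Lipschitz difference of size $|||g||| \vee |||h|||$ or an extra vector-field evaluation) lifts the total exponent on $|||g|||$ and $|||h|||$ to at least $[p]+1$. Since all three of $|||g|||,|||h|||,|||g\otimes h|||$ are $\leq 1$, each term is dominated by $(|||g|||\vee|||h|||\vee|||g\otimes h|||)^{[p]+1}$, as required.

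The principal technical obstacle is the systematic verification of these integrand bounds across all six families of remainders. One must carefully combine the ordered-shuffle identity $\left(\ref{explicit form of Fcirck}\right)$ with the logarithm series $\left(\ref{definition of logarithm}\right)$ to track how factors of $\pi_{k}(\log g)$ and $\pi_{k}(h)$ propagate through $F(f)^{\circ k}$, and one must use $\gamma>p\geq[p]$ to ensure that $f\in C^{\gamma}$ has enough derivatives so that $F(\log g)^{(\vec j)}$ is Lipschitz (for the difference-type sums) and so that $D(F(\log g)^{(\vec j)})$ exists and is bounded on the bounded region containing the trajectory (for the product-type sums), with analogous requirements for the factors involving $h$.
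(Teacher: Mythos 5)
Your proposal is correct and follows essentially the same route as the paper: both inequalities are obtained from the explicit remainder formula of Lemma \ref{Lemma explicit Euler expansion} together with the a priori bounds of Lemma \ref{Lemma simple property of solution of ODE}, with $\left(\ref{equ 2 in Lemma of ODE}\right)$ deduced by combining the two-step expansion with $\left(\ref{equ 1 in Lemma of ODE}\right)$ applied to $g\otimes h$. Your device of specializing $h=1$ to extract the one-step expansion is only a cosmetic variant of what the paper does implicitly.
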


\begin{proof}
Based on explicit Euler expansion of $y_{1}^{g}$ in Lemma \ref{Lemma
explicit Euler expansion} and $\sup_{u\in \left[ 0,1\right] }\!\!\left\Vert
\pi _{k}\left( y_{u}^{g}\right) \right\Vert \leq C_{p}\left\vert
\!\left\vert \!\left\vert g\right\vert \!\right\vert \!\right\vert ^{k}$ at $%
\left( \ref{bound on y on small interval}\right) $ in Lemma \ref{Lemma
simple property of solution of ODE}, $\left( \ref{equ 1 in Lemma of ODE}%
\right) $ holds; again based on Lemma \ref{Lemma explicit Euler expansion}
and using $\left( \ref{equ 1 in Lemma of ODE}\right) $, $\left( \ref{equ 2
in Lemma of ODE}\right) $ holds.
\end{proof}

\begin{lemma}
\label{Lemma log into exp}For $i=1,2$, suppose $g_{i}\in G^{\left[ p\right]
}\left( \mathcal{V}\right) $ satisfying $\delta :=\left\vert \!\left\vert
\!\left\vert g_{1}\right\vert \!\right\vert \!\right\vert \vee \left\vert
\!\left\vert \!\left\vert g_{2}\right\vert \!\right\vert \!\right\vert \leq
1 $. Then for $\beta \geq 0$,%
\begin{equation*}
\tsum\nolimits_{n=1}^{\left[ p\right] }\delta ^{\left( \beta -n\right) \vee
0}\left\Vert \pi _{n}\left( \log g_{1}\right) -\pi _{n}\left( \log
g_{2}\right) \right\Vert \text{ and }\tsum\nolimits_{n=1}^{\left[ p\right]
}\delta ^{\left( \beta -n\right) \vee 0}\left\Vert \pi _{n}\left(
g_{1}\right) -\pi _{n}\left( g_{2}\right) \right\Vert \text{,}
\end{equation*}%
are equivalent up to a constant $C_{p}$.
\end{lemma}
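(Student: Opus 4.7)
Write $d_{n}:=\|\pi_{n}(\log g_{1})-\pi_{n}(\log g_{2})\|$ and $e_{n}:=\|\pi_{n}(g_{1})-\pi_{n}(g_{2})\|$ for $n=1,\dots,[p]$, and $\alpha_{n}:=\delta^{(\beta-n)\vee 0}$. My first step is to record the one-sided size estimate $\|\pi_{k}(\log g_{i})\|\leq C_{p}\delta^{k}$ and $\|\pi_{k}(g_{i}-1)\|\leq C_{p}\delta^{k}$. The hypothesis $|||g_{i}|||\leq \delta$ directly gives $\|\pi_{k}(g_{i}-1)\|\leq \delta^{k}$; then the defining formula (\ref{definition of logarithm}) together with the multiplicativity of the tensor norm gives the bound for $\log g_{i}$ by expanding $(g-1)^{\otimes j}$ into its graded pieces $\pi_{i_{1}}(g)\otimes\cdots\otimes\pi_{i_{j}}(g)$ with $i_{1}+\cdots+i_{j}=k$.

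Next I would convert each direction of the desired equivalence into a pointwise inequality between $e_{n}$ and $\{d_{k}\}_{k\leq n}$ (and vice versa). Using $g_{i}-1=\sum_{j=1}^{[p]}(\log g_{i})^{\otimes j}/j!$ one projects to level $n$ and writes
\begin{equation*}
\pi_{n}(g_{1}-g_{2})=\sum_{j=1}^{n}\frac{1}{j!}\pi_{n}\!\left((\log g_{1})^{\otimes j}-(\log g_{2})^{\otimes j}\right),
\end{equation*}
and then a standard telescoping identity of the form $a_{1}\otimes\cdots\otimes a_{j}-b_{1}\otimes\cdots\otimes b_{j}=\sum_{\ell}b_{1}\otimes\cdots\otimes(a_{\ell}-b_{\ell})\otimes\cdots\otimes a_{j}$ reduces $\pi_{n}((\log g_{1})^{\otimes j}-(\log g_{2})^{\otimes j})$ to a sum of tensors with exactly one difference factor $\pi_{i_{\ell}}(\log g_{1})-\pi_{i_{\ell}}(\log g_{2})$ and the remaining factors of total degree $n-i_{\ell}$. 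Combined with the size bound from the previous step, this yields
\begin{equation*}
e_{n}\leq C_{p}\sum_{k=1}^{n}\delta^{n-k}d_{k}.
\end{equation*}
Doing exactly the same computation with $\log g_{i}=\sum_{j=1}^{[p]}(-1)^{j+1}(g_{i}-1)^{\otimes j}/j$ gives the symmetric inequality $d_{n}\leq C_{p}\sum_{k=1}^{n}\delta^{n-k}e_{k}$.

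Finally, I would combine these with the weights $\alpha_{n}$. The key monotonicity observation is that for $k\leq n$ one has $\alpha_{n}\delta^{n-k}\leq \alpha_{k}$: the three cases $\beta\leq k$, $k<\beta\leq n$, $n<\beta$ are each routine using $\delta\leq 1$. Swapping the order of summation,
\begin{equation*}
\sum_{n=1}^{[p]}\alpha_{n}e_{n}\leq C_{p}\sum_{k=1}^{[p]}d_{k}\sum_{n=k}^{[p]}\alpha_{n}\delta^{n-k}\leq C_{p}[p]\sum_{k=1}^{[p]}\alpha_{k}d_{k},
\end{equation*}
and symmetrically in the other direction, which is the claimed equivalence.

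The only substantive point is the telescoping/combinatorial bookkeeping in the second step; all remaining work is routine manipulation. I expect the main (minor) obstacle to be checking that the number of index tuples $(i_{1},\dots,i_{j})$ with $i_{1}+\cdots+i_{j}=n$ that contribute can be absorbed into $C_{p}$ uniformly in $n\leq [p]$, which is clear since $[p]$ is fixed.
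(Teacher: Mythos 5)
Your proposal is correct and follows essentially the same route as the paper: expand $\log$ (resp.\ $\exp$) into graded tensor pieces, telescope the difference of tensor products so that exactly one difference factor appears, bound the remaining factors by $\delta^{\text{degree}}$, and use the weight monotonicity $\delta^{(\beta-n)\vee 0+n-k}\leq\delta^{(\beta-k)\vee 0}$ before summing. No gaps.
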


\begin{proof}
For $n=1,2,\dots ,\left[ p\right] $,%
\begin{eqnarray*}
&&\delta ^{\left( \beta -n\right) \vee 0}\left\Vert \pi _{n}\left( \log
g_{1}\right) -\pi _{n}\left( \log g_{2}\right) \right\Vert \\
&\leq &C_{p}\delta ^{\left( \beta -n\right) \vee
0}\tsum\nolimits_{j_{1}+\cdots +j_{l}=n,j_{i}\geq 1}\left\Vert \pi
_{j_{1}}\left( g_{1}\right) \otimes \cdots \otimes \pi _{j_{l}}\left(
g_{1}\right) -\pi _{j_{1}}\left( g_{2}\right) \otimes \cdots \otimes \pi
_{j_{l}}\left( g_{2}\right) \right\Vert \text{.}
\end{eqnarray*}%
Then by using that 
\begin{equation*}
a_{1}\otimes \cdots \otimes a_{l}-b_{1}\otimes \cdots \otimes
b_{l}=\tsum\nolimits_{i=0}^{l-1}a_{1}\otimes \cdots a_{i}\otimes \left(
a_{i+1}-b_{i+1}\right) \otimes b_{i+2}\otimes \cdots \otimes b_{l}\text{,}
\end{equation*}%
we have ($\delta :=\left\vert \!\left\vert \!\left\vert g_{1}\right\vert
\!\right\vert \!\right\vert \vee \left\vert \!\left\vert \!\left\vert
g_{2}\right\vert \!\right\vert \!\right\vert $)%
\begin{equation*}
\tsum\nolimits_{j_{1}+\cdots +j_{l}=n,j_{i}\geq 1}\left\Vert \pi
_{j_{1}}\left( g_{1}\right) \otimes \cdots \otimes \pi _{j_{l}}\left(
g_{1}\right) -\pi _{j_{1}}\left( g_{2}\right) \otimes \cdots \otimes \pi
_{j_{l}}\left( g_{2}\right) \right\Vert \leq
C_{p}\tsum\nolimits_{j=1}^{n}\delta ^{n-j}\left\Vert \pi _{j}\left(
g_{1}\right) -\pi _{j}\left( g_{2}\right) \right\Vert \text{,}
\end{equation*}%
and%
\begin{eqnarray*}
\delta ^{\left( \beta -n\right) \vee 0}\left\Vert \pi _{n}\left( \log
g_{1}\right) -\pi _{n}\left( \log g_{2}\right) \right\Vert &\leq
&C_{p}\tsum\nolimits_{j=1}^{n}\delta ^{\left( \beta -n\right) \vee
0+n-j}\left\Vert \pi _{j}\left( g_{1}\right) -\pi _{j}\left( g_{2}\right)
\right\Vert \\
&\leq &C_{p}\tsum\nolimits_{j=1}^{n}\delta ^{\left( \beta -j\right) \vee
0}\left\Vert \pi _{j}\left( g_{1}\right) -\pi _{j}\left( g_{2}\right)
\right\Vert \text{.}
\end{eqnarray*}%
The proof for the other direction is similar.
\end{proof}

\begin{lemma}
\label{Lemma two ODE with different vector fields}For $i=1,2$ and $\gamma
>p\geq 1$, suppose $f^{i}\in L\left( \mathcal{V},C^{\gamma }\left( \mathcal{U%
},\mathcal{U}\right) \right) $, $X^{i}\in C^{p-var}\left( \left[ 0,T\right]
,G^{\left[ p\right] }\left( \mathcal{V}\right) \right) $ and $g^{i}\in G^{%
\left[ p\right] }\left( \mathcal{V}\right) $. Let $y^{i}:\left[ 0,1\right]
\rightarrow G^{\left[ p\right] }\left( \mathcal{U}\right) $ be the solution
to the ODE%
\begin{equation*}
dy_{u}^{i}=\tsum\nolimits_{k=1}^{\left[ p\right] }F\left( f^{i}\right)
^{\circ k}\pi _{k}\left( \log g^{i}\right) \left( Id_{L^{\left[ p\right]
}\left( \mathcal{U}\right) }\right) \left( y_{u}^{i}\right) du,\text{ }u\in %
\left[ 0,1\right] \text{, }y_{0}^{i}=1\text{.}
\end{equation*}%
We further assume that $\left\vert f^{i}\right\vert _{\gamma }=1$, $i=1,2$,
and $\delta :=\left\vert \!\left\vert \!\left\vert g^{1}\right\vert
\!\right\vert \!\right\vert \vee \left\vert \!\left\vert \!\left\vert
g^{2}\right\vert \!\right\vert \!\right\vert \leq 1$. Then for $k=1,\dots ,%
\left[ p\right] $, 
\begin{equation}
\left\Vert \pi _{k}\left( y_{1}^{1}\right) -\pi _{k}\left( y_{1}^{2}\right)
\right\Vert \leq C_{p}\left( \delta ^{k}\left\vert f^{1}-f^{2}\right\vert _{ 
\left[ p\right] -1}+\delta ^{\left( k-n\right) \vee 0}\tsum\nolimits_{n=1}^{%
\left[ p\right] }\left\Vert \pi _{n}\left( g^{1}\right) -\pi _{n}\left(
g^{2}\right) \right\Vert \right) \text{.}
\label{estimate of two ODEs with different vector field homogeneous initial condition}
\end{equation}
\end{lemma}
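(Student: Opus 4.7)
The plan is to combine the explicit formula of Lemma~\ref{Lemma explicit form of Fcirck} for the levels of the ODE solution with a hybridization in the three sources of variation ($f$, $\log g$, and $y$ itself), and to run induction on $k$. Throughout, Lemma~\ref{Lemma simple property of solution of ODE} supplies the uniform bounds $\sup_u \|\pi_j(y^i_u)\| \leq C_p \delta^j$, and from $|\!|\!|g^i|\!|\!| \leq 1$ together with the definition of $\log$ at $\left(\ref{definition of logarithm}\right)$ one obtains $\|\pi_l(\log g^i)\| \leq C_p \delta^l$.

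First, I would use the second half of Lemma~\ref{Lemma explicit form of Fcirck} to write
\begin{equation*}
\pi_k(y^i_t) = \sum_{j=1}^k \int_0^t \pi_{k-j}(y^i_u) \otimes H^i_j(\pi_1(y^i_u))\, du,
\end{equation*}
where $H^i_j$ is shorthand for the inner sum of operators built from $f^i$ and $\pi_l(\log g^i)$ for $l = j,\ldots,[p]$ appearing in that lemma. The key feature is that the only level-$k$ quantity in the integrand is the argument $\pi_1(y^i_u)$; the tensor factor $\pi_{k-j}(y^i_u)$ has strictly lower level (since $j \geq 1$).

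The base case $k = 1$ is the only one with a self-reference at the level being estimated. I would subtract the two integrands and split
\begin{equation*}
H^1_1(\pi_1(y^1_u)) - H^2_1(\pi_1(y^2_u)) = \bigl(H^1_1 - H^2_1\bigr)(\pi_1(y^1_u)) + H^2_1(\pi_1(y^1_u)) - H^2_1(\pi_1(y^2_u)),
\end{equation*}
and hybridize the first bracket once more (swapping $f^i$ and then $\pi_l(\log g^i)$ one at a time). Standard Lipschitz bounds on $F(f)^{\circ l}$ give the $f$-contribution in terms of $|f^1 - f^2|_{[p]-1}$, and the $\log g$-contribution in terms of $\|\pi_l(\log g^1) - \pi_l(\log g^2)\|$. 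The second bracket is handled by Lipschitz-ness of $H^2_1$ (with constant depending only on $|f^2|_\gamma = 1$ and $\delta \leq 1$), producing a term proportional to $\|\pi_1(y^1_u) - \pi_1(y^2_u)\|$ that Grönwall on $[0,1]$ absorbs into a constant $C_p$.

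For the inductive step $k \geq 2$ there is no new self-reference at level $k$, since the tensor factor $\pi_{k-j}(y^i_u)$ lives at level $k-j \leq k-1$ and the argument $\pi_1(y^i_u)$ is already controlled by the base case. A three-fold hybridization then yields three groups of terms: an $f$-difference which, combining the weights $\delta^{k-j}$ on $\pi_{k-j}(y^i)$ and $\delta^l$ on $\pi_l(\log g^i)$, contributes $\delta^k |f^1 - f^2|_{[p]-1}$; a $\log g$-difference, converted to $g$-differences by Lemma~\ref{Lemma log into exp}, contributing $\sum_{n=1}^{[p]} \delta^{(k-n) \vee 0} \|\pi_n(g^1) - \pi_n(g^2)\|$; and a $y$-difference at levels $\leq k-1$, absorbed by the inductive hypothesis. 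Summing over $j$, integrating over $[0,1]$, and taking the supremum over $t \in [0,1]$ gives $\left(\ref{estimate of two ODEs with different vector field homogeneous initial condition}\right)$.

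The main obstacle I expect is the careful bookkeeping of the powers of $\delta$: in the $f$-contribution, the factors $\delta^{k-j}$ from $\pi_{k-j}(y^i)$ and $\delta^l$ with $l \geq j$ from $\pi_l(\log g^i)$ must multiply to at least $\delta^k$, and in the $n$-th $g$-contribution the remaining weight must collapse to exactly $\delta^{(k-n)\vee 0}$ rather than a higher power. This requires grouping terms by $l$ and $j$ in the explicit formula and using the crude inequality $\delta^a \leq 1$ whenever the exponent would otherwise be negative; the $(k-n) \vee 0$ cut-off is precisely what prevents spurious suppression of high-level $g$-differences (those with $n > k$) by the small factor $\delta \leq 1$.
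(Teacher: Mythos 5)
Your proposal is correct and follows essentially the same route as the paper's proof: the explicit level-by-level formula of Lemma \ref{Lemma explicit form of Fcirck}, hybridization in $f$, $\log g$ and $y$ with Gr\"onwall at level $1$, induction on $k$ using the bounds of Lemma \ref{Lemma simple property of solution of ODE}, and a final passage from $\log g^i$ to $g^i$ via Lemma \ref{Lemma log into exp}. No substantive differences to report.
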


\begin{proof}
For $i=1,2$ and $k=0,1,\dots ,\left[ p\right] $, denote $y^{i,k}:=\pi
_{k}\left( y^{i}\right) $. Based on $\left( \ref{explicit form of levels of
ODE}\right) $ in Lemma \ref{Lemma explicit form of Fcirck} on p\pageref%
{explicit form of levels of ODE}, we have 
\begin{equation*}
y_{t}^{1,1}-y_{t}^{2,1}=\sum_{n=1}^{\left[ p\right] }\int_{0}^{t}\left(
\left( f^{1}\right) ^{\circ n}\pi _{n}\left( \log g^{1}\right) \left( Id_{%
\mathcal{U}}\right) \left( y_{u}^{1,1}\right) -\left( f^{2}\right) ^{\circ
n}\pi _{n}\left( \log g^{2}\right) \left( Id_{\mathcal{U}}\right) \left(
y_{u}^{2,1}\right) \right) du\text{.}
\end{equation*}%
Then ($\delta \leq 1$)%
\begin{equation*}
\left\Vert y_{t}^{1,1}-y_{t}^{2,1}\right\Vert \leq C_{p}\left( \delta
\left\vert f^{1}-f^{2}\right\vert _{\left[ p\right] -1}+\sum_{n=1}^{\left[ p%
\right] }\left\Vert \pi _{n}\left( \log g^{1}\right) -\pi _{n}\left( \log
g^{2}\right) \right\Vert +\delta \int_{0}^{t}\left\Vert
y_{u}^{1,1}-y_{u}^{2,1}\right\Vert du\right) \text{.}
\end{equation*}%
By using Gronwall's inequality, we have 
\begin{equation*}
\sup_{t\in \left[ 0,1\right] }\left\Vert y_{t}^{1,1}-y_{t}^{2,1}\right\Vert
\leq C_{p}\left( \delta \left\vert f^{1}-f^{2}\right\vert _{\left[ p\right]
-1}+\sum_{n=1}^{\left[ p\right] }\left\Vert \pi _{n}\left( \log g^{1}\right)
-\pi _{n}\left( \log g^{2}\right) \right\Vert \right) \text{.}
\end{equation*}

Suppose for $j=1,\dots ,k-1$,%
\begin{equation}
\sup_{t\in \left[ 0,1\right] }\left\Vert y_{t}^{1,j}-y_{t}^{2,j}\right\Vert
\leq C_{p}\left( \delta ^{j}\left\vert f^{1}-f^{2}\right\vert _{\left[ p%
\right] -1}+\delta ^{\left( j-n\right) \vee 0}\sum_{n=1}^{\left[ p\right]
}\left\Vert \pi _{n}\left( \log g^{1}\right) -\pi _{n}\left( \log
g^{2}\right) \right\Vert \right) \text{.}
\label{inner inductive hypothesis in Lemma of two ODEs with different vector field}
\end{equation}%
Then for $k=2,\dots ,\left[ p\right] $, based on $\left( \ref{explicit form
of levels of ODE}\right) $ in Lemma \ref{Lemma explicit form of Fcirck} on p%
\pageref{explicit form of levels of ODE}, we have, ($\delta \leq 1$) 
\begin{eqnarray*}
&&\left\Vert y_{t}^{1,k}-y_{t}^{2,k}\right\Vert \\
&\leq &C_{p}\sum_{j=1}^{k}\delta ^{j}\sup_{t\in \left[ 0,1\right]
}\left\Vert y_{t}^{1,k-j}-y_{t}^{2,k-j}\right\Vert \\
&&+C_{p}\sum_{j=1}^{k}\sup_{t\in \left[ 0,1\right] }\left\Vert
y_{t}^{2,k-j}\right\Vert \left( \delta ^{j}\left\vert f^{1}-f^{2}\right\vert
_{\left[ p\right] -1}+\sum_{l=j}^{\left[ p\right] }\left\Vert \pi _{l}\left(
\log g_{1}\right) -\pi _{l}\left( \log g_{2}\right) \right\Vert +\delta
^{j}\left\Vert y_{t}^{1,1}-y_{t}^{2,1}\right\Vert \right) \text{.}
\end{eqnarray*}%
Then by using $\sup_{t\in \left[ 0,1\right] }\left\Vert
y_{t}^{2,k-j}\right\Vert \leq C_{p}\delta ^{k-j}$ as at $\left( \ref{bound
on y on small interval}\right) $ in Lemma \ref{Lemma simple property of
solution of ODE} and using inductive hypothesis $\left( \ref{inner inductive
hypothesis in Lemma of two ODEs with different vector field}\right) $, 
\begin{equation*}
\left\Vert y_{t}^{1,k}-y_{t}^{2,k}\right\Vert \leq C_{p}\left( \delta
^{k}\left\vert f^{1}-f^{2}\right\vert _{\left[ p\right] -1}+\sum_{n=1}^{%
\left[ p\right] }\delta ^{\left( k-n\right) \vee 0}\left\Vert \pi _{n}\left(
\log g_{1}\right) -\pi _{n}\left( \log g_{2}\right) \right\Vert \right) 
\text{.}
\end{equation*}%
Then combined with Lemma \ref{Lemma log into exp}, one can replace $\log
g_{i}$ by $g_{i}$ (up to a constant depending on $p$).
\end{proof}

Lemma \ref{Lemma simple but important} follows from Lemma 3.5 \cite{AMDavie}
or Lemma 10.22 \cite{FrizVictoirbook}.

\begin{lemma}
\label{Lemma simple but important}Suppose $f^{1}$ and $f^{2}$ are $Lip\left(
\beta \right) \,$\ for some $\beta \in (1,2]$. Then%
\begin{eqnarray*}
&&\left\Vert f^{1}\left( u_{1}\right) -f^{1}\left( u_{2}\right) -\left(
f^{2}\left( v_{1}\right) -f^{2}\left( v_{2}\right) \right) \right\Vert \\
&\leq &\left\vert f^{1}\right\vert _{\beta }\left\Vert u_{1}-u_{2}-\left(
v_{1}-v_{2}\right) \right\Vert +\left( \left\Vert u_{1}-u_{2}\right\Vert
+\left\Vert v_{1}-v_{2}\right\Vert \right) ^{\beta -1}\left( \left\vert
f^{1}\right\vert _{\beta }\left\Vert u_{2}-v_{2}\right\Vert +\left\vert
f^{1}-f^{2}\right\vert _{\beta -1}\right)
\end{eqnarray*}
\end{lemma}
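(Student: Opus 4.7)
The plan is to decompose the quantity $I := f^{1}(u_{1})-f^{1}(u_{2})-(f^{2}(v_{1})-f^{2}(v_{2}))$ as $A+B$, where $A$ pairs $(u_{1},v_{1})$ with $(u_{2},v_{2})$ under $f^{1}$ alone and $B$ isolates the difference $f^{1}-f^{2}$:
\begin{equation*}
A := [f^{1}(u_{1})-f^{1}(v_{1})]-[f^{1}(u_{2})-f^{1}(v_{2})], \qquad B := (f^{1}-f^{2})(v_{1})-(f^{1}-f^{2})(v_{2}).
\end{equation*}
The term $B$ is immediately controlled by the $(\beta-1)$-H\"older seminorm of $f^{1}-f^{2}$, giving $\Vert B\Vert \le \vert f^{1}-f^{2}\vert_{\beta-1}\Vert v_{1}-v_{2}\Vert^{\beta-1}$, which is absorbed into the last summand on the right since $\Vert v_{1}-v_{2}\Vert \le \Vert u_{1}-u_{2}\Vert+\Vert v_{1}-v_{2}\Vert$ and $\beta-1\in(0,1]$.

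For $A$, since $\beta\in(1,2]$ the map $f^{1}$ is Fr\'echet differentiable with both $\Vert Df^{1}\Vert_{\infty}$ and the $(\beta-1)$-H\"older seminorm of $Df^{1}$ dominated by $\vert f^{1}\vert_{\beta}$. I will apply the fundamental theorem of calculus along the segments $[v_{i},u_{i}]$ and use the algebraic identity
\begin{equation*}
u_{1}-v_{1}=(u_{2}-v_{2})+\bigl[(u_{1}-u_{2})-(v_{1}-v_{2})\bigr]
\end{equation*}
to split $A=A_{1}+A_{2}$. Here $A_{1}$ collects the contribution of the common displacement $(u_{1}-u_{2})-(v_{1}-v_{2})$ and is bounded by $\vert f^{1}\vert_{\beta}\Vert (u_{1}-u_{2})-(v_{1}-v_{2})\Vert$, which is the first summand on the right. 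The remaining term $A_{2}$ is the integral over $s\in[0,1]$ of $[Df^{1}(v_{1}+s(u_{1}-v_{1}))-Df^{1}(v_{2}+s(u_{2}-v_{2}))](u_{2}-v_{2})$.

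The crucial estimate is on $A_{2}$: the difference of the two argument points of $Df^{1}$ equals $(1-s)(v_{1}-v_{2})+s(u_{1}-u_{2})$, whose norm is at most $\Vert u_{1}-u_{2}\Vert+\Vert v_{1}-v_{2}\Vert$. Invoking the H\"older bound on $Df^{1}$ yields $\Vert A_{2}\Vert\le \vert f^{1}\vert_{\beta}\Vert u_{2}-v_{2}\Vert(\Vert u_{1}-u_{2}\Vert+\Vert v_{1}-v_{2}\Vert)^{\beta-1}$, which is exactly the middle summand. Summing the bounds on $A_{1}$, $A_{2}$, and $B$ gives the lemma. There is no genuine analytic obstacle; the only subtle point is the choice of decomposition, because pairing same-index points under $f^{1}$ (rather than pairing $u_{1}$ with $u_{2}$ and $v_{1}$ with $v_{2}$) is what makes only $\vert f^{1}\vert_{\beta}$ appear in the bound, and also what forces the H\"older factor to take the symmetric form $\Vert u_{1}-u_{2}\Vert+\Vert v_{1}-v_{2}\Vert$ required by the statement.
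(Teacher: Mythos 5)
Your proof is correct: the decomposition $I=A+B$ with $A$ pairing same-index points under $f^{1}$ and $B=(f^{1}-f^{2})(v_{1})-(f^{1}-f^{2})(v_{2})$, followed by the fundamental theorem of calculus along the segments and the identity $u_{1}-v_{1}=(u_{2}-v_{2})+[(u_{1}-u_{2})-(v_{1}-v_{2})]$, yields exactly the three summands in the stated bound, and every norm you invoke ($\Vert Df^{1}\Vert _{\infty }$, the $(\beta -1)$-H\"{o}lder seminorms of $Df^{1}$ and of $f^{1}-f^{2}$) is indeed dominated by $\left\vert f^{1}\right\vert _{\beta }$, resp. $\left\vert f^{1}-f^{2}\right\vert _{\beta -1}$, under Definition \ref{Definition Cgamma}. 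The paper gives no proof of its own, deferring to Lemma 3.5 of \cite{AMDavie} and Lemma 10.22 of \cite{FrizVictoirbook}, and your argument is essentially the standard one behind those cited results.
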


\begin{lemma}
\label{Lemma B}Suppose $\left[ p\right] +1\geq \gamma >p\geq 1$. For $i=1,2$%
, let $X^{i}\in C^{p-var}\left( \left[ 0,T\right] ,G^{\left[ p\right]
}\left( \mathcal{V}\right) \right) $, $f^{i}\in L\left( \mathcal{V}%
,C^{\gamma }\left( \mathcal{U},\mathcal{U}\right) \right) $ and $\xi ^{i}\in
G^{\left[ p\right] }\left( \mathcal{U}\right) $. Assume $\left\vert
f^{i}\right\vert _{\gamma }\leq 1$ and $\left\Vert \xi ^{i}\right\Vert \leq
1 $. Define control $\omega :\left\{ \left( s,t\right) |0\leq s\leq t\leq
T\right\} \rightarrow \overline{%
\mathbb{R}
^{+}}$ by%
\begin{equation*}
\omega \left( s,t\right) :=\left\Vert X^{1}\right\Vert _{p-var,\left[ s,t%
\right] }^{p}+\left\Vert X^{2}\right\Vert _{p-var,\left[ s,t\right] }^{p}%
\text{.}
\end{equation*}%
For $\left[ s,t\right] \subset \left[ 0,T\right] $ satisfying $\omega \left(
s,t\right) \leq 1$, let $y^{i,s,t}:\left[ 0,1\right] \rightarrow G^{\left[ p%
\right] }\left( \mathcal{U}\right) $ be the solution of the ODE%
\begin{equation*}
dy_{r}^{i,s,t}=\sum_{k=1}^{\left[ p\right] }F\left( f^{i}\right) ^{\circ
k}\pi _{k}\left( \log X_{s,t}^{i}\right) \left( Id_{L^{\left[ p\right]
}\left( \mathcal{U}\right) }\right) \left( y_{r}^{i,s,t}\right) dr\text{, }%
r\in \left[ 0,1\right] \text{, }y_{0}^{i,s,t}=\xi ^{i}\text{.}
\end{equation*}%
For $u\in \left[ s,t\right] $, let $y^{i,s,u,t}:\left[ 0,2\right]
\rightarrow G^{\left[ p\right] }\left( \mathcal{U}\right) $ be the solution
of the ODE%
\begin{equation*}
dy_{r}^{i,s,u,t}=\left\{ 
\begin{array}{cc}
\sum_{k=1}^{\left[ p\right] }F\left( f^{i}\right) ^{\circ k}\pi _{k}\left(
\log X_{s,u}^{i}\right) \left( Id_{L^{\left[ p\right] }\left( \mathcal{U}%
\right) }\right) \left( y_{r}^{i,s,u,t}\right) dr, & r\in \left[ 0,1\right]
\\ 
\sum_{k=1}^{\left[ p\right] }F\left( f^{i}\right) ^{\circ k}\pi _{k}\left(
\log X_{u,t}^{i}\right) \left( Id_{L^{\left[ p\right] }\left( \mathcal{U}%
\right) }\right) \left( y_{r}^{i,s,u,t}\right) dr, & r\in \left[ 1,2\right]%
\end{array}%
\right. \text{, }y_{0}^{i,s,u,t}=\xi ^{i}\text{.}
\end{equation*}%
Then there exists a constant $C_{p}$ such that, for any $\left[ s,t\right]
\subseteq \left[ 0,T\right] $ satisfying $\omega \left( s,t\right) \leq 1$
and any $u\in \left[ s,t\right] $, (with $d_{p,\left[ s,t\right] }^{n}\left(
X^{1},X^{2}\right) $ defined at $\left( \ref{Definition of dpn}\right) $ on p%
\pageref{Definition of dpn}) 
\begin{eqnarray*}
&&\left\Vert y_{2}^{1,s,u,t}-y_{2}^{2,s,u,t}-\left(
y_{1}^{1,s,t}-y_{1}^{2,s,t}\right) \right\Vert \\
&\leq &C_{p}\left( \omega \left( s,t\right) ^{\frac{\gamma }{p}}\left(
\left\vert f^{1}-f^{2}\right\vert _{\gamma -1}+\left\Vert \xi ^{1}-\xi
^{2}\right\Vert \right) +\sum_{n=1}^{\left[ p\right] }\omega \left(
s,t\right) ^{\frac{\gamma -n}{p}}d_{p,\left[ s,t\right] }^{n}\left(
X^{1},X^{2}\right) \right) \text{.}
\end{eqnarray*}
\end{lemma}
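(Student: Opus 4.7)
The plan is to apply Lemma \ref{Lemma explicit Euler expansion} to each $y^{i,s,u,t}$ with $(g,h) = (X^i_{s,u}, X^i_{u,t})$, and the same lemma specialised to $(g,h) = (X^i_{s,t}, 1)$ for the one-interval ODE $y^{i,s,t}_{1}$. Since $X^i_{s,u} \otimes X^i_{u,t} = X^i_{s,t}$, the leading Euler polynomial $\xi^i + \sum_{k=1}^{[p]} F(f^i)^{\circ k}\pi_k(X^i_{s,t})(Id_{L^{[p]}(\mathcal{U})})(\xi^i)$ is the same in both expansions and cancels in $y^{i,s,u,t}_{2} - y^{i,s,t}_{1}$, leaving this difference as an explicit finite sum of multiple integrals of two kinds: single integrals of increments $F(\log X^i_\cdot)^{(j_1,\ldots,j_n)}(y^{i,\cdot}_{r}) - F(\log X^i_\cdot)^{(j_1,\ldots,j_n)}(\zeta^i)$ with $j_1+\cdots+j_n = [p]$, and double integrals of $D F(\log X^i_\cdot)^{(j_1,\ldots,j_n)} \cdot F(\log X^i_\cdot)^{(j_{n+1})}$ with $j_1+\cdots+j_{n+1} \geq [p]+1$. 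The target $E := y_{2}^{1,s,u,t} - y_{2}^{2,s,u,t} - (y_{1}^{1,s,t} - y_{1}^{2,s,t})$ is the difference of these two sums across $i=1,2$.

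For each resulting integrand I would apply the telescoping identity $(a_1 - b_1) - (a_2 - b_2) = (a_1 - a_2) - (b_1 - b_2)$ and then Lemma \ref{Lemma simple but important} with $\beta = 1 + \{\gamma\}$ to the pair of functions $P_i := F(f^i)^{\circ m}(\pi_{j_1}(\log X^i) \otimes \cdots \otimes \pi_{j_n}(\log X^i))(Id_{L^{[p]}(\mathcal{U})})$ (and their first Fr\'echet derivatives for the double-integral terms). This reduces the bound to four ingredients: (a) the coefficient norm $\|\pi_n(\log X^i_{s,t})\| \leq C_p\,\omega(s,t)^{n/p}$; (b) the coefficient difference $\|\pi_n(\log X^1_{s,t}) - \pi_n(\log X^2_{s,t})\|$, converted via Lemma \ref{Lemma log into exp} together with tensor-product telescoping into $d_{p,[s,t]}^n(X^1,X^2)$ at level $n$; (c) trajectory increments $\|\pi_k(y^{i,\cdot}_{r} - \zeta^i)\| \leq C_p\,\omega(s,t)^{k/p}$ from Lemma \ref{Lemma simple property of solution of ODE}; and (d) cross-system trajectory differences $\|y^{1,\cdot}_{r} - y^{2,\cdot}_{r} - (\xi^1 - \xi^2)\|$ obtained from Lemma \ref{Lemma two ODE with different vector fields} after re-centering, so as to absorb the initial-condition discrepancy $\|\xi^1 - \xi^2\|$. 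Assembled, the $|f^1 - f^2|_{\gamma-1}$ contribution emerges from the $|P_1 - P_2|_{\{\gamma\}}$ piece of Lemma \ref{Lemma simple but important}, the $\|\xi^1 - \xi^2\|$ contribution is transported by the trajectory-difference step, and each $d_{p,[s,t]}^n(X^1,X^2)$ term comes from peeling off the level-$n$ factor in the coefficient-difference telescoping, with the remaining factors supplying the $\omega(s,t)^{(\gamma-n)/p}$ power.

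The main obstacle is the combinatorial bookkeeping: ensuring that every resulting term carries total $\omega(s,t)^{1/p}$-homogeneity at least $\gamma$ in the first two right-hand pieces and $\gamma - n$ in the $d^{n}$ piece. For the single-integral terms, $j_1+\cdots+j_n = [p]$ only yields $\omega(s,t)^{[p]/p}$; the extra $\omega(s,t)^{\{\gamma\}/p}$ needed to reach $\omega(s,t)^{\gamma/p}$ is supplied by the H\"older bump $(\|y^{i,\cdot}_{r} - \zeta^i\|)^{\{\gamma\}}$ in Lemma \ref{Lemma simple but important}, which is precisely where the strict inequality $\gamma > p$ (so that $\{\gamma\} = \gamma - [p] > 0$, recalling $\gamma \leq [p]+1$) is essential. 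The double-integral terms already carry $j_1+\cdots+j_{n+1} \geq [p]+1 \geq \gamma$, so the analogous estimate goes through without using the H\"older bump. Once this accounting is carried out, summing termwise over the finitely many admissible multi-indices and using $\omega(s,t) \leq 1$ to absorb harmless lower powers yields the claimed inequality.
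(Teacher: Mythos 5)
Your plan follows essentially the same route as the paper's own proof: expand the two-step and one-step ODEs via Lemma \ref{Lemma explicit Euler expansion} so that the common Euler polynomial (using $X^i_{s,u}\otimes X^i_{u,t}=X^i_{s,t}$) cancels, compare the remainder integrands across the two systems with Lemma \ref{Lemma simple but important} (with $\beta=1+\{\gamma\}$), convert log-signature differences into $d^n_{p,[s,t]}$ via Lemma \ref{Lemma log into exp}, and control trajectory increments and re-centered cross-system differences through Lemma \ref{Lemma simple property of solution of ODE} and Lemma \ref{Lemma two ODE with different vector fields}, with the H\"older bump supplying the extra $\omega(s,t)^{\{\gamma\}/p}$ on the level-$[p]$ single-integral terms and the $j_1+\cdots+j_{n+1}\geq[p]+1$ terms handled by plain telescoping. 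This is exactly the decomposition into the paper's terms $I$--$IV$, so the proposal is correct and essentially identical in approach.
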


\begin{proof}
Fix $\left[ s,t\right] \subseteq \left[ 0,T\right] $ satisfying $\omega
\left( s,t\right) \leq 1$. Since $\left\Vert \xi ^{i}\right\Vert \leq 1$ and 
$\omega \left( s,t\right) \leq 1$, (based on $\left( \ref{bound on y on
small interval}\right) $ in Lemma \ref{Lemma simple property of solution of
ODE} on p\pageref{bound on y on small interval}) there exists $C_{p}$ such
that 
\begin{equation}
\max_{i=1,2}\left\Vert y^{i,s,u,t}\right\Vert _{\infty }\vee \left\Vert
y^{i,s,t}\right\Vert _{\infty }\leq C_{p}\text{.}
\label{inner first estimate of uniform norm}
\end{equation}

Based on Lemma \ref{Lemma explicit Euler expansion} (explicit remainder of
Euler expansion of ODE) and Lemma \ref{Lemma simple but important}, and
using $\left( \ref{inner first estimate of uniform norm}\right) $ and Lemma %
\ref{Lemma log into exp} (replacing log-signature by signature), we have%
\begin{equation}
\left\Vert y_{0,2}^{1,s,u,t}-y_{0,2}^{2,s,u,t}-\left(
y_{0,1}^{1,s,t}-y_{0,1}^{2,s,t}\right) \right\Vert \leq C_{p}\left(
I+I\!I+I\!I\!I+I\!V\right) \text{,}  \label{inner eqn 1 Lemma B}
\end{equation}%
where ($\delta :=\omega \left( s,t\right) \leq 1$)%
\begin{eqnarray*}
I &=&\delta ^{\left[ p\right] }\left( \sup_{r\in \left[ 1,2\right]
}\left\Vert
y_{r}^{1,s,u,t}-y_{1}^{1,s,u,t}-y_{r}^{2,s,u,t}+y_{1}^{2,s,u,t}\right\Vert
\right) \\
&&+\left( \sup_{r\in \left[ 1,2\right] }\left\Vert
y_{r}^{1,s,u,t}-y_{1}^{1,s,u,t}\right\Vert +\sup_{r\in \left[ 1,2\right]
}\left\Vert y_{r}^{2,s,u,t}-y_{1}^{2,s,u,t}\right\Vert \right) ^{\left\{
\gamma \right\} } \\
&&\times \left( \delta ^{\left[ p\right] }\left\Vert
y_{1}^{1,s,u,t}-y_{1}^{2,s,u,t}\right\Vert +\delta ^{\left[ p\right]
}\left\vert f^{1}-f^{2}\right\vert _{\gamma -1}+\tsum\nolimits_{n=1}^{\left[
p\right] }\delta ^{\left[ p\right] -n}d_{p,\left[ s,t\right] }^{n}\left(
X^{1},X^{2}\right) \right) \text{,}
\end{eqnarray*}%
\begin{eqnarray*}
I\!I &=&\delta ^{\left[ p\right] }\left( \sup_{r\in \left[ 0,1\right]
}\left\Vert y_{r}^{1,s,u,t}-\xi ^{1}-y_{r}^{2,s,u,t}+\xi ^{2}\right\Vert
\right) \\
&&+\left( \sup_{r\in \left[ 0,1\right] }\left\Vert y_{r}^{1,s,u,t}-\xi
^{1}\right\Vert +\left\Vert y_{r}^{2,s,u,t}-\xi ^{2}\right\Vert \right)
^{\left\{ \gamma \right\} } \\
&&\times \left( \delta ^{\left[ p\right] }\left\Vert \xi ^{1}-\xi
^{2}\right\Vert +\delta ^{\left[ p\right] }\left\vert f^{1}-f^{2}\right\vert
_{\gamma -1}+\tsum\nolimits_{n=1}^{\left[ p\right] }\delta ^{\left[ p\right]
-n}d_{p,\left[ s,t\right] }^{n}\left( X^{1},X^{2}\right) \right) \text{,}
\end{eqnarray*}%
\begin{eqnarray*}
I\!I\!I &=&\delta ^{\left[ p\right] }\left( \sup_{r\in \left[ 0,1\right]
}\left\Vert y_{r}^{1,s,t}-\xi ^{1}-y_{r}^{2,s,t}+\xi ^{2}\right\Vert \right)
\\
&&+\left( \sup_{r\in \left[ 0,1\right] }\left\Vert y_{r}^{1,s,t}-\xi
^{1}\right\Vert +\left\Vert y_{r}^{2,s,t}-\xi ^{2}\right\Vert \right)
^{\left\{ \gamma \right\} } \\
&&\times \left( \delta ^{\left[ p\right] }\left\Vert \xi ^{1}-\xi
^{2}\right\Vert +\delta ^{\left[ p\right] }\left\vert f^{1}-f^{2}\right\vert
_{\gamma -1}+\tsum\nolimits_{n=1}^{\left[ p\right] }\delta ^{\left[ p\right]
-n}d_{p,\left[ s,t\right] }^{n}\left( X^{1},X^{2}\right) \right) \text{,}
\end{eqnarray*}%
\begin{equation*}
I\!V=\delta ^{\left[ p\right] +1}\left( \sup_{r\in \left[ 0,2\right]
}\left\Vert y_{r}^{1,s,u,t}-y_{r}^{2,s,u,t}\right\Vert +\sup_{r\in \left[ 0,1%
\right] }\left\Vert y_{r}^{1,s,t}-y_{r}^{2,s,t}\right\Vert +\left\vert
f^{1}-f^{2}\right\vert _{\gamma -1}\right) +\sum_{n=1}^{\left[ p\right]
}\delta ^{\left[ p\right] +1-n}d_{p,\left[ s,t\right] }^{n}\left(
X^{1},X^{2}\right) \text{.}
\end{equation*}

Based on Lemma \ref{Lemma two ODE with different vector fields} (continuous
dependence of ODE solution), we have%
\begin{equation*}
\left\Vert y^{1,s,u,t}-y^{2,s,u,t}\right\Vert _{\infty ,\left[ 0,2\right]
}\vee \left\Vert y^{1,s,t}-y^{2,s,t}\right\Vert _{\infty ,\left[ 0,1\right]
}\leq C_{p}\left( \left\Vert \xi ^{1}-\xi ^{2}\right\Vert +\left\vert
f^{1}-f^{2}\right\vert _{\gamma -1}+\tsum\nolimits_{n=1}^{\left[ p\right]
}d_{p,\left[ s,t\right] }^{n}\left( X^{1},X^{2}\right) \right) \text{,}
\end{equation*}%
and%
\begin{eqnarray*}
&&\sup_{r\in \left[ 0,1\right] }\left( \left\Vert
y_{r+1}^{1,s,u,t}-y_{1}^{1,s,u,t}-y_{r+1}^{2,s,u,t}+y_{1}^{2,s,u,t}\right%
\Vert \vee \left\Vert y_{r}^{1,s,u,t}-\xi ^{1}-y_{r}^{2,s,u,t}+\xi
^{2}\right\Vert \vee \left\Vert y_{r}^{1,s,t}-\xi ^{1}-y_{r}^{2,s,t}+\xi
^{2}\right\Vert \right) \\
&\leq &C_{p}\left( \delta \left( \left\Vert \xi ^{1}-\xi ^{2}\right\Vert
+\left\vert f^{1}-f^{2}\right\vert _{\gamma -1}\right)
+\tsum\nolimits_{n=1}^{\left[ p\right] }d_{p,\left[ s,t\right] }^{n}\left(
X^{1},X^{2}\right) \right) \text{,}
\end{eqnarray*}%
Based on Lemma \ref{Lemma Euler expansion of solution of ODE and difference
between one step and two steps} (error of high order Euler expansion), we
have%
\begin{equation*}
\sup_{r\in \left[ 0,1\right] }\left( \left\Vert
y_{r+1}^{i,s,u,t}-y_{1}^{i,s,u,t}\right\Vert \vee \left\Vert
y_{r}^{i,s,u,t}-\xi ^{i}\right\Vert \vee \left\Vert y_{r}^{i,s,t}-\xi
^{i}\right\Vert \right) \leq C_{p}\delta \text{.}
\end{equation*}%
By substituting these estimates into $\left( \ref{inner eqn 1 Lemma B}%
\right) $, we have 
\begin{equation*}
\left\Vert y_{2}^{1,s,u,t}-y_{2}^{2,s,u,t}-\left(
y_{1}^{1,s,t}-y_{1}^{2,s,t}\right) \right\Vert \leq C_{p}\left( \delta
^{\gamma }\left( \left\Vert \xi ^{1}-\xi ^{2}\right\Vert +\left\vert
f^{1}-f^{2}\right\vert _{\gamma -1}\right) +\tsum\nolimits_{n=1}^{\left[ p%
\right] }\delta ^{\gamma -n}d_{p,\left[ s,t\right] }^{n}\left(
X^{1},X^{2}\right) \right) \text{.}
\end{equation*}
\end{proof}

\subsection{RDE driven by weak geometric rough path in Banach space}

\begin{notation}
\label{Notation dyadic partition of a control}Suppose $\omega :\left\{
\left( s,t\right) |0\leq s\leq t\leq T\right\} \rightarrow \overline{%
\mathbb{R}
^{+}}$ is a control. For integer $n\geq 0$, let $D^{n}=\left\{
t_{j}^{n}\right\} _{j=0}^{2^{n}}$ be a sequence of nested finite partitions
of $\left[ 0,T\right] $, defined recursively as 
\begin{equation*}
t_{0}^{0}=0\text{, }t_{1}^{0}:=T\text{, }t_{2j}^{n+1}:=t_{j}^{n}\text{, }%
j=0,1,\dots ,2^{n}\text{, }n\geq 0\text{,}
\end{equation*}%
and $t_{2j+1}^{n+1}\in \left( t_{j}^{n},t_{j+1}^{n}\right) $ satisfying 
\begin{equation*}
\omega \left( t_{2j}^{n+1},t_{2j+1}^{n+1}\right) =\omega \left(
t_{2j+1}^{n+1},t_{2j+2}^{n+1}\right) \leq \frac{1}{2}\omega \left(
t_{j}^{n},t_{j+1}^{n}\right) \text{, }j=0,\dots ,2^{n}-1\text{, }n\geq 0%
\text{. }
\end{equation*}%
Denote%
\begin{equation*}
\Lambda \left( n\right) :=\left\{ t_{j}^{n}|j=0,\dots ,2^{n}\right\} \text{, 
}n\geq 0\text{.}
\end{equation*}%
We call $\left[ t_{j}^{n},t_{j+1}^{n}\right] $ a dyadic interval of level $n$
and call $t_{j}^{n}$ a dyadic point of level $n$.
\end{notation}

As a result, when the level of dyadic intervals increases, their "length"
decreases. Then we decompose an interval as union of dyadic intervals. (The
decomposition is in the same spirit as 4.1.1 in \cite{LyonsQian}\ or Lemma
28 in \cite{LyonsYang}.)

\begin{lemma}
\label{Lemma decompose an interval as union of dyadic intervals}For integer $%
n\geq 0$ and $\left\{ s,t\right\} \subseteq \Lambda \left( n\right) $,
denote by $n_{0}$ the level of biggest dyadic interval in $\left[ s,t\right] 
$. Then we can decompose $\left[ s,t\right] $ as union of dyadic intervals
in such a way that, there exists a dyadic point $p\in \left[ s,t\right] $ of
level $n_{0}-1$, such that the level of dyadic intervals to the left/right
of $p$ is strictly increasing.
\end{lemma}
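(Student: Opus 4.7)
The plan is first to identify the point $p$ by a short case analysis on how $[s,t]$ sits relative to the level-$(n_0-1)$ dyadic grid, and then to build the decomposition on each side of $p$ by a greedy binary-expansion recursion inside a shrinking enclosing dyadic interval.

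\emph{Choosing $p$.} By the definition of $n_0$, no dyadic interval of level $\leq n_0-1$ lies in $[s,t]$. Hence $[s,t]$ cannot meet three or more consecutive level-$(n_0-1)$ intervals, for then the middle one would sit inside $[s,t]$. If $[s,t]$ spans two adjacent level-$(n_0-1)$ intervals $[\alpha,\beta]$ and $[\beta,\gamma]$, I set $p:=\beta$; one checks $s\in(\alpha,\beta)$ and $t\in(\beta,\gamma)$ strictly, since otherwise one of these level-$(n_0-1)$ intervals would lie in $[s,t]$. If instead $[s,t]\subseteq[\alpha,\beta]$ is contained in a single level-$(n_0-1)$ interval, then $[s,t]\neq[\alpha,\beta]$ (else this interval itself would violate the definition of $n_0$), and a look at the two level-$n_0$ children of $[\alpha,\beta]$ forces exactly one of $s=\alpha$ or $t=\beta$ to hold (otherwise neither child lies in $[s,t]$, again contradicting the definition of $n_0$); I take $p$ to be that endpoint. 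In every case $p$ is the unique element of $\Lambda(n_0-1)\cap[s,t]$.

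\emph{Decomposition of $[s,p]$, the right side $[p,t]$ being symmetric.} Assume $s<p$, the case $s=p$ being trivial. From the case analysis, $[s,p]$ lies inside the level-$(n_0-1)$ dyadic interval $[P,p]$ ending at $p$, with $s\in(P,p]$. I run the following recursion on a current enclosing dyadic interval $[A,R]$, initially $[P,p]$, preserving the invariant $s\in(A,R]$: let $M$ be the $\omega$-midpoint of $[A,R]$ and consider its two children $[A,M]$ and $[M,R]$, both one level higher than $[A,R]$. If $s\leq M$, emit the right child $[M,R]$ as the next piece of the decomposition and recurse with new enclosing interval $[A,M]$; if $s>M$, emit no piece and recurse with new enclosing interval $[M,R]$. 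Since the enclosing level strictly increases at each step and $s\in\Lambda(n)$ for some fixed $n$, the recursion terminates after finitely many steps, ending when the current enclosing interval has $R=s$ and nothing remains to decompose.

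The emitted pieces are non-overlapping dyadic intervals whose union is $[s,p]$. Each emitted piece is the right child of its enclosing interval, so its level is one more than the enclosing level at the moment of emission; since the enclosing level strictly increases between any two emissions, the levels of the emitted pieces --- read from $p$ leftward towards $s$ --- form a strictly increasing sequence. The main obstacle, in my view, is the opening case analysis establishing existence and uniqueness of $p$ together with the containment $[s,p]\subseteq[P,p]$; once that is in place, the greedy recursion is essentially forced by the binary-tree structure of the dyadic partitions and there is little further work.
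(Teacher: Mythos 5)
Your argument is correct and is essentially the paper's own proof in different clothing: the paper likewise locates $p$ as a level-$(n_{0}-1)$ point via the observation that no level-$(n_{0}-1)$ interval (equivalently, more than two adjacent level-$n_{0}$ intervals) can sit inside $\left[ s,t\right] $, and then peels off dyadic intervals of strictly increasing level toward each endpoint, your top-down binary descent through the nested partition being just an explicit implementation of its ``recursively cut out the biggest dyadic interval'' step. The only difference is bookkeeping: your invariant-based recursion makes the termination (once the enclosing level reaches $n$, the point $s$ must be an endpoint of the enclosing interval) and the strict monotonicity of the levels slightly more explicit than the paper's wording.
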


\begin{proof}
We recursively cut out the biggest dyadic interval in $\left[ s,t\right] $,
and decompose $\left[ s,t\right] $ as union of dyadic intervals. Denote the
level of the biggest dyadic interval in $\left[ s,t\right] $ by $n_{0}$.
Then $n_{0}\leq n$, and there could be one level $n_{0}$ dyadic interval or
two adjacent level $n_{0}$ dyadic intervals in $\left[ s,t\right] $, but
there can not be more than two of them. Indeed, if there are more than two
level $n_{0}$ dyadic intervals, then (since $\left[ s,t\right] $ is
connected) two of them will compose a level $n_{0}-1$ dyadic interval, which
contradicts with our assumption that the biggest dyadic interval is of level 
$n_{0}$. Let $I_{l}/I_{r}$ denote the interval on the left/right side of
level $n_{0}$ dyadic interval(s) in $\left[ s,t\right] $. Since we cut out
the level $n_{0}$ dyadic interval(s) in $\left[ s,t\right] $, $I_{l}/I_{r}$
is strictly contained in a level $n_{0}$ dyadic interval, with its
right/left boundary point a level $n_{0}$ dyadic point. Thus, by recursively
cutting out the biggest dyadic interval in $I_{l}/I_{r}$, we decompose $%
I_{l}/I_{r}$ as the union of dyadic intervals which are strictly monotone in
their level. \ In this way, we decompose $\left[ s,t\right] $ as the union
of dyadic intervals. If there are two level $n_{0}$ dyadic intervals in $%
\left[ s,t\right] $ (denoted as $I^{1}$ and $I^{2}$), we select $p$ as the
point between $I^{1}$ and $I^{2}$, so $p$ is a level $n_{0}-1$ dyadic point.
If there is only one level $n_{0}$ dyadic interval (denoted as $I$), we
select $p$ as the boundary point of $I$ which is of level $n_{0}-1$. Based
on our construction, the level of dyadic interval(s) to the left/right of $p$
is strictly increasing.
\end{proof}

Lemma \ref{Lemma dyadic to non-dyadic} and Lemma \ref{Lemma from dyadic
difference to non-dyadic differene} extend estimates on dyadic intervals to
general intervals.

\begin{lemma}
\label{Lemma dyadic to non-dyadic}Suppose $\omega $ is a control with dyadic
partition $\Lambda \left( n\right) =\{t_{j}^{n}\}_{j}$ as in Notation \ref%
{Notation dyadic partition of a control}. Suppose $\gamma :\left[ 0,T\right]
\rightarrow \mathcal{U}$ is a continuous path, and for some $\theta >0$ and
some integer $n\geq 1$,%
\begin{equation*}
\left\Vert \gamma _{t}-\gamma _{s}\right\Vert \leq \omega \left( s,t\right)
^{\theta }\text{, }\forall \left[ s,t\right] =\left[ t_{j}^{l},t_{j+1}^{l}%
\right] \subseteq \left[ 0,T\right] \text{, }l=0,1,\dots ,n\text{.}
\end{equation*}%
Then there exists $C_{\theta }$ such that%
\begin{equation*}
\left\Vert \gamma _{t}-\gamma _{s}\right\Vert \leq C_{\theta }\omega \left(
s,t\right) ^{\theta }\text{, }\forall \left[ s,t\right] \subseteq \left[ 0,T%
\right] \text{, }\left\{ s,t\right\} \subseteq \Lambda \left( n\right) \text{%
.}
\end{equation*}
\end{lemma}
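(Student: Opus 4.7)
The plan is to apply the just-proved decomposition Lemma \ref{Lemma decompose an interval as union of dyadic intervals} to express $[s,t]$ as a finite disjoint union of dyadic intervals $J_1^L, \ldots, J_{k_L}^L$ on the left of some central point $p$ and $J_1^R, \ldots, J_{k_R}^R$ on the right, with strictly increasing levels $n_0 = m_1^L < m_2^L < \cdots$ outward from $p$ on each side (where $n_0$ is the level of the coarsest dyadic interval in $[s,t]$), and then sum the hypothesized estimate over the pieces, exploiting the geometric decay of $\omega$ on successively finer dyadic intervals. Since $\{s,t\} \subseteq \Lambda(n)$, every piece has level at most $n$, so the hypothesis applies to each $J_i^{L/R}$.

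The main estimate to establish, by induction on $i$, is
\[
\omega(J_i^L) \leq 2^{-(m_i^L - n_0)}\, \omega(s,t),
\]
and symmetrically for $J_j^R$. The base case $i=1$ holds by subadditivity since $J_1^L \subseteq [s,t]$. For the inductive step, I would show $J_{i+1}^L$ is contained in the sibling of $J_i^L$ (the other half of their common level-$(m_i^L-1)$ parent). Indeed, the right endpoint of $J_i^L$ coincides with the left endpoint of $J_{i-1}^L$ (or with $p$), which is a dyadic point of level at most $m_i^L - 1$, forcing $J_i^L$ to be the right half of its parent; and maximality of $J_i^L$ in its containing interval forbids the sibling from also lying in the remaining left part, for otherwise the entire parent would be a larger dyadic sub-interval inside that interval, a contradiction. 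Combined with the equal-$\omega$ property of siblings from Notation \ref{Notation dyadic partition of a control} and iterated halving under refinement, this yields $\omega(J_{i+1}^L) \leq 2^{-(m_{i+1}^L - m_i^L)}\,\omega(J_i^L)$, closing the induction.

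Since the levels strictly increase by at least one step, $m_i^L \geq n_0 + i - 1$. Applying the hypothesis to each piece and summing,
\[
\|\gamma_t - \gamma_s\| \leq \sum_{i=1}^{k_L} \omega(J_i^L)^\theta + \sum_{j=1}^{k_R} \omega(J_j^R)^\theta \leq 2\sum_{k=0}^{\infty} 2^{-k\theta}\, \omega(s,t)^\theta = \frac{2}{1 - 2^{-\theta}}\, \omega(s,t)^\theta,
\]
so $C_\theta := 2/(1-2^{-\theta})$ works. The main obstacle is the sibling-containment step, which rests on tracking the parity of halvings together with the minimality of $J_i^L$; the remainder is straightforward bookkeeping and summation of a geometric series.
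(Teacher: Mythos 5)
Your proof is correct and is essentially the paper's own argument: both rely on Lemma \ref{Lemma decompose an interval as union of dyadic intervals}, extract a factor-$\tfrac{1}{2}$ geometric decay of $\omega$ along the pieces on each side of $p$ (the paper halves the tails $\omega\left( t_{j},t_{l}\right) $ using superadditivity together with the equal-$\omega$ sibling property, while you halve the successive pieces themselves via the sibling-containment argument --- the same structural facts about the nested dyadic partition), and then sum a geometric series. One cosmetic normalization: when the unique coarsest dyadic interval lies on the other side of $p$ one may have $m_{1}^{L}>n_{0}$, so the bound your base case actually justifies is $\omega\left( J_{i}^{L}\right) \leq 2^{-\left( i-1\right) }\omega\left( s,t\right) $ rather than $2^{-\left( m_{i}^{L}-n_{0}\right) }\omega\left( s,t\right) $, which is exactly what your final summation uses, so nothing is lost.
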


\begin{proof}
Fix $\left\{ s,t\right\} \subseteq \Lambda \left( n\right) $. Denote by $%
n_{0}$ the level of biggest dyadic interval in $\left[ s,t\right] $. Then we
decompose $\left[ s,t\right] $ as union of dyadic intervals as in Lemma \ref%
{Lemma decompose an interval as union of dyadic intervals}, so there exists
a level $n_{0}-1$ dyadic point $u\in \left[ s,t\right] $ such that the level
of dyadic intervals to the left/right of $u$ is strictly increasing. We
estimate $\left[ u,t\right] $ as an example. The estimation of $\left[ s,u%
\right] $ is similar. Suppose the dyadic decomposition of $\left[ u,t\right] 
$ is $\left[ t_{0},t_{1}\right] \cup \cdots \cup \left[ t_{l-1},t_{l}\right] 
$. Since the level of $\left[ t_{j},t_{j+1}\right] $ is strictly increasing
as $j$ increases and $u$ is a dyadic point of level $n_{0}-1$, we have, the
level of $t_{j}$ is strictly lower than the level of $\left[ t_{j},t_{j+1}%
\right] $, $j=0,1,\dots ,l-1$. Then,%
\begin{equation*}
\omega \left( t_{j},t_{j+1}\right) \leq \omega \left( t_{j},t_{l}\right)
\leq \frac{1}{2}\omega \left( t_{j-1},t_{l}\right) \leq \cdots \leq \frac{1}{%
2^{j}}\omega \left( t_{0},t_{l}\right) =\frac{1}{2^{j}}\omega \left(
u,t\right) \text{.}
\end{equation*}%
Thus, 
\begin{equation*}
\left\Vert \gamma _{t}-\gamma _{u}\right\Vert \leq
\sum_{j=0}^{l-1}\left\Vert \gamma _{t_{j+1}}-\gamma _{t_{j}}\right\Vert \leq
\sum_{j=0}^{l-1}\omega \left( t_{j},t_{j+1}\right) ^{\theta }\leq \left(
\sum_{j=0}^{l-1}\frac{1}{2^{j\theta }}\right) \omega \left( u,t\right)
^{\theta }\leq C_{\theta }\omega \left( u,t\right) ^{\theta }\text{.}
\end{equation*}%
Similarly, we get%
\begin{equation*}
\left\Vert \gamma _{u}-\gamma _{s}\right\Vert \leq C_{\theta }\omega \left(
s,u\right) ^{\theta }\text{.}
\end{equation*}%
Then for $\left\{ s,t\right\} \subseteq \Lambda \left( n\right) $, $\left[
s,t\right] \subseteq \left[ 0,T\right] $, we have%
\begin{equation*}
\left\Vert \gamma _{t}-\gamma _{s}\right\Vert \leq \left\Vert \gamma
_{u}-\gamma _{s}\right\Vert +\left\Vert \gamma _{t}-\gamma _{u}\right\Vert
\leq C_{\theta }\omega \left( s,u\right) ^{\theta }+C_{\theta }\omega \left(
u,t\right) ^{\theta }\leq C_{\theta }\omega \left( s,t\right) ^{\theta }%
\text{.}
\end{equation*}
\end{proof}

\begin{lemma}
\label{Lemma from dyadic difference to non-dyadic differene}For $\gamma
>p\geq 1$, suppose $X\in C^{p-var}\left( \left[ 0,T\right] ,G^{\left[ p%
\right] }\left( \mathcal{V}\right) \right) $ and $f\in L\left( \mathcal{V}%
,C^{\gamma }\left( \mathcal{U},\mathcal{U}\right) \right) $. Define control%
\begin{equation*}
\omega \left( s,t\right) :=\left\vert f\right\vert _{\gamma }^{p}\left\Vert
X\right\Vert _{p-var,\left[ s,t\right] }^{p}\text{, }\forall 0\leq s\leq
t\leq T\text{.}
\end{equation*}%
For some $C_{p}>0$ and $\left\{ s_{0},t_{0}\right\} \subseteq \Lambda \left(
n\right) $ satisfying $\omega \left( s_{0},t_{0}\right) \leq 1$, suppose $y:%
\left[ s_{0},t_{0}\right] \rightarrow G^{\left[ p\right] }\left( \mathcal{U}%
\right) $ is continuous and satisfies 
\begin{equation*}
\sup_{r\in \left[ s_{0},t_{0}\right] }\left\Vert y_{r}\right\Vert \leq C_{p}%
\text{.}
\end{equation*}%
For $\left\{ s,t\right\} \subseteq \Lambda \left( n\right) $, $\left[ s,t%
\right] \subseteq \left[ s_{0},t_{0}\right] $, let $y^{s,t}:\left[ 0,1\right]
\rightarrow G^{\left[ p\right] }\left( \mathcal{U}\right) $ be the solution
of the ODE%
\begin{equation*}
dy_{r}^{s,t}=\sum_{k=1}^{\left[ p\right] }F\left( f\right) ^{\circ k}\left(
\pi _{k}\left( \log X_{s,t}\right) \right) \left( Id_{L^{\left[ p\right]
}\left( \mathcal{U}\right) }\right) \left( y_{r}^{s,t}\right) dr\text{, }%
r\in \left[ 0,1\right] \text{, }y_{0}^{s,t}=y_{s}\text{.}
\end{equation*}%
Suppose for some $C_{p}>0$ and $\theta >0$, we have%
\begin{equation}
\left\Vert y_{t}-y_{1}^{s,t}\right\Vert \leq C_{p}\omega \left( s,t\right)
^{\theta }\text{, }\forall \left[ s,t\right] =\left[ t_{j}^{l},t_{j+1}^{l}%
\right] \subseteq \left[ s_{0},t_{0}\right] \text{, }l=0,1,\dots ,n\text{.}
\label{inner estimate on dyadic interval in Lemma from dyadic to non-dyadic}
\end{equation}%
Then for any $\left\{ s,t\right\} \in \Lambda \left( n\right) $, $\left[ s,t%
\right] \subseteq \left[ s_{0},t_{0}\right] $, we have%
\begin{equation*}
\left\Vert y_{t}-y_{1}^{s,t}\right\Vert \leq C_{p,\theta }\omega \left(
s,t\right) ^{\theta \wedge \left( \frac{\left[ p\right] +1}{p}\right) }\text{%
.}
\end{equation*}
\end{lemma}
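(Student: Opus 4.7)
The plan is, for a general pair of dyadic points $\{s,t\}\subseteq\Lambda(n)$, to decompose $[s,t]$ into dyadic sub-intervals using Lemma \ref{Lemma decompose an interval as union of dyadic intervals}, interpose a concatenated-ODE approximation $\tilde y$ between the genuine path $y$ and the single-step ODE $y^{s,t}$, and bound the two resulting errors separately. Write $[s,t]=I_1\cup\cdots\cup I_m$ with split point $p_\ast\in[s,t]$ as in Lemma \ref{Lemma decompose an interval as union of dyadic intervals}; the strict monotonicity of dyadic levels away from $p_\ast$ forces $\omega(I_j)\le 2^{-k_j}\omega(s,t)$, where $k_j$ is the combinatorial distance from $I_j$ to $p_\ast$, and in particular $\sum_j\omega(I_j)^\alpha\le C_{\alpha}\omega(s,t)^\alpha$ for every $\alpha>0$. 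Define $\tilde y$ on $[s,t]$ by $\tilde y(s):=y_s$ and, on each $I_j=[t_{j-1},t_j]$, let $\tilde y(t_j)$ be the time-$1$ value of the ODE driven by $\log X_{t_{j-1},t_j}$ started at $\tilde y(t_{j-1})$.

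For the error $\|y_t-\tilde y(t)\|$, I would telescope step by step through the dyadic blocks: at each $I_j$ the hypothesis $(\ref{inner estimate on dyadic interval in Lemma from dyadic to non-dyadic})$ supplies a local error $C_p\omega(I_j)^\theta$, and carrying any pre-existing error forward through one further ODE block costs a Gronwall factor $\exp(C_p\omega(I_j)^{1/p})$ (the ODE vector field has Lipschitz constant comparable to $|||\log X_{I_j}|||$, itself of order $\omega(I_j)^{1/p}$ by Lemma \ref{Lemma log into exp}). The product of all such factors is bounded by $\exp(C_p\sum_j\omega(I_j)^{1/p})\le\exp(C_p\omega(s,t)^{1/p})\le C_p$, and the sum of local errors is $\le C_{p,\theta}\omega(s,t)^\theta$, yielding $\|y_t-\tilde y(t)\|\le C_{p,\theta}\omega(s,t)^\theta$.

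For the error $\|\tilde y(t)-y_1^{s,t}\|$, I would collapse pairs of neighbouring ODE blocks into a single ODE block, working from the outside of the decomposition inward. At each merge of $I_j\cup I_{j+1}$, after translating the vector field $f$ by the current base point (which preserves $|f|_\gamma$ and turns the shifted problem into one started from $1$, thanks to the identity $F(f)(v)(\eta\otimes z)=\eta\otimes F(f(\,\cdot\,+\pi_1\eta))(v)(z)$), equation $(\ref{equ 2 in Lemma of ODE})$ of Lemma \ref{Lemma Euler expansion of solution of ODE and difference between one step and two steps} gives a cost $\le C_p\omega(I_j\cup I_{j+1})^{([p]+1)/p}$. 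Propagating each such endpoint error through the remaining downstream ODE blocks costs at most the uniform Gronwall factor above. Summing via the geometric-decay estimate gives $\|\tilde y(t)-y_1^{s,t}\|\le C_p\omega(s,t)^{([p]+1)/p}$. Combining with the first bound and using $\omega(s,t)\le 1$ to absorb the minimum of exponents produces the desired $C_{p,\theta}\omega(s,t)^{\theta\wedge(([p]+1)/p)}$.

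The principal obstacle is keeping both the Gronwall stability factors and the accumulated two-step-to-one-step errors uniformly bounded in the number $m$ of dyadic pieces, which can be arbitrarily large. Both issues are resolved by the strict monotonicity of dyadic levels guaranteed by Lemma \ref{Lemma decompose an interval as union of dyadic intervals}, which yields the geometric decay $\omega(I_j)\le 2^{-k_j}\omega(s,t)$ and hence convergent series $\sum_j\omega(I_j)^\alpha\le C_\alpha\omega(s,t)^\alpha$ for every $\alpha>0$, independently of $m$.
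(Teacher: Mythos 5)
Your proposal is correct and rests on the same pillars as the paper's own proof --- the decomposition of Lemma \ref{Lemma decompose an interval as union of dyadic intervals} with its geometric decay, the dyadic hypothesis $\left( \ref{inner estimate on dyadic interval in Lemma from dyadic to non-dyadic}\right) $, the two-step/one-step comparison $\left( \ref{equ 2 in Lemma of ODE}\right) $ and the stability estimate of Lemma \ref{Lemma two ODE with different vector fields} --- but the bookkeeping is organized differently. You interpose the concatenated ODE path $\tilde{y}$ and propagate errors multiplicatively, through per-block flow-Lipschitz factors of size $1+C_{p}\omega \left( I_{j}\right) ^{1/p}$ whose product stays bounded because $\sum_{j}\omega \left( I_{j}\right) ^{1/p}\leq C_{p}\omega \left( s,t\right) ^{1/p}$. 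The paper never builds $\tilde{y}$: it splits $\left[ s,t\right] $ at the level-$\left( n_{0}-1\right) $ point $u$ and, on each monotone half, telescopes the defect $\Gamma ^{a,b}:=y_{b}-y_{1}^{a,b}$ directly over the nested tails $\left[ t_{j},t_{l}\right] $ using the two-step ODE $y^{t_{j-1},t_{j},t_{l}}$; the transfer of the initial-value discrepancy is absorbed additively, as $C_{p}\omega \left( t_{j},t_{l}\right) ^{1/p}\left\Vert \Gamma ^{t_{j-1},t_{j}}\right\Vert \leq C_{p}\omega \left( t_{j-1},t_{l}\right) ^{\theta +1/p}$, so no products of Gronwall factors are needed, and the two halves are then glued by one further three-term estimate at $u$. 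Your route gives a cleaner conceptual split (path versus concatenated ODEs, concatenated ODEs versus single ODE), at the price of one point you should make explicit: the Gronwall/Lipschitz factors are only valid while both solutions stay in a fixed bounded set, and the uniform boundedness of $\tilde{y}$ does not come directly from the hypothesis $\sup_{r}\left\Vert y_{r}\right\Vert \leq C_{p}$; it is, however, easily supplied by $\left\Vert \tilde{y}\left( t_{j}\right) \right\Vert \leq \left\Vert \tilde{y}\left( t_{j-1}\right) \right\Vert \left( 1+C_{p}\omega \left( I_{j}\right) ^{1/p}\right) $ (Lemma \ref{Lemma simple property of solution of ODE} together with the group structure) and the same geometric decay. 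With that addition, and with the base-point shift you already describe to reduce $\left( \ref{equ 2 in Lemma of ODE}\right) $ and Lemma \ref{Lemma two ODE with different vector fields} to initial value $1$, your exponents $\theta $ and $\frac{\left[ p\right] +1}{p}$ combine exactly as in the paper.
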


\begin{proof}
When $\left[ s,t\right] \subseteq \left[ s_{0},t_{0}\right] $ is non-dyadic,
we decompose $\left[ s,t\right] $ as the union of dyadic intervals as in
Lemma \ref{Lemma decompose an interval as union of dyadic intervals}. Denote
the level of biggest dyadic interval in $\left[ s,t\right] $ by $n_{0}$.
Then based on Lemma \ref{Lemma decompose an interval as union of dyadic
intervals}, there exists a level $n_{0}-1$ dyadic point $u\in \left[ s,t%
\right] $, such that the level of dyadic intervals to the left/right of $u$
is strictly increasing. Then%
\begin{equation}
\left\Vert y_{t}-y_{1}^{s,t}\right\Vert \leq \left\Vert
y_{t}-y_{1}^{u,t}\right\Vert +\left\Vert y_{u}-y_{1}^{s,u}\right\Vert
+\left\Vert \left( y_{1}^{s,t}-y_{s}\right) -\left( y_{1}^{u,t}-y_{u}\right)
-\left( y_{1}^{s,u}-y_{s}\right) \right\Vert \text{.}
\label{inner decompose s,t as s,u u,t}
\end{equation}%
We take $\left\Vert y_{t}-y_{1}^{u,t}\right\Vert $ as an example. The
estimation of $\left\Vert y_{u}-y_{1}^{s,u}\right\Vert $ is similar.

Denote the dyadic decomposition of $\left[ u,t\right] $ (as in Lemma \ref%
{Lemma decompose an interval as union of dyadic intervals}) by $\left[
t_{0},t_{1}\right] \cup \cdots \cup \left[ t_{l-1},t_{l}\right] $. For $%
j=1,\dots ,l-1$, let $y^{t_{j-1},t_{j},t_{l}}$ denote the solution of the
ODE: 
\begin{equation}
dy_{r}^{t_{j-1},t_{j},t_{l}}=\left\{ 
\begin{array}{cc}
\sum_{k=1}^{\left[ p\right] }F\left( f\right) ^{\circ k}\pi _{k}\left( \log
X_{t_{j-1},t_{j}}\right) \left( Id_{L^{\left[ p\right] }\left( \mathcal{U}%
\right) }\right) \left( y_{r}^{t_{j-1},t_{j},t_{l}}\right) dr, & r\in \left[
0,1\right] \\ 
\sum_{k=1}^{\left[ p\right] }F\left( f\right) ^{\circ k}\pi _{k}\left( \log
X_{t_{j},t_{l}}\right) \left( Id_{L^{\left[ p\right] }\left( \mathcal{U}%
\right) }\right) \left( y_{r}^{t_{j-1},t_{j},t_{l}}\right) dr, & r\in \left[
1,2\right]%
\end{array}%
\right. \text{, }y_{0}^{t_{j-1},t_{j},t_{l}}=y_{t_{j-1}}\text{.}
\label{inner definition of two steps ode}
\end{equation}%
Then 
\begin{eqnarray}
&&\left\Vert y_{t}-y_{1}^{u,t}\right\Vert
\label{inner estimate of difference on u,t} \\
&=&\left\Vert y_{t}-y_{u}-\left( y_{1}^{u,t}-y_{u}\right) \right\Vert  \notag
\\
&\leq &\sum_{j=0}^{l-1}\left\Vert y_{t_{j+1}}-y_{t_{j}}-\left(
y_{1}^{t_{j},t_{j+1}}-y_{t_{j}}\right) \right\Vert  \notag \\
&&+\sum_{j=1}^{l-1}\left\Vert \left(
y_{1}^{t_{j-1},t_{j}}-y_{t_{j-1}}\right) +\left(
y_{1}^{t_{j},t_{l}}-y_{t_{j}}\right) -\left(
y_{1}^{t_{j-1},t_{l}}-y_{t_{j-1}}\right) \right\Vert  \notag \\
&\leq &\sum_{j=0}^{l-1}\left\Vert y_{t_{j+1}}-y_{t_{j}}-\left(
y_{1}^{t_{j},t_{j+1}}-y_{t_{j}}\right) \right\Vert  \notag \\
&&+\sum_{j=1}^{l-1}\left( \left\Vert \left(
y_{2}^{t_{j-1},t_{j},t_{l}}-y_{1}^{t_{j-1},t_{j}}\right) -\left(
y_{1}^{t_{j},t_{l}}-y_{t_{j}}\right) \right\Vert +\left\Vert
y_{2}^{t_{j-1},t_{j},t_{l}}-y_{1}^{t_{j-1},t_{l}}\right\Vert \right) \text{.}
\notag
\end{eqnarray}

Since the level of $t_{j}$ is strictly lower than the level of $\left[
t_{j},t_{j+1}\right] $ and the level of $\left[ t_{j},t_{j+1}\right] $ is
strictly increasing as $j$ increases, we have, for $j=0,\dots ,l-1$,%
\begin{equation}
\omega \left( t_{j},t_{j+1}\right) \leq \omega \left( t_{j},t_{l}\right)
\leq \frac{1}{2}\omega \left( t_{j-1},t_{l}\right) \leq \cdots \leq \frac{1}{%
2^{j}}\omega \left( u,t\right) \text{.}
\label{inner estimate of non-dyadic interval}
\end{equation}%
Then since $\left\{ \left[ t_{j},t_{j+1}\right] \right\} _{j}$ are dyadic,
by using assumption $\left( \ref{inner estimate on dyadic interval in Lemma
from dyadic to non-dyadic}\right) $, we have 
\begin{eqnarray}
\sum_{j=0}^{l-1}\left\Vert y_{t_{j+1}}-y_{t_{j}}-\left(
y_{1}^{t_{j},t_{j+1}}-y_{t_{j}}\right) \right\Vert &\leq
&C_{p}\sum_{j=0}^{l-1}\omega \left( t_{j},t_{j+1}\right) ^{\theta }
\label{inner estimate of difference on u,t 1} \\
&\leq &C_{p}\left( \sum_{j=0}^{\infty }\left( \frac{1}{2}\right) ^{j\theta
}\right) \omega \left( u,t\right) ^{\theta }\leq C_{p,\theta }\omega \left(
u,t\right) ^{\theta }\text{.}  \notag
\end{eqnarray}

On the other hand, since $\sup_{r\in \left[ s_{0},t_{0}\right] }\left\Vert
y_{r}\right\Vert \leq C_{p}$, by using Lemma \ref{Lemma two ODE with
different vector fields} on p\pageref{Lemma two ODE with different vector
fields} (continuous dependence on initial value) and using the assumption on
dyadic interval $\left[ t_{j-1},t_{j}\right] $ at $\left( \ref{inner
estimate on dyadic interval in Lemma from dyadic to non-dyadic}\right) $, we
have%
\begin{equation}
\left\Vert \left( y_{2}^{t_{j-1},t_{j},t_{l}}-y_{1}^{t_{j-1},t_{j}}\right)
-\left( y_{1}^{t_{j},t_{l}}-y_{t_{j}}\right) \right\Vert \leq C_{p}\omega
\left( t_{j},t_{l}\right) ^{\frac{1}{p}}\left\Vert
y_{1}^{t_{j-1},t_{j}}-y_{t_{j}}\right\Vert \leq C_{p}\omega \left(
t_{j-1},t_{l}\right) ^{\theta +\frac{1}{p}}\text{.}
\label{inner estimate of non-dyadic interval lemma a}
\end{equation}%
Based on $\left( \ref{equ 2 in Lemma of ODE}\right) $ in Lemma \ref{Lemma
Euler expansion of solution of ODE and difference between one step and two
steps} on p\pageref{equ 2 in Lemma of ODE} (error between two-steps ODE and
one-step ODE) and $\sup_{r\in \left[ s_{0},t_{0}\right] }\left\Vert
y_{r}\right\Vert \leq C_{p}$, 
\begin{equation}
\left\Vert y_{2}^{t_{j-1},t_{j},t_{l}}-y_{1}^{t_{j-1},t_{l}}\right\Vert \leq
C_{p}\omega \left( t_{j-1},t_{l}\right) ^{\frac{\left[ p\right] +1}{p}}\text{%
.}  \label{inner estimate of non-dyadic interval lemma b}
\end{equation}%
Then, combining $\left( \ref{inner estimate of non-dyadic interval lemma a}%
\right) $, $\left( \ref{inner estimate of non-dyadic interval lemma b}%
\right) $ and $\left( \ref{inner estimate of non-dyadic interval}\right) $,
we have ($\omega \left( u,t\right) \leq \omega \left( s_{0},t_{0}\right)
\leq 1$)%
\begin{eqnarray}
&&\sum_{j=1}^{l-1}\left( \left\Vert \left(
y_{2}^{t_{j-1},t_{j},t_{l}}-y_{1}^{t_{j-1},t_{j}}\right) -\left(
y_{1}^{t_{j},t_{l}}-y_{t_{j}}\right) \right\Vert +\left\Vert
y_{2}^{t_{j-1},t_{j},t_{l}}-y_{1}^{t_{j-1},t_{l}}\right\Vert \right)
\label{inner estimate of difference on u,t 2} \\
&\leq &C_{p}\sum_{j=1}^{l-1}\omega \left( t_{j-1},t_{l}\right) ^{\left(
\theta +\frac{1}{p}\right) \wedge \left( \frac{\left[ p\right] +1}{p}\right)
}\leq C_{p}\left( \sum_{j=1}^{l-1}\left( \frac{1}{2^{j-1}}\right) ^{\left(
\theta +\frac{1}{p}\right) \wedge \left( \frac{\left[ p\right] +1}{p}\right)
}\right) \omega \left( u,t\right) ^{\left( \theta +\frac{1}{p}\right) \wedge
\left( \frac{\left[ p\right] +1}{p}\right) }  \notag \\
&\leq &C_{p,\theta }\omega \left( u,t\right) ^{\left( \theta +\frac{1}{p}%
\right) \wedge \left( \frac{\left[ p\right] +1}{p}\right) }  \notag
\end{eqnarray}

As a result, by combining $\left( \ref{inner estimate of difference on u,t}%
\right) $, $\left( \ref{inner estimate of difference on u,t 1}\right) $ and $%
\left( \ref{inner estimate of difference on u,t 2}\right) $, we have%
\begin{equation*}
\left\Vert y_{t}-y_{1}^{u,t}\right\Vert \leq C_{p,\theta }\omega \left(
u,t\right) ^{\theta \wedge \left( \frac{\left[ p\right] +1}{p}\right) }\text{%
.}
\end{equation*}%
Similarly, we have%
\begin{equation}
\left\Vert y_{u}-y_{1}^{s,u}\right\Vert \leq C_{p,\theta }\omega \left(
s,u\right) ^{\theta \wedge \left( \frac{\left[ p\right] +1}{p}\right) }\text{%
.}  \label{inner estimate on [s,u]}
\end{equation}%
On the other hand, with $y^{s,u,t}$ defined at $\left( \ref{inner definition
of two steps ode}\right) $ and by using $\left( \ref{inner estimate on [s,u]}%
\right) $, similar estimate as at $\left( \ref{inner estimate of non-dyadic
interval lemma a}\right) $ and $\left( \ref{inner estimate of non-dyadic
interval lemma b}\right) $ lead to 
\begin{eqnarray*}
&&\left\Vert \left( y_{1}^{s,t}-y_{s}\right) -\left(
y_{1}^{u,t}-y_{u}\right) -\left( y_{1}^{s,u}-y_{s}\right) \right\Vert \\
&\leq &\left\Vert y_{2}^{s,u,t}-y_{1}^{s,u}-\left( y_{1}^{u,t}-y_{u}\right)
\right\Vert +\left\Vert y_{2}^{s,u,t}-y_{1}^{s,t}\right\Vert \\
&\leq &C_{p}\omega \left( u,t\right) ^{\frac{1}{p}}\left\Vert
y_{1}^{s,u}-y_{u}\right\Vert +C_{p}\omega \left( s,t\right) ^{\frac{\left[ p%
\right] +1}{p}} \\
&\leq &C_{p,\theta }\omega \left( s,t\right) ^{\theta \wedge \left( \frac{%
\left[ p\right] +1}{p}\right) }\text{,}
\end{eqnarray*}%
As a result, combined with $\left( \ref{inner decompose s,t as s,u u,t}%
\right) $, we have, for any $\left\{ s,t\right\} \in \Lambda \left( n\right) 
$, $\left[ s,t\right] \subseteq \left[ s_{0},t_{0}\right] $,%
\begin{equation}
\left\Vert y_{t}-y_{1}^{s,t}\right\Vert \leq C_{p,\theta }\omega \left(
s,t\right) ^{\theta \wedge \left( \frac{\left[ p\right] +1}{p}\right) }\text{%
.}  \label{inner estimate of difference}
\end{equation}
\end{proof}

\begin{lemma}
\label{Lemma dyadic paths are uniformly bounded}For $\gamma >p\geq 1$,
suppose $X\in C^{p-var}\left( \left[ 0,T\right] ,G^{\left[ p\right] }\left( 
\mathcal{V}\right) \right) $, $f\in L\left( \mathcal{V},C^{\gamma }\left( 
\mathcal{U},\mathcal{U}\right) \right) $ and $\xi \in G^{\left[ p\right]
}\left( \mathcal{U}\right) $. Define control 
\begin{equation*}
\omega \left( s,t\right) :=\left\vert f\right\vert _{\gamma }^{p}\left\Vert
X\right\Vert _{p-var,\left[ s,t\right] }^{p}\text{, }\forall 0\leq s\leq
t\leq T\text{.}
\end{equation*}%
Based on $\omega $, define dyadic partitions $\Lambda \left( n\right)
:=\left\{ t_{j}^{n}\right\} $, $n\geq 0$, as in Notation \ref{Notation
dyadic partition of a control}. For $n\geq 0$, let $y^{n}:\left[ 0,T\right]
\rightarrow G^{\left[ p\right] }\left( \mathcal{U}\right) $ be the solution
of the ordinary differential equation 
\begin{eqnarray}
y_{0}^{n} &=&\xi ,  \label{Definition of yn} \\
dy_{t}^{n} &=&\sum_{k=1}^{\left[ p\right] }F\left( f\right) ^{\circ k}\pi
_{k}\left( \log X_{t_{j}^{n},t_{j+1}^{n}}\right) \left( Id_{L^{\left[ p%
\right] }\left( \mathcal{U}\right) }\right) \left( y_{t}^{n}\right) \frac{dt%
}{t_{j+1}^{n}-t_{j}^{n}}\text{, }t\in \left[ t_{j}^{n},t_{j+1}^{n}\right] 
\text{.}  \notag
\end{eqnarray}%
Then there exists $C_{p}$ such that, ($y_{s,t}^{n}:=\left( y_{s}^{n}\right)
^{-1}\otimes y_{t}^{n}$) 
\begin{equation}
\left\vert \!\left\vert \!\left\vert y_{s,t}^{n}\right\vert \!\right\vert
\!\right\vert \leq C_{p}\omega \left( s,t\right) ^{\frac{1}{p}}\text{, }%
\forall \left\{ s,t\right\} \subseteq \Lambda \left( n\right) \,\text{, }%
\omega \left( s,t\right) \leq 1\text{, }\forall n\geq 1\text{.}
\label{estimate uniform bound on dyadic solution on small interval}
\end{equation}%
Moreover, for $n\geq 1$, $\left\{ s_{0},t_{0},s,t\right\} \subseteq \Lambda
\left( n\right) $ satisfying $\left[ s,t\right] \subseteq \left[ s_{0},t_{0}%
\right] $ and $\omega \left( s_{0},t_{0}\right) \leq 1$, let $y^{n,s,t}:%
\left[ 0,1\right] \rightarrow G^{\left[ p\right] }\left( \mathcal{U}\right) $
denote the solution of the ordinary differential equation%
\begin{equation}
dy_{u}^{n,s,t}=\sum_{k=1}^{\left[ p\right] }\left( F\left( f\left( \cdot
+\pi _{1}\left( y_{s_{0}}^{n}\right) \right) \right) \right) ^{\circ k}\pi
_{k}\left( \log X_{s,t}\right) \left( Id_{L^{\left[ p\right] }\left( 
\mathcal{U}\right) }\right) \left( y_{u}^{n,s,t}\right) du,u\in \left[ 0,1%
\right] ,y_{0}^{n,s,t}=y_{s_{0},s}^{n}\text{.}
\label{inner definition of y(s,t)}
\end{equation}%
Then there exists $C_{p}$ such that, 
\begin{equation}
\left\Vert y_{s_{0},t}^{n}-y_{1}^{n,s,t}\right\Vert \leq C_{p}\omega \left(
s,t\right) ^{\frac{\left[ p\right] +1}{p}}\text{, }\forall \left\{
s_{0},t_{0},s,t\right\} \subseteq \Lambda \left( n\right) \text{, }\left[ s,t%
\right] \subseteq \left[ s_{0},t_{0}\right] \text{, }\omega \left(
s_{0},t_{0}\right) \leq 1\text{, }\forall n\geq 1\text{.}
\label{estimate uniform bound on difference on small interval}
\end{equation}
\end{lemma}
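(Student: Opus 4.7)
The plan is to establish both estimates in tandem, starting from a single dyadic interval and then extending by the dyadic machinery already developed. I begin with a normalization: the ODE \eqref{Definition of yn} is invariant under simultaneously replacing $f$ by $f/|f|_\gamma$ and $X$ by the dilation $\delta_{|f|_\gamma}X$, because $F(\lambda f)^{\circ k}$ scales as $\lambda^k F(f)^{\circ k}$ and matches $\pi_k(\log\delta_\lambda X)=\lambda^k\pi_k(\log X)$. Hence I may assume $|f|_\gamma=1$, so that $\omega(s,t)=\|X\|_{p-var,[s,t]}^p$ and $\left\vert\!\left\vert\!\left\vert X_{s,t}\right\vert\!\right\vert\!\right\vert\le C_p\omega(s,t)^{1/p}$.

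On a single level-$n$ dyadic interval $[t_j^n,t_{j+1}^n]$, the affine time change $u=(t-t_j^n)/(t_{j+1}^n-t_j^n)$ converts \eqref{Definition of yn} into exactly the ODE of Lemma \ref{Lemma simple property of solution of ODE} with driver $g=X_{t_j^n,t_{j+1}^n}$. When $\omega(t_j^n,t_{j+1}^n)\le 1$, so that $\left\vert\!\left\vert\!\left\vert g\right\vert\!\right\vert\!\right\vert\le 1$, that lemma immediately yields \eqref{estimate uniform bound on dyadic solution on small interval} on single dyadic intervals, and, by the explicit level-by-level formula \eqref{explicit form of levels of ODE}, the pointwise bound $\sup_{r\in[s_0,t_0]}\|y^n_r\|\le C_p$ needed as hypothesis for Lemma \ref{Lemma from dyadic difference to non-dyadic differene}. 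For the single-dyadic version of \eqref{estimate uniform bound on difference on small interval} on $[s,t]=[t_j^n,t_{j+1}^n]$, I compare $y^n_{s_0,t}=y^n_{s_0,s}\otimes y^n_{s,t}$ with $y^{n,s,t}_1$: both admit a high-order Euler expansion via Lemma \ref{Lemma Euler expansion of solution of ODE and difference between one step and two steps}, their leading terms agree thanks to the hidden shuffle identity noted after Definition \ref{Definition of solution of RDE}, which encodes precisely how left-multiplication by $y^n_{s_0,s}$ in the untranslated frame corresponds to translating $f$ by $\pi_1(y^n_{s_0})$ in the translated frame, and the Euler remainders on both sides are $O(\omega(s,t)^{([p]+1)/p})$.

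With both estimates established on single level-$n$ dyadic intervals and the uniform bound on $y^n$ in hand, I invoke Lemma \ref{Lemma from dyadic difference to non-dyadic differene} with $\theta=([p]+1)/p$ to lift \eqref{estimate uniform bound on difference on small interval} to all pairs $\{s,t\}\subseteq\Lambda(n)$ with $[s,t]\subseteq[s_0,t_0]$ and $\omega(s_0,t_0)\le 1$. Finally, the general-dyadic-pair version of \eqref{estimate uniform bound on dyadic solution on small interval} follows by combining \eqref{estimate uniform bound on difference on small interval} just obtained with the Euler expansion of $y^{n,s,t}_1$: writing $y^n_{s,t}$ as the Euler polynomial $\sum_k F(f(\cdot+\pi_1(y^n_{s_0})))^{\circ k}\pi_k(X_{s,t})(Id)(1)$ plus an $O(\omega(s,t)^{([p]+1)/p})$ error and using $\left\vert\!\left\vert\!\left\vert X_{s,t}\right\vert\!\right\vert\!\right\vert\le C_p\omega(s,t)^{1/p}$ together with $\omega(s,t)\le 1$ delivers $\left\vert\!\left\vert\!\left\vert y^n_{s,t}\right\vert\!\right\vert\!\right\vert\le C_p\omega(s,t)^{1/p}$.

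The main obstacle will be the single-dyadic-interval version of \eqref{estimate uniform bound on difference on small interval}: reconciling the Euler expansion of $y^n_{s_0,s}\otimes y^n_{s,t}$, obtained from the untranslated ODE starting at $y^n_s$ and then left-multiplied by $y^n_{s_0,s}$, with that of $y^{n,s,t}_1$, obtained from the ODE with vector field translated by $\pi_1(y^n_{s_0})$ and started at $y^n_{s_0,s}$, demands careful term-by-term matching whose algebraic core is the shuffle identity. Once this matching is established, the remainder of the argument is a routine combination of the preparation lemmas, with Lemma \ref{Lemma from dyadic difference to non-dyadic differene} doing the heavy lifting for extending from dyadic intervals to arbitrary dyadic pairs.
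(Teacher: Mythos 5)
There is a genuine gap, and it sits exactly where the real work of this lemma lies. Your plan establishes \eqref{estimate uniform bound on difference on small interval} only on the \emph{finest} dyadic intervals $[t_j^n,t_{j+1}^n]$ (where in fact the difference $\Gamma^{s,t}:=y_{s_0,t}^{n}-y_{1}^{n,s,t}$ vanishes identically, since on a level-$n$ interval the translated ODE started from $y^n_{s_0,s}$ reproduces the increment of $y^n$ exactly), and then you invoke Lemma \ref{Lemma from dyadic difference to non-dyadic differene} with $\theta=([p]+1)/p$ to pass to general pairs in $\Lambda(n)$. But that lemma's hypothesis is the estimate on dyadic intervals of \emph{every} level $l=0,1,\dots,n$, not just level $n$; it converts an all-levels dyadic estimate into one for non-dyadic pairs of dyadic points, it does not propagate an estimate from the finest level upward. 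The propagation from level-$n$ intervals to coarser dyadic intervals is precisely the core of the argument: one needs the near-additivity bound on $\Gamma^{s,t}-\Gamma^{s,u}-\Gamma^{u,t}$ (obtained from Lemma \ref{Lemma Euler expansion of solution of ODE and difference between one step and two steps} and Lemma \ref{Lemma two ODE with different vector fields}), followed by recursive bisection down to the level-$n$ intervals where $\Gamma=0$, and this must be done level-by-level in the tensor degree $k$, because the constant in the near-additivity bound for $\pi_k(\Gamma^{s,t})$ involves the uniform bound $M_{k-1}$ on the lower tensor levels of $y^n$. None of this appears in your proposal.

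Relatedly, you assert that the uniform bound $\sup_{r\in[s_0,t_0]}\|y_r^n\|\le C_p$ (uniform in $n$) follows from the explicit level-by-level formula and the single-interval Lemma \ref{Lemma simple property of solution of ODE}. It does not: a single level-$n$ interval contributes only $\omega(t_j^n,t_{j+1}^n)^{1/p}\le 2^{-n/p}$, but there are $2^n$ of them in $[s_0,t_0]$, and summing increments gives $2^{n(1-1/p)}$, which diverges for $p>1$. The uniform bound is essentially equivalent to \eqref{estimate uniform bound on dyadic solution on small interval} for general dyadic pairs and is one of the conclusions to be proved; in the paper it is obtained \emph{simultaneously} with \eqref{estimate uniform bound on difference on small interval} by induction on $k=1,\dots,[p]$ (the $\Gamma$-estimate at level $k$ uses $M_{k-1}$, and $M_k$ is then bounded using the level-$k$ $\Gamma$-estimate together with the ODE bound and Lemma \ref{Lemma dyadic to non-dyadic}). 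Because your proposal takes this bound as an input to Lemma \ref{Lemma from dyadic difference to non-dyadic differene} and to the comparison arguments, the logic as written is circular. The closing step you describe (deducing the group-norm bound on $y^n_{s,t}$ from the $\Gamma$-estimate plus the Euler expansion of $y_1^{n,s,t}$) is fine, but only after the intertwined induction and the bisection argument have been supplied.
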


\begin{proof}
We assume $\left\vert f\right\vert _{\gamma }=1$. Otherwise, we replace $f$
and $X$ by $\left\vert f\right\vert _{\gamma }^{-1}f$ and $\delta
_{\left\vert f\right\vert _{\gamma }}X$ respectively. In that case, both $%
y^{n}$ and $y^{n,s,t}$ will stay unchanged.

Fix $n\geq 1$ and $s_{0},t_{0}\in \Lambda \left( n\right) $ satisfying $%
\omega \left( s_{0},t_{0}\right) \leq 1$.

For $\left\{ s,u,t\right\} \subseteq \Lambda \left( n\right) $, $s_{0}\leq
s\leq u\leq t\leq t_{0}$, let $y^{n,s,u,t}:\left[ 0,2\right] \rightarrow G^{%
\left[ p\right] }\left( \mathcal{U}\right) $ be the solution of the ordinary
differential equation%
\begin{equation}
dy_{r}^{n,s,u,t}=\left\{ 
\begin{array}{cc}
\sum_{k=1}^{\left[ p\right] }F\left( f\left( \cdot +\pi _{1}\left(
y_{s_{0}}^{n}\right) \right) \right) ^{\circ k}\pi _{k}\left( \log
X_{s,u}\right) \left( Id_{L^{\left[ p\right] }\left( \mathcal{U}\right)
}\right) \left( y_{r}^{n,s,u,t}\right) dr, & r\in \left[ 0,1\right] \\ 
\sum_{k=1}^{\left[ p\right] }F\left( f\left( \cdot +\pi _{1}\left(
y_{s_{0}}^{n}\right) \right) \right) ^{\circ k}\pi _{k}\left( \log
X_{u,t}\right) \left( Id_{L^{\left[ p\right] }\left( \mathcal{U}\right)
}\right) \left( y_{r}^{n,s,u,t}\right) dr, & r\in \left[ 1,2\right]%
\end{array}%
\right. \text{, }y_{0}^{n,s,u,t}=y_{s_{0},s}^{n}\text{.}
\label{inner definition of y(s,u,t) in Lemma of uniform bound of dyadic ODEs}
\end{equation}

To simply the notation, we omit $n$ and denote%
\begin{equation*}
y^{s,t}:=y^{n,s,t}\text{ and }y^{s,u,t}:=y^{n,s,u,t}.
\end{equation*}%
Yet the coefficients below are all independent of $n$. For $\left\{
s,t\right\} \subseteq \Lambda \left( n\right) $, $\left[ s,t\right]
\subseteq \left[ s_{0},t_{0}\right] $, denote%
\begin{equation}
\Gamma
^{s,t}:=y_{s_{0},t}^{n}-y_{1}^{s,t}=y_{s_{0},t}^{n}-y_{s_{0},s}^{n}-\left(
y_{1}^{s,t}-y_{s_{0},s}^{n}\right) \text{.}
\label{inner definition of Gamma(s,t)}
\end{equation}%
Based on the definition of $y^{n}$ at $\left( \ref{Definition of yn}\right) $
and the definition of $y^{s,t}$ at $\left( \ref{inner definition of y(s,t)}%
\right) $, it can be checked that, on any level-$n$ dyadic interval $\left[
t_{j}^{n},t_{j+1}^{n}\right] \subseteq \left[ s_{0},t_{0}\right] $, 
\begin{equation}
\Gamma
^{t_{j}^{n},t_{j+1}^{n}}=y_{s_{0},t_{j+1}^{n}}^{n}-y_{1}^{t_{j}^{n},t_{j+1}^{n}}=0%
\text{.}  \label{inner Gamma vanishes on small dyadic interval}
\end{equation}

Suppose $u\in \left( s,t\right) $, $u\in \Lambda \left( n\right) $, then%
\begin{eqnarray}
\Gamma ^{s,u}+\Gamma ^{u,t}-\Gamma ^{s,t}
&=&y_{1}^{s,t}-y_{s_{0},s}^{n}-\left( y_{1}^{s,u}-y_{s_{0},s}^{n}\right)
-\left( y_{1}^{u,t}-y_{s_{0},u}^{n}\right)
\label{inner relationship of Gamma (s,t) after bisection} \\
&=&\left( y_{1}^{s,t}-y_{2}^{s,u,t}\right) +\left(
y_{2}^{s,u,t}-y_{1}^{s,u}-\left( y_{1}^{u,t}-y_{s_{0},u}^{n}\right) \right) 
\notag
\end{eqnarray}

For $k=0,1,\dots ,\left[ p\right] $, denote%
\begin{equation}
M_{k}:=\max_{j=0,1,\dots ,k}\,\sup_{\omega \left( s_{0},s\right) \leq 1,s\in
\Lambda \left( n\right) }\,\sup_{n\geq 1}\left\Vert \pi _{j}\left(
y_{s_{0},s}^{n}\right) \right\Vert \text{.}  \label{inner definition of Mk}
\end{equation}

We first prove that $M_{\left[ p\right] }\leq C_{p}$. It is clear that $%
M_{0}=1$. Based on Lemma \ref{Lemma Euler expansion of solution of ODE and
difference between one step and two steps} on p\pageref{Lemma Euler
expansion of solution of ODE and difference between one step and two steps}
(error between two-steps ODE and one-step ODE), we have,%
\begin{equation}
\left\Vert \pi _{k}\left( y_{2}^{s,u,t}-y_{1}^{s,t}\right) \right\Vert \leq
C_{p}M_{k-1}\omega \left( s,t\right) ^{\frac{\left[ p\right] +1}{p}}.
\label{inner lemma A in Lemma uniform bound on dyadic intervals}
\end{equation}%
On the other hand,%
\begin{equation}
y_{2}^{s,u,t}-y_{1}^{s,u}-\left( y_{1}^{u,t}-y_{s_{0},u}^{n}\right)
=y_{1}^{s,u}\otimes \left( \left( y_{1}^{s,u}\right) ^{-1}\otimes
y_{2}^{s,u,t}-1\right) -y_{s_{0},u}^{n}\otimes \left( \left(
y_{s_{0},u}^{n}\right) ^{-1}\otimes y_{1}^{u,t}-1\right) \text{.}
\label{inner re-rewriting in Lemma uniform bound on dyadic intervals}
\end{equation}%
Based on the explicit expression of $F\left( f\right) ^{\circ k}$ in Lemma %
\ref{Lemma explicit form of Fcirck} on p\pageref{Lemma explicit form of
Fcirck}, $\left( y_{1}^{s,u}\right) ^{-1}\otimes y_{r+1}^{s,u,t}-1$ and $%
\left( y_{s_{0},u}^{n}\right) ^{-1}\otimes y_{r}^{u,t}-1$, $r\in \left[ 0,1%
\right] $, are respectively the solution of the ODE 
\begin{eqnarray*}
dy_{r} &=&\tsum\nolimits_{k=1}^{\left[ p\right] }F\left( f\left( \cdot +\pi
_{1}\left( y_{1}^{s,u}\right) +\pi _{1}\left( y_{s_{0}}^{n}\right) \right)
\right) ^{\circ k}\left( \log X_{u,t}\right) \left( Id_{L^{\left[ p\right]
}\left( \mathcal{U}\right) }\right) \left( y_{r}\right) dr\text{, }r\in %
\left[ 0,1\right] \text{, }y_{0}=0\text{,} \\
dy_{r} &=&\tsum\nolimits_{k=1}^{\left[ p\right] }F\left( f\left( \cdot +\pi
_{1}\left( y_{s_{0},u}^{n}\right) +\pi _{1}\left( y_{s_{0}}^{n}\right)
\right) \right) ^{\circ k}\left( \log X_{u,t}\right) \left( Id_{L^{\left[ p%
\right] }\left( \mathcal{U}\right) }\right) \left( y_{r}\right) dr\text{, }%
r\in \left[ 0,1\right] \text{, }y_{0}=0\text{.}
\end{eqnarray*}%
It is clear that 
\begin{equation*}
\pi _{0}\left( \left( y_{1}^{s,u}\right) ^{-1}\otimes y_{2}^{s,u,t}-1\right)
=\pi _{0}\left( \left( y_{s_{0},u}^{n}\right) ^{-1}\otimes
y_{1}^{u,t}-1\right) =0\text{.}
\end{equation*}%
Based on Lemma \ref{Lemma two ODE with different vector fields} on p\pageref%
{Lemma two ODE with different vector fields}, we have, for $k=1,\dots ,\left[
p\right] $,%
\begin{eqnarray}
&&\left\Vert \pi _{k}\left( \left( y_{1}^{s,u}\right) ^{-1}\otimes
y_{2}^{s,u,t}-1\right) -\pi _{k}\left( \left( y_{s_{0},u}^{n}\right)
^{-1}\otimes y_{1}^{u,t}-1\right) \right\Vert
\label{inner estimate lemma B in Lemma of uniform bound on dyadic intervals 2}
\\
&\leq &C_{p}\omega \left( u,t\right) ^{\frac{k}{p}}\left\vert f\left( \cdot
+\pi _{1}\left( y_{1}^{s,u}\right) +\pi _{1}\left( y_{s_{0}}^{n}\right)
\right) -f\left( \cdot +\pi _{1}\left( y_{s_{0},u}^{n}\right) +\pi
_{1}\left( y_{s_{0}}^{n}\right) \right) \right\vert _{\left[ p\right] -1} 
\notag \\
&\leq &C_{p}\omega \left( u,t\right) ^{\frac{k}{p}}\left\Vert \pi _{1}\left(
y_{1}^{s,u}\right) -\pi _{1}\left( y_{s_{0},u}^{n}\right) \right\Vert
=C_{p}\omega \left( u,t\right) ^{\frac{k}{p}}\left\Vert \pi _{1}\left(
\Gamma ^{s,u}\right) \right\Vert \text{,}  \notag
\end{eqnarray}%
and based on $\left( \ref{bound on y on small interval}\right) $ on p\pageref%
{bound on y on small interval} in\ Lemma \ref{Lemma simple property of
solution of ODE},%
\begin{equation}
\left\Vert \pi _{k}\left( \left( y_{1}^{s,u}\right) ^{-1}\otimes
y_{2}^{s,u,t}-1\right) \right\Vert \leq C_{p}\omega \left( u,t\right) ^{%
\frac{k}{p}}\text{.}
\label{inner estimate lemma B in Lemma of uniform bound on dyadic intervals 1}
\end{equation}%
Then by combining $\left( \ref{inner definition of Gamma(s,t)}\right) $, $%
\left( \ref{inner definition of Mk}\right) $, $\left( \ref{inner
re-rewriting in Lemma uniform bound on dyadic intervals}\right) $, $\left( %
\ref{inner estimate lemma B in Lemma of uniform bound on dyadic intervals 2}%
\right) $ and $\left( \ref{inner estimate lemma B in Lemma of uniform bound
on dyadic intervals 1}\right) $, we have%
\begin{eqnarray}
&&\left\Vert \pi _{k}\left( y_{2}^{s,u,t}-y_{1}^{s,u}-\left(
y_{1}^{u,t}-y_{s_{0},u}^{n}\right) \right) \right\Vert
\label{inner estimate lemma B in Lemma of uniform bound on dyadic intervals}
\\
&\leq &\sum_{j=1}^{k-1}\left\Vert \pi _{j}\left( y_{1}^{s,u}\right) -\pi
_{j}\left( y_{s_{0},u}^{n}\right) \right\Vert \left\Vert \pi _{k-j}\left(
\left( y_{1}^{s,u}\right) ^{-1}\otimes y_{2}^{s,u,t}-1\right) \right\Vert 
\notag \\
&&+\sum_{j=0}^{k-1}\left\Vert \pi _{j}\left( y_{s_{0},u}^{n}\right)
\right\Vert \left\Vert \pi _{k-j}\left( \left( y_{1}^{s,u}\right)
^{-1}\otimes y_{2}^{s,u,t}-1\right) -\pi _{k-j}\left( \left(
y_{s_{0},u}^{n}\right) ^{-1}\otimes y_{1}^{u,t}-1\right) \right\Vert  \notag
\\
&\leq &C_{p}\sum_{j=1}^{k-1}\left\Vert \pi _{j}\left( \Gamma ^{s,u}\right)
\right\Vert \omega \left( u,t\right) ^{\frac{k-j}{p}}+C_{p}%
\sum_{j=0}^{k-1}M_{j}\omega \left( u,t\right) ^{\frac{k-j}{p}}\left\Vert \pi
_{1}\left( \Gamma ^{s,u}\right) \right\Vert  \notag \\
&\leq &C_{p}\sum_{j=1}^{k-1}\omega \left( u,t\right) ^{\frac{k-j}{p}%
}\left\Vert \pi _{j}\left( \Gamma ^{s,u}\right) \right\Vert
+C_{p}M_{k-1}\omega \left( u,t\right) ^{\frac{1}{p}}\left\Vert \pi
_{1}\left( \Gamma ^{s,u}\right) \right\Vert  \notag
\end{eqnarray}%
As a result, combining $\left( \ref{inner relationship of Gamma (s,t) after
bisection}\right) $, $\left( \ref{inner lemma A in Lemma uniform bound on
dyadic intervals}\right) $ and $\left( \ref{inner estimate lemma B in Lemma
of uniform bound on dyadic intervals}\right) $, we have 
\begin{eqnarray}
&&\left\Vert \pi _{k}\left( \Gamma ^{s,t}\right) -\pi _{k}\left( \Gamma
^{s,u}\right) -\pi _{k}\left( \Gamma ^{u,t}\right) \right\Vert
\label{inner induction of Gamma on level k} \\
&\leq &C_{p}\left( M_{k-1}\omega \left( s,t\right) ^{\frac{\left[ p\right] +1%
}{p}}+M_{k-1}\omega \left( u,t\right) ^{\frac{1}{p}}\left\Vert \pi
_{1}\left( \Gamma ^{s,u}\right) \right\Vert +\sum_{j=1}^{k-1}\omega \left(
s,t\right) ^{\frac{k-j}{p}}\left\Vert \pi _{j}\left( \Gamma ^{s,u}\right)
\right\Vert \right) \text{.}  \notag
\end{eqnarray}

In particular, since $M_{0}=1$, we have%
\begin{equation*}
\left\Vert \pi _{1}\left( \Gamma ^{s,t}\right) -\pi _{1}\left( \Gamma
^{s,u}\right) -\pi _{1}\left( \Gamma ^{u,t}\right) \right\Vert \leq
C_{p}\left( \omega \left( s,t\right) ^{\frac{\left[ p\right] +1}{p}}+\omega
\left( u,t\right) ^{\frac{1}{p}}\left\Vert \pi _{1}\left( \Gamma
^{s,u}\right) \right\Vert \right) \text{.}
\end{equation*}%
When $\left[ s,t\right] \subseteq \left[ s_{0},t_{0}\right] $ is dyadic, by
recursively bisecting $\left[ s,t\right] $, we have%
\begin{eqnarray*}
\left\Vert \pi _{1}\left( \Gamma ^{s,t}\right) \right\Vert &\leq &\left(
\sum_{k=0}^{n}\left( \prod\limits_{j=0}^{k}\left( 1+2^{-\frac{j}{p}%
}C_{p}\omega \left( s,t\right) ^{\frac{1}{p}}\right) \right) \left( \frac{1}{%
2}\right) ^{\left( \frac{\left[ p\right] +1}{p}-1\right) k}\right) \omega
\left( s,t\right) ^{\frac{\left[ p\right] +1}{p}} \\
&&+\left( \prod\limits_{j=0}^{n}\left( 1+2^{-\frac{j}{p}}C_{p}\omega \left(
s,t\right) ^{\frac{1}{p}}\right) \right) \left( \sum_{\left[
t_{j}^{n},t_{j+1}^{n}\right] \subseteq \left[ s,t\right] }\left\Vert \pi
_{1}\left( \Gamma ^{t_{j}^{n},t_{j+1}^{n}}\right) \right\Vert \right) \text{.%
}
\end{eqnarray*}%
By using $\Gamma ^{t_{j}^{n},t_{j+1}^{n}}=0$ as at $\left( \ref{inner Gamma
vanishes on small dyadic interval}\right) $ and that $\omega \left(
s,t\right) \leq \omega \left( s_{0},t_{0}\right) \leq 1$, we have%
\begin{eqnarray}
\left\Vert \pi _{1}\left( \Gamma ^{s,t}\right) \right\Vert &\leq &\left(
\sum_{k=0}^{n}\left( \prod\limits_{j=0}^{k}\left( 1+2^{-\frac{j}{p}}\omega
\left( s,t\right) ^{\frac{1}{p}}\right) \right) \left( \frac{1}{2}\right)
^{\left( \frac{\left[ p\right] +1}{p}-1\right) k}\right) \omega \left(
s,t\right) ^{\frac{\left[ p\right] +1}{p}}
\label{inner estimate on dyadic interval} \\
&\leq &\exp \left( \frac{2^{\frac{1}{p}}}{2^{\frac{1}{p}}-1}\right) \frac{2^{%
\frac{\left[ p\right] +1}{p}-1}}{2^{\frac{\left[ p\right] +1}{p}-1}-1}\omega
\left( s,t\right) ^{\frac{\left[ p\right] +1}{p}}=C_{p}\omega \left(
s,t\right) ^{\frac{\left[ p\right] +1}{p}}\text{.}  \notag
\end{eqnarray}%
Thus, for any dyadic interval $\left[ s,t\right] \subseteq \left[ s_{0},t_{0}%
\right] $, we have%
\begin{equation*}
\left\Vert \pi _{1}\left( \Gamma ^{s,t}\right) \right\Vert \leq C_{p}\omega
\left( s,t\right) ^{\frac{\left[ p\right] +1}{p}}\text{.}
\end{equation*}

Then we prove $M_{1}\leq C_{p}$. Based on the explicit expression of $%
F\left( f\right) ^{\circ k}$ as in Lemma \ref{Lemma explicit form of Fcirck}%
, $\left( y_{s_{0},s}^{n}\right) ^{-1}\otimes y_{1}^{s,t}-1$ is the solution
of the ODE 
\begin{equation*}
dy_{r}=\sum_{k=1}^{\left[ p\right] }F\left( f\left( \cdot +\pi _{1}\left(
y_{s_{0},s}^{n}\right) +\pi _{1}\left( y_{s_{0}}^{n}\right) \right) \right)
^{\circ k}\pi _{k}\left( \log X_{s,t}\right) \left( Id_{L^{\left[ p\right]
}\left( \mathcal{U}\right) }\right) \left( y_{r}\right) dr,r\in \left[ 0,1%
\right] ,y_{0}=0\text{.}
\end{equation*}%
Based on $\left( \ref{bound on y on small interval}\right) $ in Lemma \ref%
{Lemma simple property of solution of ODE} on p\pageref{bound on y on small
interval}, 
\begin{equation}
\pi _{0}\left( \left( y_{s_{0},s}^{n}\right) ^{-1}\otimes
y_{1}^{s,t}-1\right) =0\text{ and }\left\Vert \pi _{k}\left( \left(
y_{s_{0},s}^{n}\right) ^{-1}\otimes y_{1}^{s,t}-1\right) \right\Vert \leq
C_{p}\omega \left( s,t\right) ^{\frac{k}{p}}\text{, }k=1,2,\dots ,\left[ p%
\right] \text{.}  \label{inner estimate of ode part}
\end{equation}%
As a result, we have, for any dyadic interval $\left[ s,t\right] \subseteq %
\left[ s_{0},t_{0}\right] $,%
\begin{eqnarray}
\left\Vert \pi _{1}\left( y_{s_{0},t}^{n}\right) -\pi _{1}\left(
y_{s_{0},s}^{n}\right) \right\Vert &\leq &\left\Vert \pi _{1}\left( \Gamma
^{s,t}\right) \right\Vert +\left\Vert \pi _{1}\left(
y_{1}^{s,t}-y_{s_{0},s}^{n}\right) \right\Vert
\label{inner estimate of the first level} \\
&=&\left\Vert \pi _{1}\left( \Gamma ^{s,t}\right) \right\Vert +\left\Vert
\pi _{1}\left( \left( y_{s_{0},s}^{n}\right) ^{-1}\otimes
y_{1}^{s,t}-1\right) \right\Vert  \notag \\
&\leq &C_{p}\omega \left( s,t\right) ^{\frac{\left[ p\right] +1}{p}%
}+C_{p}\omega \left( s,t\right) ^{\frac{1}{p}}\leq C_{p}\omega \left(
s,t\right) ^{\frac{1}{p}}\text{.}  \notag
\end{eqnarray}%
Then similar estimate holds for non-dyadic intervals based on Lemma \ref%
{Lemma dyadic to non-dyadic} on p\pageref{Lemma dyadic to non-dyadic}, and
we have, for any $\left\{ s,t\right\} \in \Lambda \left( n\right) $, $\left[
s,t\right] \subseteq \left[ s_{0},t_{0}\right] $, 
\begin{equation}
\left\Vert \pi _{1}\left( y_{s_{0},t}^{n}\right) -\pi _{1}\left(
y_{s_{0},s}^{n}\right) \right\Vert \leq C_{p}\omega \left( s,t\right) ^{%
\frac{1}{p}}\text{.}  \label{inner obtain non-dyadic by dyadic}
\end{equation}%
As a result,%
\begin{equation*}
M_{1}=\,\sup_{\omega \left( s_{0},s\right) \leq 1,s\in \Lambda \left(
n\right) }\,\sup_{n\geq 1}\left\Vert \pi _{1}\left( y_{s_{0},s}^{n}\right)
\right\Vert \leq C_{p}\text{.}
\end{equation*}

For $k=2,\dots ,\left[ p\right] $, assume%
\begin{equation*}
M_{k-1}\leq C_{p}\text{,}
\end{equation*}%
and assume that, for any dyadic $\left[ s,t\right] \subseteq \left[
s_{0},t_{0}\right] $,%
\begin{equation*}
\left\Vert \pi _{j}\left( \Gamma ^{s,t}\right) \right\Vert \leq C_{p}\omega
\left( s,t\right) ^{\frac{\left[ p\right] +1}{p}}\text{, }j=1,2,\dots ,k-1%
\text{.}
\end{equation*}%
Then by using the inductive relationship of $\pi _{k}\left( \Gamma
^{s,t}\right) $ as at $\left( \ref{inner induction of Gamma on level k}%
\right) $, we have%
\begin{equation*}
\left\Vert \pi _{k}\left( \Gamma ^{s,t}\right) -\pi _{k}\left( \Gamma
^{s,u}\right) -\pi _{k}\left( \Gamma ^{u,t}\right) \right\Vert \leq
C_{p}M_{k-1}\omega \left( s,t\right) ^{\frac{\left[ p\right] +1}{p}}\leq
C_{p}\omega \left( s,t\right) ^{\frac{\left[ p\right] +1}{p}}\text{.}
\end{equation*}%
Since $\left[ s,t\right] $ is dyadic, by recursively bisecting $\left[ s,t%
\right] $ and using $\Gamma ^{t_{j}^{n},t_{j+1}^{n}}=0$ as at $\left( \ref%
{inner Gamma vanishes on small dyadic interval}\right) $, we have that, for
any dyadic $\left[ s,t\right] \subseteq \left[ s_{0},t_{0}\right] $,%
\begin{equation}
\left\Vert \pi _{k}\left( \Gamma ^{s,t}\right) \right\Vert \leq C_{p}\left(
\sum_{n=0}^{\infty }2^{\left( 1-\frac{\left[ p\right] +1}{p}\right)
n}\right) \omega \left( s,t\right) ^{\frac{\left[ p\right] +1}{p}}\leq
C_{p}\omega \left( s,t\right) ^{\frac{\left[ p\right] +1}{p}}\text{.}
\label{inner Gamma}
\end{equation}%
Moreover, based on $\left( \ref{inner estimate of ode part}\right) $ and $%
\left( \ref{inner Gamma}\right) $, for any dyadic interval $\left[ s,t\right]
\subseteq \left[ s_{0},t_{0}\right] $, \ we have 
\begin{eqnarray}
\left\Vert \pi _{k}\left( y_{s_{0}t}^{n}-y_{s_{0},s}^{n}\right) \right\Vert
&\leq &\left\Vert \pi _{k}\left( y_{1}^{s,t}-y_{s_{0},s}^{n}\right)
\right\Vert +\left\Vert \pi _{k}\left( \Gamma ^{s,t}\right) \right\Vert
\label{inner estimate of higher levels} \\
&\leq &\sum_{j=0}^{k-1}\left\Vert \pi _{j}\left( y_{s_{0},s}^{n}\right)
\right\Vert \left\Vert \pi _{k-j}\left( \left( y_{s_{0},s}^{n}\right)
^{-1}\otimes y_{1}^{s,t}-1\right) \right\Vert +C_{p}\omega \left( s,t\right)
^{\frac{\left[ p\right] +1}{p}}  \notag \\
&\leq &C_{p}M_{k-1}\omega \left( s,t\right) ^{\frac{1}{p}}+C_{p}\omega
\left( s,t\right) ^{\frac{\left[ p\right] +1}{p}}\leq C_{p}\omega \left(
s,t\right) ^{\frac{1}{p}}\text{.}  \notag
\end{eqnarray}%
Then similar estimate holds for non-dyadic intervals based on Lemma \ref%
{Lemma dyadic to non-dyadic}, and we have, for any $\left\{ s,t\right\} \in
\Lambda \left( n\right) $, $\left[ s,t\right] \subseteq \left[ s_{0},t_{0}%
\right] $,%
\begin{equation*}
\left\Vert \pi _{k}\left( y_{s_{0}t}^{n}-y_{s_{0},s}^{n}\right) \right\Vert
\leq C_{p}\omega \left( s,t\right) ^{\frac{1}{p}}\text{.}
\end{equation*}%
As a result, 
\begin{equation}
M_{k}\leq M_{k-1}+\sup_{\omega \left( s_{0},s\right) \leq 1,s\in \Lambda
\left( n\right) }\sup_{n\geq 1}\left\Vert \pi _{k}\left(
y_{s_{0},s}^{n}\right) \right\Vert \leq C_{p}\text{.}
\label{inner y(n) are uniformly bounded}
\end{equation}

Based on $\left( \ref{inner Gamma}\right) $, for any dyadic $\left[ s,t%
\right] \subseteq \left[ s_{0},t_{0}\right] $, $\left\Vert \Gamma
^{s,t}\right\Vert \leq C_{p}\omega \left( s,t\right) ^{\frac{\left[ p\right]
+1}{p}}$. Similar estimate holds for non-dyadic intervals based on Lemma \ref%
{Lemma from dyadic difference to non-dyadic differene} on p\pageref{Lemma
from dyadic difference to non-dyadic differene}, and we have, for any $%
\left\{ s,t\right\} \in \Lambda \left( n\right) $, $\left[ s,t\right]
\subseteq \left[ s_{0},t_{0}\right] $, 
\begin{equation}
\left\Vert \Gamma ^{s,t}\right\Vert \leq C_{p}\omega \left( s,t\right) ^{%
\frac{\left[ p\right] +1}{p}}\text{.}
\label{inner estimate of Gamma on general small intervals}
\end{equation}

In particular, when $\left[ s,t\right] =\left[ s_{0},t_{0}\right] $, $%
y^{s,t}:\left[ 0,1\right] \rightarrow G^{\left[ p\right] }\left( \mathcal{V}%
\right) $ (defined at $\left( \ref{inner definition of y(s,t)}\right) $) is
the solution of the ODE%
\begin{equation*}
dy_{r}^{s,t}=\sum_{k=1}^{\left[ p\right] }F\left( f\left( \cdot +\pi
_{1}\left( y_{s}^{n}\right) \right) \right) ^{\circ k}\pi _{k}\left( \log
X_{s,t}\right) \left( Id_{L^{\left[ p\right] }\left( \mathcal{U}\right)
}\right) \left( y_{r}^{s,t}\right) dr\text{, }r\in \left[ 0,1\right] \text{, 
}y_{0}^{s,t}=1\text{.}
\end{equation*}%
Then based on $\left( \ref{bound on y on small interval}\right) $ in Lemma %
\ref{Lemma simple property of solution of ODE} on p\pageref{bound on y on
small interval}, we have, for $k=1,2,\dots ,\left[ p\right] $,%
\begin{equation}
\left\Vert \pi _{k}\left( y_{1}^{s,t}\right) \right\Vert \leq C_{p}\omega
\left( s,t\right) ^{\frac{k}{p}}\text{.}  \label{inner estimate of y(s,t)}
\end{equation}%
Then, based on $\left( \ref{inner estimate of Gamma on general small
intervals}\right) $ and $\left( \ref{inner estimate of y(s,t)}\right) $,
there exists constant $C_{p}$, such that, for any $n\geq 1$, any $\left\{
s,t\right\} \subseteq \Lambda \left( n\right) $, $\omega \left( s,t\right)
\leq 1$, and $k=1,2,\dots ,\left[ p\right] $, 
\begin{equation*}
\left\Vert \pi _{k}\left( y_{s,t}^{n}\right) \right\Vert \leq \left\Vert \pi
_{k}\left( \Gamma ^{s,t}\right) \right\Vert +\left\Vert \pi _{k}\left(
y_{1}^{s,t}\right) \right\Vert \leq C_{p}\omega \left( s,t\right) ^{\frac{k}{%
p}}\text{.}
\end{equation*}
\end{proof}

\begin{lemma}
\label{Lemma continuity in initial value of dyadic solutions}For $\left[ p%
\right] +1\geq \gamma >p\geq 1$, suppose $X\in C^{p-var}\left( \left[ 0,T%
\right] ,G^{\left[ p\right] }\left( \mathcal{V}\right) \right) $, $f\in
L\left( \mathcal{V},C^{\gamma }\left( \mathcal{U},\mathcal{U}\right) \right) 
$ and $\xi ^{i}\in G^{\left[ p\right] }\left( \mathcal{U}\right) $, $i=1,2$.
Define control $\omega $ by%
\begin{equation*}
\omega \left( s,t\right) :=\left\vert f\right\vert _{\gamma }^{p}\left\Vert
X\right\Vert _{p-var,\left[ s,t\right] }^{p}\text{, }\forall 0\leq s\leq
t\leq T\text{.}
\end{equation*}%
Based on $\omega $, define dyadic partition $\Lambda \left( n\right)
=\left\{ t_{j}^{n}\right\} $ as in Notation \ref{Notation dyadic partition
of a control}. For $i=1,2$, let $y^{i}:\left[ 0,T\right] \rightarrow G^{%
\left[ p\right] }\left( \mathcal{U}\right) $ be the solution of the ODE
(with different initial value)%
\begin{eqnarray}
y_{0}^{i} &=&\xi ^{i},  \label{definition of yi} \\
dy_{u}^{i} &=&\sum_{k=1}^{\left[ p\right] }F\left( f\right) ^{\circ k}\pi
_{k}\left( \log X_{t_{j}^{n},t_{j+1}^{n}}\right) \left( Id\right) \left(
y_{u}^{i}\right) \frac{du}{t_{j+1}^{n}-t_{j}^{n}},u\in \left[
t_{j}^{n},t_{j+1}^{n}\right] ,j=0,1,\dots ,2^{n}-1\text{.}  \notag
\end{eqnarray}%
Then, there exist $C_{p,\gamma }>0$ and $\delta _{p,\gamma }\in (0,1]$, such
that, for any $\left\{ s,t\right\} \in \Lambda \left( n\right) $ satisfying $%
\omega \left( s,t\right) \leq \delta _{p,\gamma }$,%
\begin{equation*}
\left\Vert y_{t}^{1}-y_{t}^{2}\right\Vert \leq C_{p,\gamma }\left\Vert
y_{s}^{1}\right\Vert \left\Vert y_{s}^{1}-y_{s}^{2}\right\Vert \text{.}
\end{equation*}
\end{lemma}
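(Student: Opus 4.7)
The plan is to follow the strategy of Lemma \ref{Lemma dyadic paths are uniformly bounded}: after normalizing $|f|_\gamma=1$ (by rescaling $f$ and $X$, which leaves $y^i$ unchanged), establish a one-step estimate on each level-$n$ dyadic interval and iterate along the strictly monotone dyadic decomposition of $[s,t]$ provided by Lemma \ref{Lemma decompose an interval as union of dyadic intervals}.

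On a single dyadic interval $[t_j^n,t_{j+1}^n]$, factor $y^i_{t_{j+1}^n}=y^i_{t_j^n}\otimes z^i_1$, where $z^i:[0,1]\to L^{[p]}(\mathcal{U})$ solves the ODE with shifted vector field $f(\cdot+\pi_1(y^i_{t_j^n}))$ and driving signature $\log X_{t_j^n,t_{j+1}^n}$, and decompose
\[y^1_{t_{j+1}^n}-y^2_{t_{j+1}^n} = (y^1_{t_j^n}-y^2_{t_j^n})\otimes z^2_1 + y^1_{t_j^n}\otimes(z^1_1-z^2_1).\]
Using $\|z^2_1\|\leq 1+C_p\omega_j^{1/p}$ from Lemma \ref{Lemma simple property of solution of ODE} and $\|z^1_1-z^2_1\|\leq C_{p,\gamma}\omega_j^{1/p}\|y^1_{t_j^n}-y^2_{t_j^n}\|$ from Lemma \ref{Lemma two ODE with different vector fields} (the two $z$-ODEs share the driving signal and differ only by a translation of the vector field of magnitude $\|\pi_1(y^1_{t_j^n}-y^2_{t_j^n})\|$, and $|f(\cdot+a)-f(\cdot+b)|_{[p]-1}\leq C_\gamma\|a-b\|$) yields the one-step bound $\|y^1_{t_{j+1}^n}-y^2_{t_{j+1}^n}\|\leq (1+C_{p,\gamma}\omega_j^{1/p}(1+\|y^1_{t_j^n}\|))\|y^1_{t_j^n}-y^2_{t_j^n}\|$. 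To pass from a single dyadic step to an arbitrary $\{s,t\}\in\Lambda(n)$, I then invoke Lemma \ref{Lemma decompose an interval as union of dyadic intervals} to write $[s,t]$ as a union of dyadic sub-intervals whose levels strictly increase away from a center, so that $\sum_j\omega_j^{1/p}\leq C_p\omega(s,t)^{1/p}$ by geometric summation, and I control $\|y^1_{u_j}\|\leq C_p\|y^1_s\|$ uniformly along this decomposition via Lemma \ref{Lemma dyadic paths are uniformly bounded}.

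The hard part is that a direct Gronwall iteration of the one-step bound produces an amplification factor of order $\exp(C_{p,\gamma}\|y^1_s\|\omega(s,t)^{1/p})$, which cannot be collapsed to a multiplicative $C_{p,\gamma}\|y^1_s\|$ uniformly in $\|y^1_s\|$ for any universal $\delta_{p,\gamma}$. To circumvent this, I plan to perform a second induction on the tensor level $k=1,\ldots,[p]$, mirroring the level-by-level bookkeeping in the proof of Lemma \ref{Lemma dyadic paths are uniformly bounded}. At level $1$ only $\pi_1$-components interact and direct iteration gives $\|\pi_1(y^1_t-y^2_t)\|\leq C_p\|\pi_1(y^1_s-y^2_s)\|$ with no $\|y^1_s\|$ factor. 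At higher levels $k$, the tensor cross-terms $\pi_{k-j}(y^1_s)\otimes\pi_j(z^1_1-z^2_1)$ contribute factors $\|\pi_{k-j}(y^1_s)\|$ which are already controlled by the inductive hypothesis and which, summed over $k$, produce $\|y^1_s\|$ exactly once rather than exponentiated. Choosing $\delta_{p,\gamma}$ small enough for the resulting level-wise geometric sums to close then yields the claimed bound linear in $\|y^1_s\|$.
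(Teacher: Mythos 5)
Your proposal correctly identifies the hard point (a naive iteration amplifies exponentially), but the remedy you propose does not close the gap, and the gap is more basic than the tensor-level bookkeeping you suggest. The one-step estimate $\left\Vert y_{t_{j+1}^{n}}^{1}-y_{t_{j+1}^{n}}^{2}\right\Vert \leq \left( 1+C\,\omega _{j}^{1/p}(1+\Vert y_{t_{j}^{n}}^{1}\Vert )\right) \left\Vert y_{t_{j}^{n}}^{1}-y_{t_{j}^{n}}^{2}\right\Vert $ is only available on level-$n$ dyadic intervals, because only there does the increment of $y^{i}$ coincide with a one-step ODE solution to which Lemma \ref{Lemma two ODE with different vector fields} applies. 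Iterating it over the level-$n$ partition of a fixed macroscopic interval gives a factor $\prod_{j}(1+C\omega _{j}^{1/p})\leq \exp (C\sum_{j}\omega _{j}^{1/p})$, and since the per-step exponent is $1/p\leq 1$, $\sum_{j}\omega _{j}^{1/p}$ is of order $2^{n(1-1/p)}\omega (s,t)^{1/p}$ and blows up as $n\rightarrow \infty $; the constant must be uniform in $n$ (the lemma is used with $m,n\rightarrow \infty $ in the existence proof), so this route fails already at tensor level $1$ -- your claim that ``direct iteration gives $\Vert \pi _{1}(y_{t}^{1}-y_{t}^{2})\Vert \leq C_{p}\Vert \pi _{1}(y_{s}^{1}-y_{s}^{2})\Vert $'' is exactly where the argument breaks, so the subsequent induction on $k$ has nothing to stand on. Conversely, the monotone-level decomposition of $[s,t]$ from Lemma \ref{Lemma decompose an interval as union of dyadic intervals}, on which your geometric summation $\sum_{j}\omega _{j}^{1/p}\leq C_{p}\omega (s,t)^{1/p}$ is valid, consists of coarse dyadic intervals on which the increments of $y^{i}$ are \emph{not} one-step ODE solutions, so the one-step difference estimate cannot be invoked there without a further argument.

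What is missing is precisely the mechanism the paper uses: introduce $\Gamma ^{s,t}:=\left( y_{s_{0},t}^{1}-y_{s_{0},t}^{2}\right) -\left( y_{1}^{1,s,t}-y_{1}^{2,s,t}\right) $, note that it vanishes on level-$n$ intervals, and prove an almost-additivity estimate $\left\Vert \Gamma ^{s,u}+\Gamma ^{u,t}-\Gamma ^{s,t}\right\Vert \leq C_{p}\left( \omega (s,t)^{1/p}\Vert \Gamma ^{s,u}\Vert +\omega (s,t)^{\gamma /p}\sup_{r}\Vert y_{r}^{1}-y_{r}^{2}\Vert \right) $ via Lemma \ref{Lemma B}, Lemma \ref{Lemma simple but important} and Gronwall. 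The crucial feature is the defect exponent $\gamma /p>1$, which is superadditive and therefore survives the recursive bisection down to level $n$ (the linear $\omega ^{1/p}$ term only enters through the convergent product $\prod_{j}(1+2^{-j/p}\omega ^{1/p})$ along the bisection tree, not over all $2^{n}$ small intervals); this yields $\Vert \Gamma ^{s,t}\Vert \leq C_{p,\gamma }\omega (s,t)^{\gamma /p}\sup_{r}\Vert y_{r}^{1}-y_{r}^{2}\Vert $, extended to non-dyadic $\{s,t\}\subseteq \Lambda (n)$ by Lemma \ref{Lemma from dyadic difference to non-dyadic differene}. Finally, $\delta _{p,\gamma }$ arises from an absorption (bootstrap) step: combining with Lemma \ref{Lemma two ODE with different vector fields} on $[s_{0},t_{0}]$ gives a self-bounding inequality for $\sup_{u}\Vert y_{s,u}^{1}-y_{s,u}^{2}\Vert $ whose coefficient $C_{p,\gamma }\omega ^{\gamma /p}$ is made $\leq \frac{1}{2}$ by choosing $\omega (s,t)\leq \delta _{p,\gamma }$, after which the factor $\Vert y_{s}^{1}\Vert $ appears only once when converting increments back to $y_{t}^{i}=y_{s}^{i}\otimes y_{s,t}^{i}$. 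Your proposal contains neither the exponent-$\gamma /p$ remainder estimate nor the absorption step, and without them the ``level-wise geometric sums'' cannot be made to close.
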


\begin{proof}
We assume $\left\vert f\right\vert _{\gamma }=1$. Otherwise, we replace $f$
and $X$ by $\left\vert f\right\vert _{\gamma }^{-1}f$ and $\delta
_{\left\vert f\right\vert _{\gamma }}X$ respectively. Fix $s_{0},t_{0}\in
\Lambda \left( n\right) $ satisfying $\omega \left( s_{0},t_{0}\right) \leq
1 $. Then based on Lemma \ref{Lemma dyadic paths are uniformly bounded},
there exists constant $C_{p}$,%
\begin{equation}
\max_{i=1,2}\sup_{s\in \wedge \left( n\right) \cap \left[ s_{0},t_{0}\right]
}\left\Vert y_{s_{0},s}^{i}\right\Vert \leq C_{p}<\infty \text{.}
\label{inner uniform bound on yni}
\end{equation}%
For $i=1,2$, $\left\{ s,u,t\right\} \subset \Lambda \left( n\right) $
satisfying $s_{0}\leq s\leq u\leq t\leq t_{0}$, let $y^{i,s,t}:\left[ 0,1%
\right] \rightarrow G^{\left[ p\right] }\left( \mathcal{U}\right) $ and $%
y^{i,s,u,t}:\left[ 0,2\right] \rightarrow G^{\left[ p\right] }\left( 
\mathcal{U}\right) $ be the solution to the ODE%
\begin{eqnarray*}
dy_{r}^{i,s,t} &=&\tsum\nolimits_{k=1}^{\left[ p\right] }F\left( f^{i}\left(
\cdot +\pi _{1}\left( y_{s_{0}}^{i}\right) \right) \right) ^{\circ k}\pi
_{k}\left( \log X_{s,t}\right) \left( Id_{L^{\left[ p\right] }\left( 
\mathcal{U}\right) }\right) \left( y_{r}^{i,s,t}\right) dr\text{, }r\in %
\left[ 0,1\right] \text{, }y_{0}^{i,s,t}=y_{s_{0},s}^{i}\text{,} \\
dy_{r}^{i,s,u,t} &=&\left\{ 
\begin{array}{cc}
\sum_{k=1}^{\left[ p\right] }F\left( f^{i}\left( \cdot +\pi _{1}\left(
y_{s_{0}}^{i}\right) \right) \right) ^{\circ k}\pi _{k}\left( \log
X_{s,u}\right) \left( Id_{L^{\left[ p\right] }\left( \mathcal{U}\right)
}\right) \left( y_{r}^{i,s,u,t}\right) dr\text{,} & r\in \left[ 0,1\right]
\\ 
\sum_{k=1}^{\left[ p\right] }F\left( f^{i}\left( \cdot +\pi _{1}\left(
y_{s_{0}}^{i}\right) \right) \right) ^{\circ k}\pi _{k}\left( \log
X_{s,u}\right) \left( Id_{L^{\left[ p\right] }\left( \mathcal{U}\right)
}\right) \left( y_{r}^{i,s,u,t}\right) dr\text{,} & r\in \left[ 1,2\right]%
\end{array}%
\right. \text{, }y_{0}^{i,s,u,t}=y_{s_{0},s}^{i}\text{.}
\end{eqnarray*}%
For $\left\{ s,t\right\} \subseteq \Lambda \left( n\right) $, $s_{0}\leq
s\leq t\leq t_{0}$, denote%
\begin{equation*}
y_{t}:=y_{s_{0},t}^{1}-y_{s_{0},t}^{2}\text{, }%
y_{1}^{s,t}:=y_{1}^{1,s,t}-y_{1}^{2,s,t}\text{ and }\Gamma
^{s,t}:=y_{t}-y_{1}^{s,t}\text{.}
\end{equation*}%
Then it can be checked that, 
\begin{equation}
\Gamma ^{t_{j}^{n},t_{j+1}^{n}}=0\text{, }j=0,1,\dots ,2^{n}-1\text{.}
\label{inner Gamma vanishes on dyadic interval in Lemma of continuity in initial value}
\end{equation}%
For $u\in \Lambda \left( n\right) $, $u\in \left( s,t\right) $, denote 
\begin{equation*}
y^{s,u,t}:=y^{1,s,u,t}-y^{2,s,u,t}\text{.}
\end{equation*}%
Then, it can be computed that,%
\begin{eqnarray}
\left\Vert \Gamma ^{s,u}+\Gamma ^{u,t}-\Gamma ^{s,t}\right\Vert
&=&\left\Vert y_{1}^{s,u}-y_{s}+y_{1}^{u,t}-y_{u}-\left(
y_{1}^{s,t}-y_{s}\right) \right\Vert
\label{inner bisecting Gamma st in Lemma of continuity in initial value} \\
&\leq &\left\Vert y_{1}^{u,t}-y_{u}-\left( y_{2}^{s,u,t}-y_{1}^{s,u}\right)
\right\Vert +\left\Vert y_{2}^{s,u,t}-y_{1}^{s,t}\right\Vert  \notag
\end{eqnarray}

Denote $\delta :=\omega \left( s,t\right) ^{\frac{1}{p}}$, then $\delta \leq
\omega \left( s_{0},t_{0}\right) ^{\frac{1}{p}}\leq 1$. Based on Lemma \ref%
{Lemma B} (on p\pageref{Lemma B}), we have%
\begin{equation}
\left\Vert y_{2}^{s,u,t}-y_{1}^{s,t}\right\Vert \leq C_{p}\delta ^{\gamma
}\left\Vert y_{s}\right\Vert \text{.}
\label{inner lemma a in Lemma of continuity in initial value}
\end{equation}%
On the other hand, we want to prove%
\begin{equation*}
\left\Vert y_{1}^{u,t}-y_{u}-\left( y_{2}^{s,u,t}-y_{1}^{s,u}\right)
\right\Vert \leq C_{p}\delta \left\Vert \Gamma _{s,u}\right\Vert
+C_{p}\delta ^{\gamma }\left\Vert y_{u}\right\Vert \text{.}
\end{equation*}%
Based on $\left( \ref{inner uniform bound on yni}\right) $, we have $%
\max_{i=1,2}\left\Vert y_{s_{0},u}^{i}\right\Vert \vee \left\Vert
y_{1}^{i,s,u}\right\Vert \leq C_{p}$. Then by using Lemma \ref{Lemma simple
but important} (on p\pageref{Lemma simple but important}) and estimating the
two cases $k=1,\dots ,\left[ p\right] -1$ and $k=\left[ p\right] $
separately, we get%
\begin{eqnarray}
&&\left\Vert y_{r}^{u,t}-y_{u}-\left( y_{r+1}^{s,u,t}-y_{1}^{s,u}\right)
\right\Vert  \label{inner estimate 1 in Lemma of continuity in initial value}
\\
&\leq &\sum\nolimits_{k=1}^{\left[ p\right] }\int_{0}^{r}\left\Vert F\left(
f\right) ^{\circ k}\pi _{k}\left( \log X_{u,t}\right) \left( Id_{L^{\left[ p%
\right] }\left( \mathcal{U}\right) }\right) \left( y_{v}^{1,u,t}\right)
-F\left( f\right) ^{\circ k}\pi _{k}\left( \log X_{u,t}\right) \left( Id_{L^{%
\left[ p\right] }\left( \mathcal{U}\right) }\right) \left(
y_{v+1}^{1,s,u,t}\right) \right.  \notag \\
&&\left. -F\left( f\right) ^{\circ k}\pi _{k}\left( \log X_{u,t}\right)
\left( Id_{L^{\left[ p\right] }\left( \mathcal{U}\right) }\right) \left(
y_{v}^{2,u,t}\right) +F\left( f\right) ^{\circ k}\pi _{k}\left( \log
X_{u,t}\right) \left( Id_{L^{\left[ p\right] }\left( \mathcal{U}\right)
}\right) \left( y_{v+1}^{2,s,u,t}\right) \right\Vert dv  \notag \\
&\leq &C_{p}\delta \int_{0}^{r}\left\Vert
y_{v}^{u,t}-y_{v+1}^{s,u,t}\right\Vert dv  \notag \\
&&+C_{p}\sup_{r\in \left[ 0,1\right] }\left( \left\Vert
y_{r}^{1,u,t}-y_{r+1}^{1,s,u,t}\right\Vert +\left\Vert
y_{r}^{2,u,t}-y_{r+1}^{2,s,u,t}\right\Vert \right) \left( \delta \sup_{r\in %
\left[ 0,1\right] }\left\Vert y_{r}^{1,u,t}-y_{r}^{2,u,t}\right\Vert \right)
\notag \\
&&+C_{p}\sup_{r\in \left[ 0,1\right] }\left( \left\Vert
y_{r}^{1,u,t}-y_{r+1}^{1,s,u,t}\right\Vert +\left\Vert
y_{r}^{2,u,t}-y_{r+1}^{2,s,u,t}\right\Vert \right) ^{\left\{ \gamma \right\}
}\left( \delta ^{\left[ p\right] }\sup_{r\in \left[ 0,1\right] }\left\Vert
y_{r}^{1,u,t}-y_{r}^{2,u,t}\right\Vert \right) \text{.}  \notag
\end{eqnarray}%
By using $\left\Vert y_{s_{0},s}^{i}\right\Vert \leq C_{p}$ as at $\left( %
\ref{inner uniform bound on yni}\right) $, and based on Lemma \ref{Lemma two
ODE with different vector fields} on p\pageref{Lemma two ODE with different
vector fields} (continuity in initial value) and $\left( \ref{estimate
uniform bound on difference on small interval}\right) $ in Lemma \ref{Lemma
dyadic paths are uniformly bounded} on p\pageref{estimate uniform bound on
difference on small interval}, we have, 
\begin{equation}
\sup_{r\in \left[ 0,1\right] }\left\Vert
y_{r}^{i,u,t}-y_{r+1}^{i,s,u,t}\right\Vert \leq C_{p}\left\Vert
y_{s_{0},u}^{i}-y_{1}^{i,s,u}\right\Vert \leq C_{p}\omega \left( s,u\right)
^{\frac{\left[ p\right] +1}{p}}\leq C_{p}\delta ^{\left[ p\right] +1}\text{.}
\label{inner estimate 2 in Lemma of continuity in initial value}
\end{equation}%
By using $\left\Vert y_{s_{0},s}^{i}\right\Vert \leq C_{p}$, and based on
Lemma \ref{Lemma two ODE with different vector fields} (continuity in
initial value),%
\begin{equation}
\sup_{r\in \left[ 0,1\right] }\left\Vert
y_{r}^{1,u,t}-y_{r}^{2,u,t}\right\Vert \leq C_{p}\left\Vert
y_{s_{0},u}^{1}-y_{s_{0},u}^{2}\right\Vert =C_{p}\left\Vert y_{u}\right\Vert 
\text{.}  \label{inner estimate 3 in Lemma of continuity in initial value}
\end{equation}%
As a result, combining $\left( \ref{inner estimate 1 in Lemma of continuity
in initial value}\right) $, $\left( \ref{inner estimate 2 in Lemma of
continuity in initial value}\right) $ and $\left( \ref{inner estimate 3 in
Lemma of continuity in initial value}\right) $, we have ($\delta \leq 1$)%
\begin{equation*}
\left\Vert y_{r}^{u,t}-y_{u}-\left( y_{r+1}^{s,u,t}-y_{1}^{s,u}\right)
\right\Vert \leq C_{p}\delta \int_{0}^{r}\left\Vert
y_{v}^{u,t}-y_{v+1}^{s,u,t}\right\Vert dv+C_{p}\delta ^{\gamma }\left\Vert
y_{u}\right\Vert \text{.}
\end{equation*}%
Then by using Gronwall's inequality ($\delta \leq 1$), we have%
\begin{equation*}
\sup_{r\in \left[ 0,1\right] }\left\Vert
y_{r}^{u,t}-y_{r+1}^{s,u,t}\right\Vert \leq C_{p}\left\Vert
y_{u}-y_{1}^{s,u}\right\Vert +C_{p}\delta ^{\gamma }\left\Vert
y_{u}\right\Vert =C_{p}\left\Vert \Gamma _{s,u}\right\Vert +C_{p}\delta
^{\gamma }\left\Vert y_{u}\right\Vert \text{,}
\end{equation*}%
and%
\begin{equation}
\sup_{r\in \left[ 0,1\right] }\left\Vert y_{r}^{u,t}-y_{u}-\left(
y_{r+1}^{s,u,t}-y_{1}^{s,u}\right) \right\Vert \leq C_{p}\delta \left\Vert
\Gamma _{s,u}\right\Vert +C_{p}\delta ^{\gamma }\left\Vert y_{u}\right\Vert 
\text{.}  \label{inner lemma b in Lemma of continuity in initial value}
\end{equation}%
As a result, combining $\left( \ref{inner bisecting Gamma st in Lemma of
continuity in initial value}\right) $, $\left( \ref{inner lemma a in Lemma
of continuity in initial value}\right) $ and $\left( \ref{inner lemma b in
Lemma of continuity in initial value}\right) $, we have, for $\left\{
s,u,t\right\} \in \Lambda \left( n\right) $, $\left[ s,t\right] \subseteq %
\left[ s_{0},t_{0}\right] $, $\omega \left( s_{0},t_{0}\right) \leq 1$,%
\begin{equation*}
\left\Vert \Gamma ^{s,u}+\Gamma ^{u,t}-\Gamma ^{s,t}\right\Vert \leq
C_{p}\left( \omega \left( s,t\right) ^{\frac{1}{p}}\left\Vert \Gamma
^{s,u}\right\Vert +\omega \left( s,t\right) ^{\frac{\gamma }{p}}\left(
\sup_{r\in \left[ s,t\right] }\left\Vert y_{r}\right\Vert \right) \right) 
\text{.}
\end{equation*}

If $\left[ s,t\right] \subseteq \left[ s_{0},t_{0}\right] $ is a dyadic
interval, then after a sequence of bisections, we have ($\Gamma
^{t_{j}^{n},t_{j+1}^{n}}=0$ as at $\left( \ref{inner Gamma vanishes on
dyadic interval in Lemma of continuity in initial value}\right) $)%
\begin{eqnarray*}
\left\Vert \Gamma ^{s,t}\right\Vert &\leq &\left( \sum_{k=0}^{n}\left(
\prod\nolimits_{j=0}^{k}\left( 1+2^{-\frac{j}{p}}\omega \left( s,t\right) ^{%
\frac{1}{p}}\right) \left( \frac{1}{2}\right) ^{\left( \frac{\gamma }{p}%
-1\right) k}\right) \right) \omega \left( s,t\right) ^{\frac{\gamma }{p}%
}\left( \sup_{r\in \left[ s,t\right] }\left\Vert y_{r}\right\Vert \right) \\
&&+\prod\nolimits_{j=0}^{n}\left( 1+2^{-\frac{j}{p}}\omega \left( s,t\right)
^{\frac{1}{p}}\right) \left( \sum_{j=0}^{2^{n}-1}\left\Vert \Gamma
^{t_{j}^{n},t_{j+1}^{n}}\right\Vert \right) \\
&\leq &\exp \left( \frac{2^{\frac{1}{p}}}{2^{\frac{1}{p}}-1}\right) \frac{2^{%
\frac{\gamma }{p}-1}}{2^{\frac{\gamma }{p}-1}-1}\omega \left( s,t\right) ^{%
\frac{\gamma }{p}}\left( \sup_{r\in \left[ s,t\right] }\left\Vert
y_{r}\right\Vert \right) \text{.}
\end{eqnarray*}%
As a result, when $\left[ s,t\right] \subseteq \left[ s_{0},t_{0}\right] $
is a dyadic interval, we have%
\begin{equation*}
\left\Vert y_{t}-y_{1}^{s,t}\right\Vert \leq C_{p,\gamma }\omega \left(
s,t\right) ^{\frac{\gamma }{p}}\sup_{r\in \left[ s,t\right] }\left\Vert
y_{r}\right\Vert \text{.}
\end{equation*}

Based on Lemma \ref{Lemma from dyadic difference to non-dyadic differene} on
p\pageref{Lemma from dyadic difference to non-dyadic differene}, we can pass
similar estimate to non-dyadic intervals and get, for any $\left\{
s,t\right\} \in \Lambda \left( n\right) $, $\left[ s,t\right] \subseteq %
\left[ s_{0},t_{0}\right] $, 
\begin{equation}
\left\Vert y_{t}-y_{1}^{s,t}\right\Vert \leq C_{p,\gamma }\omega \left(
s,t\right) ^{\frac{\gamma }{p}}\sup_{r\in \left[ s,t\right] }\left\Vert
y_{r}\right\Vert \text{.}  \label{inner estimate Gamma on dyadic interval}
\end{equation}

Let $\left[ s,t\right] =\left[ s_{0},t_{0}\right] $, then $y^{i,s,t}$ is the
solution to the ODE:%
\begin{equation*}
dy_{r}^{i,s,t}=\sum_{k=1}^{\left[ p\right] }F\left( f\left( \cdot +\pi
_{1}\left( y_{s}^{i}\right) \right) \right) ^{\circ k}\pi _{k}\left( \log
X_{s,t}\right) \left( Id_{L^{\left[ p\right] }\left( \mathcal{U}\right)
}\right) \left( y_{r}^{i,s,t}\right) dr\text{, }r\in \left[ 0,1\right] \text{%
, }y_{0}^{i,s,t}=1\text{.}
\end{equation*}%
Based on Lemma \ref{Lemma two ODE with different vector fields} (on p\pageref%
{Lemma two ODE with different vector fields}), for $k=1,2,\dots ,\left[ p%
\right] $,%
\begin{equation}
\left\Vert \pi _{k}\left( y_{1}^{s,t}\right) \right\Vert =\left\Vert \pi
_{k}\left( y_{1}^{1,s,t}\right) -\pi _{k}\left( y_{1}^{2,s,t}\right)
\right\Vert \leq C_{p}\omega \left( s,t\right) ^{\frac{k}{p}}\left\Vert \pi
_{1}\left( y_{s}^{1}\right) -\pi _{1}\left( y_{s}^{2}\right) \right\Vert 
\text{.}  \label{inner estimate of ys,t on dyadic interval}
\end{equation}%
Combining $\left( \ref{inner estimate Gamma on dyadic interval}\right) $ and 
$\left( \ref{inner estimate of ys,t on dyadic interval}\right) $, we get,
for $k=1,2,\dots ,\left[ p\right] $,%
\begin{eqnarray}
\left\Vert \pi _{k}\left( y_{s,t}^{1}\right) -\pi _{k}\left(
y_{s,t}^{2}\right) \right\Vert &=&\left\Vert \pi _{k}\left( y_{t}\right)
\right\Vert
\label{inner estimate on difference of solutions with different initial value}
\\
&\leq &C_{p}\omega \left( s,t\right) ^{\frac{k}{p}}\left\Vert \pi _{1}\left(
y_{s}^{1}\right) -\pi _{1}\left( y_{s}^{2}\right) \right\Vert +C_{p,\gamma
}\omega \left( s,t\right) ^{\frac{\gamma }{p}}\left( \sup_{u\in \left[ s,t%
\right] }\left\Vert y_{s,u}^{1}-y_{s,u}^{2}\right\Vert \right) \text{.} 
\notag
\end{eqnarray}%
In particular, we have 
\begin{equation*}
\sup_{u\in \left[ s,t\right] }\left\Vert y_{s,u}^{1}-y_{s,u}^{2}\right\Vert
\leq C_{p}\omega \left( s,t\right) ^{\frac{1}{p}}\left\Vert \pi _{1}\left(
y_{s}^{1}\right) -\pi _{1}\left( y_{s}^{2}\right) \right\Vert +C_{p,\gamma
}\omega \left( s,t\right) ^{\frac{\gamma }{p}}\left( \sup_{u\in \left[ s,t%
\right] }\left\Vert y_{s,u}^{1}-y_{s,u}^{2}\right\Vert \right) \text{.}
\end{equation*}%
Choose $\delta _{p,\gamma }\in (0,1]$ satisfying $C_{p,\gamma }\delta
_{p,\gamma }^{\frac{\gamma }{p}}\leq 2^{-1}$. Then for $\left[ s,t\right] $
satisfying $\omega \left( s,t\right) \leq \delta _{p,\gamma }$, we have 
\begin{equation*}
\sup_{u\in \left[ s,t\right] }\left\Vert y_{s,u}^{1}-y_{s,u}^{2}\right\Vert
\leq C_{p}\omega \left( s,t\right) ^{\frac{1}{p}}\left\Vert \pi _{1}\left(
y_{s}^{1}\right) -\pi _{1}\left( y_{s}^{2}\right) \right\Vert \text{.}
\end{equation*}%
Combined with $\left( \ref{inner estimate on difference of solutions with
different initial value}\right) $,%
\begin{equation}
\left\Vert \pi _{k}\left( y_{s,t}^{1}\right) -\pi _{k}\left(
y_{s,t}^{2}\right) \right\Vert \leq C_{p,\gamma }\omega \left( s,t\right) ^{%
\frac{k}{p}}\left\Vert \pi _{1}\left( y_{s}^{1}\right) -\pi _{1}\left(
y_{s}^{2}\right) \right\Vert \text{.}
\label{inner estimate  interval is less than 1/2}
\end{equation}

As a result, by combining $\left( \ref{inner estimate interval is less than
1/2}\right) $ with $\left\Vert \pi _{j}\left( y_{s,t}^{i}\right) \right\Vert
\leq C_{p}\omega \left( s,t\right) ^{\frac{j}{p}}\leq C_{p}$ as at $\left( %
\ref{estimate uniform bound on dyadic solution on small interval}\right) $
(on p\pageref{estimate uniform bound on dyadic solution on small interval}),
there exists $\delta _{p,\gamma }\in (0,1]$ such that for any $\left[ s,t%
\right] $ satisfying $\omega \left( s,t\right) \leq \delta _{p,\gamma }$, we
have ($\left\Vert y_{s}^{1}\right\Vert \geq \left\Vert \pi _{0}\left(
y_{s}^{1}\right) \right\Vert =1$) 
\begin{eqnarray*}
&&\left\Vert \pi _{k}\left( y_{t}^{1}\right) -\pi _{k}\left(
y_{t}^{2}\right) \right\Vert \\
&\leq &\sum_{j=1}^{k}\left\Vert \pi _{k-j}\left( y_{s}^{1}\right)
\right\Vert \left\Vert \pi _{j}\left( y_{s,t}^{1}\right) -\pi _{j}\left(
y_{s,t}^{2}\right) \right\Vert +\sum_{j=1}^{k}\left\Vert \pi _{j}\left(
y_{s}^{1}\right) -\pi _{j}\left( y_{s}^{2}\right) \right\Vert \left\Vert \pi
_{k-j}\left( y_{s,t}^{2}\right) \right\Vert \\
&\leq &C_{p,\gamma }\left\Vert y_{s}^{1}\right\Vert \left\Vert
y_{s}^{1}-y_{s}^{2}\right\Vert \text{.}
\end{eqnarray*}
\end{proof}

Then we re-state Theorem \ref{Theorem existence and uniqueness}, and give a
proof.

\bigskip

\noindent \textbf{Theorem\ \ref{Theorem existence and uniqueness}} \ \textit{%
For }$\gamma >p\geq 1$\textit{, suppose }$X\in C^{p-var}\left( \left[ 0,T%
\right] ,G^{\left[ p\right] }\left( \mathcal{V}\right) \right) $\textit{, }$%
f\in L\left( \mathcal{V},C^{\gamma }\left( \mathcal{U},\mathcal{U}\right)
\right) $\textit{\ and }$\xi \in G^{\left[ p\right] }\left( \mathcal{U}%
\right) $\textit{. Then the rough differential equation}%
\begin{equation}
dY=f\left( Y\right) dX\text{, }Y_{0}=\xi \text{,}
\label{RDE in proof of main theorem}
\end{equation}%
\textit{has a unique solution (denoted as }$Y$\textit{) in the sense of
Definition \ref{Definition of solution of RDE}, which is a continuous path
taking values in }$G^{\left[ p\right] }\left( \mathcal{U}\right) $\textit{.
If define control }$\omega $\textit{\ by}%
\begin{equation*}
\omega \left( s,t\right) :=\left\vert f\right\vert _{\gamma }^{p}\left\Vert
X\right\Vert _{p-var,\left[ s,t\right] }^{p}\text{, }\forall 0\leq s\leq
t\leq T\text{,}
\end{equation*}%
\textit{then there exists a constant }$C_{p}$\textit{\ such that, for any }$%
0\leq s\leq t\leq T$\textit{,}%
\begin{equation}
\left\Vert Y\right\Vert _{p-var,\left[ s,t\right] }\leq C_{p}\left( \omega
\left( s,t\right) ^{\frac{1}{p}}\vee \omega \left( s,t\right) \right) \text{.%
}  \label{bound of Y in proof of main theorem}
\end{equation}%
\textit{Moreover, for }$0\leq s\leq t\leq T$\textit{, if let }$y^{s,t}:\left[
0,1\right] \rightarrow L^{\left[ p\right] }\left( \mathcal{U}\right) $%
\textit{\ denote the solution of the ordinary differential equation}%
\begin{equation}
dy_{u}^{s,t}=\tsum\nolimits_{k=1}^{\left[ p\right] }F\left( f\left( \cdot
+\pi _{1}\left( Y_{s}\right) \right) \right) ^{\circ k}\pi _{k}\left( \log
X_{s,t}\right) \left( Id_{L^{\left[ p\right] }\left( \mathcal{U}\right)
}\right) \left( y_{u}^{s,t}\right) du\text{, }u\in \left[ 0,1\right] \text{, 
}y_{0}^{s,t}=1\text{,}  \label{definition of ys,t in proof of main theorem}
\end{equation}%
\textit{then }$y^{s,t}$\textit{\ takes value in }$G^{\left[ p\right] }\left( 
\mathcal{U}\right) $\textit{, and there exists a constant }$C_{p}$\textit{,
such that,}%
\begin{gather}
\left\Vert Y_{s,t}-y_{1}^{s,t}\right\Vert \leq C_{p}\left( \omega \left(
s,t\right) ^{\frac{\left[ p\right] +1}{p}}\vee \omega \left( s,t\right) ^{%
\left[ p\right] }\right) \text{,}
\label{estimate of difference in proof of main theorem} \\
\left\Vert Y_{s,t}-\tsum\nolimits_{k=1}^{\left[ p\right] }F\left( f\left(
\cdot +\pi _{1}\left( Y_{s}\right) \right) \right) ^{\circ k}\pi _{k}\left(
X_{s,t}\right) \left( Id_{L^{\left[ p\right] }\left( \mathcal{U}\right)
}\right) \left( 1\right) \right\Vert \leq C_{p}\left( \omega \left(
s,t\right) ^{\frac{\left[ p\right] +1}{p}}\vee \omega \left( s,t\right) ^{%
\left[ p\right] }\right) \text{.}  \notag
\end{gather}

\bigskip

\medskip

\begin{proof}
\label{Proof of Theorem existence and uniqueness}Firstly, with $\delta
_{p,\gamma }\in (0,1]$ selected in Lemma \ref{Lemma continuity in initial
value of dyadic solutions}, we assume $\omega \left( 0,T\right) \leq \delta
_{p,\gamma }$ and prove existence and uniqueness. Denote dyadic partitions $%
\Lambda \left( n\right) =\left\{ t_{j}^{n}\right\} $ of $\omega $ as in
Notation \ref{Notation dyadic partition of a control} on p\pageref{Notation
dyadic partition of a control}. Let $y^{n}:\left[ 0,T\right] \rightarrow G^{%
\left[ p\right] }\left( \mathcal{U}\right) $ be the solution to the ODE%
\begin{equation*}
dy_{u}^{n}=\sum_{k=1}^{\left[ p\right] }F\left( f\right) ^{\circ k}\pi
_{k}\left( \log X_{t_{j}^{n},t_{j+1}^{n}}\right) \left( Id_{L^{\left[ p%
\right] }\left( \mathcal{U}\right) }\right) \left( y_{u}^{n}\right) \frac{dt%
}{t_{j+1}^{n}-t_{j}^{n}}\text{, }t\in \left[ t_{j}^{n},t_{j+1}^{n}\right] 
\text{, }y_{0}^{n}=\xi \text{.}
\end{equation*}%
For $\left[ s,t\right] \subseteq \left[ 0,T\right] $, let $y^{n,s,t}:\left[
0,1\right] \rightarrow G^{\left[ p\right] }\left( \mathcal{U}\right) $ be
the solution of the ODE%
\begin{equation*}
dy_{u}^{n,s,t}=\sum_{k=1}^{\left[ p\right] }F\left( f\left( \cdot +\pi
_{1}\left( y_{s}^{n}\right) \right) \right) ^{\circ k}\pi _{k}\left( \log
X_{s,t}\right) \left( Id_{L^{\left[ p\right] }\left( \mathcal{U}\right)
}\right) \left( y_{u}^{n,s,t}\right) du\text{, }u\in \left[ 0,1\right] \text{%
, }y_{0}^{n,s,t}=1\text{.}
\end{equation*}%
Based Lemma \ref{Lemma dyadic paths are uniformly bounded} (uniform bound on
concatenated dyadic ODEs), we have, ($\omega \left( 0,T\right) \leq \delta
_{p,\gamma }$) 
\begin{equation}
\sup_{n\geq 1}\sup_{u\in \left[ 0,T\right] }\left\Vert y_{u}^{n}\right\Vert
\leq C_{p}\text{. }  \label{inner uniform bound on ynu}
\end{equation}

Suppose $m\geq n\geq 1$. For $j=0,1,\dots ,2^{n}$, as in the proof of Thm
2.3\ by Davie \cite{AMDavie}, we let $Z^{j}$ be the solution of the ODE (the
ODE approximation w.r.t. $\Lambda \left( m\right) $ starting at time $%
t_{j}^{n}$ from point $y_{t_{j}^{n}}^{n}$)%
\begin{equation*}
dZ_{u}^{j}=\sum_{k=1}^{\left[ p\right] }F\left( f\right) ^{\circ k}\pi
_{k}\left( \log X_{t_{l}^{m},t_{l+1}^{m}}\right) \left( Id_{L^{\left[ p%
\right] }\left( \mathcal{U}\right) }\right) \left( Z_{u}^{j}\right) \frac{dt%
}{t_{l+1}^{m}-t_{l}^{m}}\text{, }t\in \left[ t_{l}^{m},t_{l+1}^{m}\right] 
\text{, }l\geq j\text{, }Z_{t_{j}^{n}}^{j}=y_{t_{j}^{n}}^{n}\text{.}
\end{equation*}%
Then $Z_{t_{j}^{n}}^{0}=y_{t_{j}^{n}}^{m}$ and $%
Z_{t_{j}^{n}}^{j}=y_{t_{j}^{n}}^{n}$. Moreover, based on $\left( \ref%
{estimate uniform bound on difference on small interval}\right) $ in Lemma %
\ref{Lemma dyadic paths are uniformly bounded} on p\pageref{estimate uniform
bound on difference on small interval}, for $j=0,1,\dots ,2^{n}-1$,%
\begin{equation*}
\left\Vert
Z_{t_{j}^{n},t_{j+1}^{n}}^{j}-y_{1}^{n,t_{j}^{n},t_{j+1}^{n}}\right\Vert
\leq C_{p}\omega \left( t_{j}^{n},t_{j+1}^{n}\right) ^{\frac{\left[ p\right]
+1}{p}}\text{.}
\end{equation*}%
Combined with Lemma \ref{Lemma continuity in initial value of dyadic
solutions} (continuity in initial value of dyadic ODE approximations) and
using $\left( \ref{inner uniform bound on ynu}\right) $, we have, for $%
k=0,1,\dots ,2^{n}$, 
\begin{eqnarray*}
\left\Vert y_{t_{k}^{n}}^{m}-y_{t_{k}^{n}}^{n}\right\Vert &\leq
&\sum_{j=0}^{k-1}\left\Vert Z_{t_{k}^{n}}^{j+1}-Z_{t_{k}^{n}}^{j}\right\Vert
\leq C_{p,\gamma }\sum_{j=0}^{k-1}\left\Vert
Z_{t_{j+1}^{n}}^{j+1}-Z_{t_{j+1}^{n}}^{j}\right\Vert =C_{p,\gamma
}\sum_{j=0}^{k-1}\left\Vert y_{t_{j}^{n}}^{n}\otimes \left(
y_{1}^{n,t_{j}^{n},t_{j+1}^{n}}-Z_{t_{j}^{n},t_{j+1}^{n}}^{j}\right)
\right\Vert \\
&\leq &C_{p,\gamma }\sum_{j=0}^{2^{n}-1}\omega \left(
t_{j}^{n},t_{j+1}^{n}\right) ^{\frac{\left[ p\right] +1}{p}}\leq C_{p,\gamma
}\left( 2^{n\left( 1-\frac{\left[ p\right] +1}{p}\right) }\right) \omega
\left( 0,T\right) ^{\frac{\left[ p\right] +1}{p}}\rightarrow 0\text{ as }%
n\rightarrow \infty \text{.}
\end{eqnarray*}%
Then for any fixed $n$, $y^{m}$ converge uniformly on $\Lambda \left(
n\right) $ as $m\rightarrow \infty $. Denote the limit as $Y$, which is
densely defined. Combined with Lemma \ref{Lemma dyadic paths are uniformly
bounded} on p\pageref{Lemma dyadic paths are uniformly bounded}, $Y$ extends
to a continuous path, and for any $\left[ s,t\right] \subseteq \left[ 0,T%
\right] $, $\omega \left( s,t\right) \leq 1$,%
\begin{equation*}
\left\vert \!\left\vert \!\left\vert Y_{s,t}\right\vert \!\right\vert
\!\right\vert \leq C_{p}\omega \left( s,t\right) ^{\frac{1}{p}}\text{ and }%
\left\Vert Y_{s,t}-y_{1}^{s,t}\right\Vert \leq C_{p}\omega \left( s,t\right)
^{\frac{\left[ p\right] +1}{p}}\text{,}
\end{equation*}%
where $y^{s,t}:\left[ 0,1\right] \rightarrow G^{\left[ p\right] }\left( 
\mathcal{U}\right) $ is the solution of the ODE%
\begin{equation*}
dy_{u}^{s,t}=\sum_{k=1}^{\left[ p\right] }F\left( f\left( \cdot +\pi
_{1}\left( Y_{s}\right) \right) \right) ^{\circ k}\pi _{k}\left( \log
X_{s,t}\right) \left( Id_{L^{\left[ p\right] }\left( \mathcal{U}\right)
}\right) \left( y_{u}^{s,t}\right) du\text{, }u\in \left[ 0,1\right] \text{, 
}y_{0}^{s,t}=1\text{.}
\end{equation*}%
Combined with the Euler expansion of $y_{1}^{s,t}$ in Lemma \ref{Lemma Euler
expansion of solution of ODE and difference between one step and two steps}
on p\pageref{Lemma Euler expansion of solution of ODE and difference between
one step and two steps}, we have%
\begin{equation*}
\left\Vert Y_{s,t}-\tsum\nolimits_{k=1}^{\left[ p\right] }F\left( f\left(
\cdot +\pi _{1}\left( Y_{s}\right) \right) \right) ^{\circ k}\pi _{k}\left(
X_{s,t}\right) \left( Id_{L^{\left[ p\right] }\left( \mathcal{U}\right)
}\right) \left( 1\right) \right\Vert \leq C_{p}\omega \left( s,t\right) ^{%
\frac{\left[ p\right] +1}{p}}\text{.}
\end{equation*}%
Thus, based on the definition of solution (Definition \ref{Definition of
solution of RDE}), $Y$ is a solution to the RDE\ $\left( \ref{RDE in proof
of main theorem}\right) $. Based on\ Lemma \ref{Lemma simple property of
solution of ODE} on p\pageref{Lemma simple property of solution of ODE}, $%
y^{n}$ takes value in $G^{\left[ p\right] }\left( \mathcal{U}\right) $, so $%
Y $ takes value in $G^{\left[ p\right] }\left( \mathcal{U}\right) $.

Then we prove that the solution is unique. Suppose $\widetilde{Y}$ is
another solution. Then, by combining the definition of solution and the
Euler expansion of ODE as in Lemma \ref{Lemma Euler expansion of solution of
ODE and difference between one step and two steps} on p\pageref{Lemma Euler
expansion of solution of ODE and difference between one step and two steps},
for some $\theta :\left\{ 0\leq s\leq t\leq T\right\} \rightarrow \overline{%
\mathbb{R}
^{+}}$ satisfying%
\begin{equation*}
\lim_{\left\vert D\right\vert \rightarrow 0}\tsum\nolimits_{j,t_{j}\in
D}\theta \left( t_{j},t_{j+1}\right) =0\text{,}
\end{equation*}%
we have, for all sufficiently small $\left[ s,t\right] \subseteq \left[ 0,T%
\right] $,%
\begin{equation}
\left\Vert \widetilde{Y}_{s,t}-\widetilde{y}_{1}^{s,t}\right\Vert \leq
C_{p}\omega \left( s,t\right) ^{\frac{\left[ p\right] +1}{p}}+\theta \left(
s,t\right) \text{,}  \label{inner bound on increment of Ytilde}
\end{equation}%
where $\widetilde{y}^{s,t}:\left[ 0,1\right] \rightarrow L^{\left[ p\right]
}\left( \mathcal{U}\right) $ is the solution of the ODE%
\begin{equation*}
d\widetilde{y}_{u}^{s,t}=\sum_{k=1}^{\left[ p\right] }F\left( f\left( \cdot
+\pi _{1}\left( \widetilde{Y}_{s}\right) \right) \right) ^{\circ k}\pi
_{k}\left( \log X_{s,t}\right) \left( Id_{L^{\left[ p\right] }\left( 
\mathcal{U}\right) }\right) \left( \widetilde{y}_{u}^{s,t}\right) du\text{, }%
u\in \left[ 0,1\right] \text{, }\widetilde{y}_{0}^{s,t}=1\text{.}
\end{equation*}%
For integer $n\geq 1$ and $j=0,1,\dots ,2^{n}$, denote $\widetilde{Z}^{j}$
as the solution of the ODE (the ODE approximation w.r.t. $\Lambda \left(
n\right) $ starting at time $t_{j}^{n}$ from point $\widetilde{Y}%
_{t_{j}^{n}} $),%
\begin{equation*}
d\widetilde{Z}_{u}^{j}=\sum_{k=1}^{\left[ p\right] }F\left( f\right) ^{\circ
k}\pi _{k}\left( \log X_{t_{l}^{n},t_{l+1}^{n}}\right) \left( Id_{L^{\left[ p%
\right] }\left( \mathcal{U}\right) }\right) \left( \widetilde{Z}%
_{u}^{j}\right) \frac{dt}{t_{l+1}^{n}-t_{l}^{n}}\text{, }t\in \left[
t_{l}^{n},t_{l+1}^{n}\right] \text{, }l\geq j\text{, }\widetilde{Z}%
_{t_{j}^{n}}^{j}=\widetilde{Y}_{t_{j}^{n}}\text{.}
\end{equation*}%
Then $\widetilde{Z}_{t_{j}^{n}}^{0}=y_{t_{j}^{n}}^{n}$ and $\widetilde{Z}%
_{t_{j}^{n}}^{j}=\widetilde{Y}_{t_{j}^{n}}$. Based on $\left( \ref{inner
bound on increment of Ytilde}\right) $ and that $\omega \left( 0,T\right)
\leq \delta _{p,\gamma }\leq 1$, $\widetilde{Y}$ is bounded. Then by using
Lemma \ref{Lemma continuity in initial value of dyadic solutions}
(continuity in initial value of dyadic ODE approximations) and $\left( \ref%
{inner bound on increment of Ytilde}\right) $, we have, for $k=0,1,\dots
,2^{n}$, 
\begin{eqnarray*}
\left\Vert \widetilde{Y}_{t_{k}^{n}}-y_{t_{k}^{n}}^{n}\right\Vert &\leq
&\sum_{j=0}^{k-1}\left\Vert \widetilde{Z}_{t_{k}^{n}}^{j+1}-\widetilde{Z}%
_{t_{k}^{n}}^{j}\right\Vert \leq C_{p,\gamma }\sum_{j=0}^{k-1}\left\Vert 
\widetilde{Y}_{t_{j}^{n}}\otimes \left( \widetilde{Y}%
_{t_{j}^{n},t_{j+1}^{n}}-\widetilde{y}_{1}^{t_{j}^{n},t_{j+1}^{n}}\right)
\right\Vert \\
&\leq &C_{p,\gamma ,\omega ,\theta }\sum_{j=0}^{2^{n}-1}\left( \omega \left(
t_{j}^{n},t_{j+1}^{n}\right) ^{\frac{\left[ p\right] +1}{p}}+\theta \left(
t_{j}^{n},t_{j+1}^{n}\right) \right) \rightarrow 0\text{ as }n\rightarrow
\infty \text{.}
\end{eqnarray*}%
As a result, $\widetilde{Y}$ is the uniform limit of $y^{n}$, so coincides
with $Y$.

Then we prove $\left( \ref{bound of Y in proof of main theorem}\right) $.
Since $Y$ is the uniform limit of $y^{n}$, based on $\left( \ref{estimate
uniform bound on dyadic solution on small interval}\right) $ in Lemma \ref%
{Lemma dyadic paths are uniformly bounded} on p\pageref{estimate uniform
bound on dyadic solution on small interval}, for $\left[ s,t\right]
\subseteq \left[ 0,T\right] $ satisfying $\omega \left( s,t\right) \leq 1$,
we have%
\begin{equation*}
\left\vert \!\left\vert \!\left\vert Y_{s,t}\right\vert \!\right\vert
\!\right\vert \leq \omega \left( s,t\right) ^{\frac{1}{p}}\text{, so by
using sub-additivity of }\omega \text{, }\left\Vert Y\right\Vert _{p-var,%
\left[ s,t\right] }\leq \omega \left( s,t\right) ^{\frac{1}{p}}\text{.}
\end{equation*}%
When $\omega \left( s,t\right) \geq 1$, as in Prop 5.10 \cite%
{FrizVictoirbook}, we decompose $\left[ s,t\right] =\cup _{j=0}^{n-1}\left[
t_{j},t_{j+1}\right] $ with $\omega \left( t_{j},t_{j+1}\right) =1$, $%
j=0,\dots ,n-2$, $\omega \left( t_{n-1},t_{n}\right) \leq 1$. Then%
\begin{equation*}
n=1+\sum_{j=0}^{n-2}\omega \left( t_{j},t_{j+1}\right) \leq 2\omega \left(
s,t\right) \text{,}
\end{equation*}%
and (since $\left\vert \!\left\vert \!\left\vert \cdot \right\vert
\!\right\vert \!\right\vert $ is equivalent to an additive norm upto a
constant depending on $p$, Exer 7.38 \cite{FrizVictoirbook}) we have 
\begin{equation}
\left\Vert Y\right\Vert _{p-var,\left[ s,t\right] }\leq
C_{p}\sum_{j=0}^{n-1}\left\Vert Y\right\Vert _{p-var,\left[ t_{j},t_{j+1}%
\right] }\leq C_{p}n\leq C_{p}\omega \left( s,t\right) \text{.}
\label{inner estimate on big interval based on small intervals}
\end{equation}%
As a result, we have%
\begin{equation*}
\left\Vert Y\right\Vert _{p-var,\left[ s,t\right] }\leq C_{p}\left( \omega
\left( s,t\right) ^{\frac{1}{p}}\vee \omega \left( s,t\right) \right) \text{.%
}
\end{equation*}

Then we prove $\left( \ref{estimate of difference in proof of main theorem}%
\right) $. When $\omega \left( s,t\right) \leq 1$, by using Lemma \ref{Lemma
dyadic paths are uniformly bounded} (uniform estimate of dyadic
approximations) and that $y^{n}$ converge uniformly to $Y$, we have, (with $%
y^{s,t}$ defined at $\left( \ref{definition of ys,t in proof of main theorem}%
\right) $) 
\begin{equation*}
\left\Vert Y_{s,t}-y_{1}^{s,t}\right\Vert \leq C_{p}\omega \left( s,t\right)
^{\frac{\left[ p\right] +1}{p}}\text{.}
\end{equation*}%
Combined with Lemma \ref{Lemma Euler expansion of solution of ODE and
difference between one step and two steps} (high order Euler expansion of
solution of ODE), we have%
\begin{equation*}
\left\Vert Y_{s,t}-\tsum\nolimits_{k=1}^{\left[ p\right] }F\left( f\left(
\cdot +\pi _{1}\left( Y_{s}\right) \right) \right) ^{\circ k}\pi _{k}\left(
X_{s,t}\right) \left( Id_{L^{\left[ p\right] }\left( \mathcal{U}\right)
}\right) \left( 1\right) \right\Vert \leq C_{p}\omega \left( s,t\right) ^{%
\frac{\left[ p\right] +1}{p}}\text{.}
\end{equation*}%
When $\omega \left( s,t\right) \geq 1$, based on $\left( \ref{bound of Y in
proof of main theorem}\right) $, 
\begin{equation*}
\left\Vert Y_{s,t}\right\Vert =\tsum\nolimits_{k=1}^{\left[ p\right]
}\left\Vert \pi _{k}\left( Y_{s,t}\right) \right\Vert \leq \omega \left(
s,t\right) ^{\left[ p\right] }\text{.}
\end{equation*}%
On the other hand, based on Lemma \ref{Lemma explicit form of Fcirck}
(explicit expression of $F\left( f\right) ^{\circ k}$), it can be proved
inductively that, 
\begin{equation*}
\sup_{u\in \left[ 0,1\right] }\left\Vert \pi _{k}\left( y_{u}^{s,t}\right)
\right\Vert \leq C_{p}\omega \left( s,t\right) ^{\frac{k\left[ p\right] }{p}}%
\text{, }k=1,2,\dots ,\left[ p\right] \text{, so }\left\Vert
y_{1}^{s,t}\right\Vert \leq C_{p}\omega \left( s,t\right) ^{\frac{\left[ p%
\right] ^{2}}{p}}\leq C_{p}\omega \left( s,t\right) ^{\left[ p\right] }\text{%
.}
\end{equation*}%
Then%
\begin{equation*}
\left\Vert Y_{s,t}-y_{1}^{s,t}\right\Vert \leq \left\Vert Y_{s,t}\right\Vert
+\left\Vert y_{1}^{s,t}\right\Vert \leq C_{p}\omega \left( s,t\right) ^{ 
\left[ p\right] }\text{.}
\end{equation*}%
For high order Euler expansion, when $\omega \left( s,t\right) \geq 1$, 
\begin{equation*}
\left\Vert Y_{s,t}-\tsum\nolimits_{k=1}^{\left[ p\right] }F\left( f\left(
\cdot +\pi _{1}\left( Y_{s}\right) \right) \right) ^{\circ k}\pi _{k}\left(
X_{s,t}\right) \left( Id_{L^{\left[ p\right] }\left( \mathcal{U}\right)
}\right) \left( 1\right) \right\Vert \leq C_{p}\omega \left( s,t\right) ^{%
\left[ p\right] }+C_{p}\omega \left( s,t\right) ^{\frac{\left[ p\right] }{p}%
}\leq C_{p}\omega \left( s,t\right) ^{\left[ p\right] }\text{.}
\end{equation*}
\end{proof}

\subsection{Continuity of solution in initial value, vector field and
driving noise}

\begin{proof}[Proof of Theorem \protect\ref{Theorem continuity}]
\label{Proof of Theorem continuity}We assume $\left\vert f^{i}\right\vert
_{\gamma }=1$, $i=1,2$. Otherwise, we replace $f^{i}$ and $X^{i}$ by $%
\left\vert f^{i}\right\vert _{\gamma }^{-1}f^{i}$ and $\delta _{\left\vert
f^{i}\right\vert _{\gamma }}X^{i}$, and the solution $Y^{i}$ will stay
unchanged based on the definition of solution of RDE (in Definition \ref%
{Definition of solution of RDE}). Replace $\gamma $ by $\gamma \wedge \left( %
\left[ p\right] +1\right) $, so $\left[ p\right] +1\geq \gamma >p$.

Fix interval $\left[ s_{0},t_{0}\right] $ satisfying $\omega \left(
s_{0},t_{0}\right) \leq 1$. For $i=1,2$ and $\left[ s,t\right] \subseteq %
\left[ s_{0},t_{0}\right] $, let $y^{i,s,t}:\left[ 0,1\right] \rightarrow G^{%
\left[ p\right] }\left( \mathcal{U}\right) $ be the solution of the ODE%
\begin{equation}
dy_{r}^{i,s,t}=\tsum\nolimits_{k=1}^{\left[ p\right] }F\left( f^{i}\left(
\cdot +\pi _{1}\left( Y_{s_{0}}^{i}\right) \right) \right) ^{\circ k}\pi
_{k}\left( \log \left( X_{s,t}\right) \right) \left( Id_{L^{\left[ p\right]
}\left( \mathcal{U}\right) }\right) \left( y_{r}^{i,s,t}\right) dr\text{, }%
r\in \left[ 0,1\right] \text{, }y_{0}^{i,s,t}=Y_{s_{0},s}^{i}\text{.}
\label{inner definition of y(i,s,t)}
\end{equation}%
Denote%
\begin{gather}
Y_{t}:=Y_{s_{0},t}^{1}-Y_{s_{0},t}^{2}\text{, }%
y_{1}^{s,t}:=y_{1}^{1,s,t}-y_{1}^{2,s,t}\text{,}  \notag \\
\Gamma
^{i,s,t}:=Y_{s_{0},t}^{i}-y_{1}^{i,s,t}=Y_{s_{0},t}^{i}-Y_{s_{0},s}^{i}-%
\left( y_{1}^{i,s,t}-Y_{s_{0},s}^{i}\right) \text{,}
\label{inner notation of Gamma(i,s,t)} \\
\Gamma ^{s,t}:=\Gamma ^{1,s,t}-\Gamma ^{2,s,t}=Y_{t}-y_{1}^{s,t}\text{.} 
\notag
\end{gather}

For $s_{0}\leq s\leq u\leq t\leq t_{0}$, let $y^{i,s,u,t}:\left[ 0,2\right]
\rightarrow G^{\left[ p\right] }\left( \mathcal{U}\right) $ be the solution
of the ODE%
\begin{equation}
dy_{r}^{i,s,u,t}=\left\{ 
\begin{array}{cc}
\sum_{k=1}^{\left[ p\right] }F\left( f^{i}\left( \cdot +\pi _{1}\left(
Y_{s_{0}}^{i}\right) \right) \right) ^{\circ k}\pi _{k}\left( \log
X_{s,u}^{i}\right) \left( Id_{L^{\left[ p\right] }\left( \mathcal{U}\right)
}\right) \left( y_{r}^{i,s,u,t}\right) dr, & r\in \left[ 0,1\right] \\ 
\sum_{k=1}^{\left[ p\right] }F\left( f^{i}\left( \cdot +\pi _{1}\left(
Y_{s_{0}}^{i}\right) \right) \right) ^{\circ k}\pi _{k}\left( \log
X_{u,t}^{i}\right) \left( Id_{L^{\left[ p\right] }\left( \mathcal{U}\right)
}\right) \left( y_{r}^{i,s,u,t}\right) dr, & r\in \left[ 1,2\right]%
\end{array}%
\right. \text{, }y_{0}^{i,s,u,t}=Y_{s_{0},s}^{i}\text{,}
\label{inner definition of y(i,s,u,t)}
\end{equation}%
and denote%
\begin{equation*}
y^{s,u,t}:=y^{1,s,u,t}-y^{2,s,u,t}\text{.}
\end{equation*}

Then, we have%
\begin{eqnarray}
\left\Vert \Gamma ^{s,u}+\Gamma ^{u,t}-\Gamma ^{s,t}\right\Vert
&=&\left\Vert y_{1}^{s,u}-Y_{s}+y_{1}^{u,t}-Y_{u}-\left(
y_{1}^{s,t}-Y_{s}\right) \right\Vert
\label{inner important bisect Gamma (s,t)} \\
&\leq &\left\Vert y_{1}^{u,t}-Y_{u}-\left( y_{2}^{s,u,t}-y_{1}^{s,u}\right)
\right\Vert +\left\Vert y_{2}^{s,u,t}-y_{1}^{s,t}\right\Vert  \notag
\end{eqnarray}

Based on Lemma \ref{Lemma B} on p\pageref{Lemma B}, we have%
\begin{equation}
\left\Vert y_{2}^{s,u,t}-y_{1}^{s,t}\right\Vert \leq C_{p}\left( \omega
\left( s,t\right) ^{\frac{\gamma }{p}}\left( \left\Vert Y_{s}\right\Vert
+\left\vert f^{1}-f^{2}\right\vert _{\gamma -1}\right)
+\tsum\nolimits_{n=1}^{\left[ p\right] }\omega \left( s,t\right) ^{\frac{%
\gamma -n}{p}}d_{p,\left[ s,t\right] }^{n}\left( X^{1},X^{2}\right) \right) 
\text{.}  \label{inner important Lemma A}
\end{equation}

On the other hand, since $y^{u,t}=y^{1,u,t}-y^{2,u,t}$ and $%
y^{s,u,t}=y^{1,s,u,t}-y^{2,s,u,t}$, based on the definition of $y^{i,s,u}$
(at $\left( \ref{inner definition of y(i,s,t)}\right) $) and $y^{i,s,u,t}$
(at $\left( \ref{inner definition of y(i,s,u,t)}\right) $), we have, for $%
r\in \left[ 0,1\right] $,%
\begin{eqnarray*}
&&\left\Vert y_{r}^{u,t}-Y_{u}-\left( y_{r+1}^{s,u,t}-y_{1}^{s,u}\right)
\right\Vert \\
&\leq &\sum_{k=1}^{\left[ p\right] }\int_{0}^{r}\left\Vert F\left(
f^{1}\right) ^{\circ k}\pi _{k}\left( \log X_{u,t}^{1}\right) \left( Id_{L^{%
\left[ p\right] }\left( \mathcal{U}\right) }\right) \left(
y_{v}^{1,u,t}\right) -F\left( f^{1}\right) ^{\circ k}\pi _{k}\left( \log
X_{u,t}^{1}\right) \left( Id_{L^{\left[ p\right] }\left( \mathcal{U}\right)
}\right) \left( y_{v+1}^{1,s,u,t}\right) \right. \\
&&\left. -F\left( f^{2}\right) ^{\circ k}\pi _{k}\left( \log
X_{u,t}^{2}\right) \left( Id_{L^{\left[ p\right] }\left( \mathcal{U}\right)
}\right) \left( y_{v}^{2,u,t}\right) +F\left( f^{2}\right) ^{\circ k}\pi
_{k}\left( \log X_{u,t}^{2}\right) \left( Id_{L^{\left[ p\right] }\left( 
\mathcal{U}\right) }\right) \left( Id_{L^{\left[ p\right] }\left( \mathcal{U}%
\right) }\right) \left( y_{v+1}^{2,s,u,t}\right) \right\Vert dv\text{.}
\end{eqnarray*}%
Since $\omega \left( s_{0},t_{0}\right) \leq 1$, we have $%
\max_{i=1,2}\sup_{v\in \left[ 0,1\right] }\left\Vert
y_{v}^{i,u,t}\right\Vert \vee \left\Vert y_{v+1}^{i,s,u,t}\right\Vert \leq
C_{p}$. Then by using Lemma \ref{Lemma simple but important} on p\pageref%
{Lemma simple but important} and estimating the two cases $k=1,\dots ,\left[
p\right] -1$ and $k=\left[ p\right] $ separately, we have 
\begin{eqnarray}
&&\left\Vert y_{r}^{u,t}-Y_{u}-\left( y_{r+1}^{s,u,t}-y_{1}^{s,u}\right)
\right\Vert  \label{inner important 1} \\
&\leq &C_{p}\omega \left( s,t\right) ^{\frac{1}{p}}\int_{0}^{r}\left\Vert
y_{v}^{u,t}-y_{v+1}^{s,u,t}\right\Vert dv  \notag \\
&&+C_{p}\sup_{r\in \left[ 0,1\right] }\left( \left\Vert
y_{r}^{1,u,t}-y_{r+1}^{1,s,u,t}\right\Vert +\left\Vert
y_{r}^{2,u,t}-y_{r+1}^{2,s,u,t}\right\Vert \right)  \notag \\
&&\times \left( \omega \left( s,t\right) ^{\frac{1}{p}}\left( \left\Vert
y_{r}^{1,u,t}-y_{r}^{2,u,t}\right\Vert +\left\vert f^{1}-f^{2}\right\vert
_{\gamma -1}\right) +\tsum\nolimits_{n=1}^{\left[ p\right] -1}d_{p,\left[ s,t%
\right] }^{n}\left( X^{1},X^{2}\right) \right)  \notag \\
&&+C_{p}\sup_{r\in \left[ 0,1\right] }\left( \left\Vert
y_{r}^{1,u,t}-y_{r+1}^{1,s,u,t}\right\Vert +\left\Vert
y_{r}^{2,u,t}-y_{r+1}^{2,s,u,t}\right\Vert \right) ^{\left\{ \gamma \right\}
}  \notag \\
&&\times \left( \omega \left( s,t\right) ^{\frac{\left[ p\right] }{p}}\left(
\left\Vert y_{r}^{1,u,t}-y_{r}^{2,u,t}\right\Vert +\left\vert
f^{1}-f^{2}\right\vert _{\gamma -1}\right) +d_{p,\left[ s,t\right] }^{\left[
p\right] }\left( X^{1},X^{2}\right) \right) \text{.}  \notag
\end{eqnarray}%
Since $\left\Vert Y_{s_{0},u}^{i}\right\Vert \leq C_{p}$, based on\ Lemma %
\ref{Lemma two ODE with different vector fields} on p\pageref{Lemma two ODE
with different vector fields} (continuity of ODE solutions in initial value)
and $\left( \ref{estimate of difference in main theorem}\right) $ in Theorem %
\ref{Theorem existence and uniqueness} on p\pageref{estimate of difference
in main theorem} (difference between RDE solution and ODE solution), we
have, for $i=1,2$,%
\begin{equation}
\sup_{r\in \left[ 0,1\right] }\left\Vert
y_{r}^{i,u,t}-y_{r+1}^{i,s,u,t}\right\Vert \leq C_{p}\left\Vert
Y_{s_{0},u}^{i}-y_{1}^{i,s,u}\right\Vert \leq C_{p}\omega \left( s,u\right)
^{\frac{\left[ p\right] +1}{p}}\text{.}  \label{inner important 2}
\end{equation}%
Based on Lemma \ref{Lemma two ODE with different vector fields} on p\pageref%
{Lemma two ODE with different vector fields} (continuous dependence of ODE
solution in term of initial value and vector field), ($%
Y_{u}=Y_{s_{0},u}^{1}-Y_{s_{0},u}^{2}$)%
\begin{equation}
\sup_{r\in \left[ 0,1\right] }\left\Vert
y_{r}^{1,u,t}-y_{r}^{2,u,t}\right\Vert \leq C_{p}\left( \left\Vert
Y_{u}\right\Vert +\omega \left( s,t\right) ^{\frac{1}{p}}\left\vert
f^{1}-f^{2}\right\vert _{\left[ p\right] -1}+\tsum\nolimits_{n=1}^{\left[ p%
\right] }d_{p,\left[ s,t\right] }^{n}\left( X^{1},X^{2}\right) \right) \text{%
.}  \label{inner important 3}
\end{equation}%
Then combining $\left( \ref{inner important 1}\right) $, $\left( \ref{inner
important 2}\right) $ and $\left( \ref{inner important 3}\right) $, we get%
\begin{eqnarray}
&&\left\Vert y_{r}^{u,t}-Y_{u}-\left( y_{r+1}^{s,u,t}-y_{1}^{s,u}\right)
\right\Vert  \label{inner important 4} \\
&\leq &C_{p}\omega \left( s,t\right) ^{\frac{1}{p}}\int_{0}^{r}\left\Vert
y_{v}^{u,t}-y_{v+1}^{s,u,t}\right\Vert dv  \notag \\
&&+C_{p}\omega \left( s,t\right) ^{\frac{\gamma }{p}}\left( \left\Vert
Y_{u}\right\Vert +\left\vert f^{1}-f^{2}\right\vert _{\gamma -1}\right)
+C_{p}\tsum\nolimits_{n=1}^{\left[ p\right] }\omega \left( s,t\right) ^{%
\frac{\gamma -n}{p}}d_{p,\left[ s,t\right] }^{n}\left( X^{1},X^{2}\right) 
\text{.}  \notag
\end{eqnarray}%
Then by using Gronwall's inequality, we have%
\begin{eqnarray*}
&&\sup_{r\in \left[ 0,1\right] }\left\Vert
y_{r}^{u,t}-y_{r+1}^{s,u,t}\right\Vert \\
&\leq &C_{p}\left( \left\Vert Y_{u}-y_{1}^{s,u}\right\Vert +\omega \left(
s,t\right) ^{\frac{\gamma }{p}}\left( \left\Vert Y_{u}\right\Vert
+\left\vert f^{1}-f^{2}\right\vert _{\gamma -1}\right)
+\tsum\nolimits_{n=1}^{\left[ p\right] }\omega \left( s,t\right) ^{\frac{%
\gamma -n}{p}}d_{p,\left[ s,t\right] }^{n}\left( X^{1},X^{2}\right) \right) 
\text{,}
\end{eqnarray*}%
and combined with $\left( \ref{inner important 4}\right) $, we have ($\Gamma
^{s,u}:=Y_{u}-y_{1}^{s,u}$)%
\begin{eqnarray}
&&\left\Vert y_{1}^{u,t}-Y_{u}-\left( y_{2}^{s,u,t}-y_{1}^{s,u}\right)
\right\Vert  \label{inner important Lemma B} \\
&\leq &C_{p}\left( \omega \left( s,t\right) ^{\frac{1}{p}}\left\Vert \Gamma
^{s,u}\right\Vert +\omega \left( s,t\right) ^{\frac{\gamma }{p}}\left(
\left\Vert Y_{u}\right\Vert +\left\vert f^{1}-f^{2}\right\vert _{\gamma
-1}\right) +\tsum\nolimits_{n=1}^{\left[ p\right] }\omega \left( s,t\right)
^{\frac{\gamma -n}{p}}d_{p,\left[ s,t\right] }^{n}\left( X^{1},X^{2}\right)
\right) \text{.}  \notag
\end{eqnarray}

As a result, combining $\left( \ref{inner important bisect Gamma (s,t)}%
\right) $, $\left( \ref{inner important Lemma A}\right) $ and $\left( \ref%
{inner important Lemma B}\right) $, we have%
\begin{eqnarray*}
&&\left\Vert \Gamma ^{s,u}+\Gamma ^{u,t}-\Gamma ^{s,t}\right\Vert \\
&\leq &C_{p}\left( \omega \left( s,t\right) ^{\frac{1}{p}}\left\Vert \Gamma
^{s,u}\right\Vert +\omega \left( s,t\right) ^{\frac{\gamma }{p}}\left(
\sup_{r\in \left[ s,t\right] }\left\Vert Y_{r}\right\Vert +\left\vert
f^{1}-f^{2}\right\vert _{\gamma -1}\right) +\tsum\nolimits_{n=1}^{\left[ p%
\right] }\omega \left( s,t\right) ^{\frac{\gamma -n}{p}}d_{p,\left[ s,t%
\right] }^{n}\left( X^{1},X^{2}\right) \right) \text{.}
\end{eqnarray*}

Therefore, if we denote%
\begin{equation*}
\widetilde{\omega }\left( s,t\right) :=\omega \left( s,t\right) \left(
\sup_{r\in \left[ s,t\right] }\left\Vert Y_{r}\right\Vert +\left\vert
f^{1}-f^{2}\right\vert _{\gamma -1}\right) ^{\frac{p}{\gamma }%
}+\tsum\nolimits_{n=1}^{\left[ p\right] }\omega \left( s,t\right) ^{\frac{%
\gamma -n}{\gamma }}\left( d_{p,\left[ s,t\right] }^{n}\left(
X^{1},X^{2}\right) ^{\frac{p}{n}}\right) ^{\frac{n}{\gamma }}\text{,}
\end{equation*}%
then $\widetilde{\omega }$ is a control, i.e. $\widetilde{\omega }$ is
continuous, vanishes on the diagonal and 
\begin{equation*}
\widetilde{\omega }\left( s,u\right) +\widetilde{\omega }\left( u,t\right)
\leq \widetilde{\omega }\left( s,t\right) \text{, }\forall s\leq u\leq t%
\text{.}
\end{equation*}%
Indeed, since 
\begin{equation*}
d_{p,\left[ s,t\right] }^{n}\left( X^{1},X^{2}\right) ^{\frac{p}{n}%
}=\sup_{D\subset \left[ s,t\right] }\sum_{j,t_{j}\in D}\left\Vert \pi
_{n}\left( X_{t_{j},t_{j+1}}^{1}\right) -\pi _{n}\left(
X_{t_{j},t_{j+1}}^{2}\right) \right\Vert ^{\frac{p}{n}}\text{ is a control,}
\end{equation*}%
and $\frac{\gamma -n}{\gamma }+\frac{n}{\gamma }=1$, by using H\"{o}lder
inequality (or based on $\left( iii\right) $ in Exer 1.9 \cite{Friz Victoir
book}), 
\begin{equation*}
\omega \left( s,t\right) ^{\frac{\gamma -n}{\gamma }}\left( d_{p,\left[ s,t%
\right] }^{n}\left( X^{1},X^{2}\right) ^{\frac{p}{n}}\right) ^{\frac{n}{%
\gamma }}
\end{equation*}%
is another control.

Then we have%
\begin{equation*}
\left\Vert \Gamma ^{s,u}+\Gamma ^{u,t}-\Gamma ^{s,t}\right\Vert \leq
C_{p}\omega \left( s,t\right) ^{\frac{1}{p}}\left\Vert \Gamma
^{s,u}\right\Vert +C_{p}\widetilde{\omega }\left( s,t\right) ^{\frac{\gamma 
}{p}}\text{.}
\end{equation*}%
Since $\omega $ and $\widetilde{\omega }$ are two different controls, we
select the dyadic partition in such a way that, for some integers $1\leq
m\leq M$ satisfying%
\begin{equation*}
1>\frac{m}{M}>\frac{p}{\gamma }\text{,}
\end{equation*}%
we let 
\begin{eqnarray*}
I_{0}^{0} &:&=\left[ s,t\right] \text{ and }I_{k}^{n}=I_{2k}^{n+1}\cup
I_{2k+1}^{n+1}\text{ where} \\
\widetilde{\omega }\left( I_{2k}^{n+1}\right) &=&\widetilde{\omega }\left(
I_{2k+1}^{n+1}\right) \leq \frac{1}{2}\widetilde{\omega }\left(
I_{k}^{n}\right) \text{ for }k=0,1,\dots ,2^{n}-1\text{ and }n=lM+s\text{, }%
s=1,\dots ,m\text{, }l\in 
\mathbb{N}
\text{,} \\
\omega \left( I_{2k}^{n+1}\right) &=&\omega \left( I_{2k+1}^{n+1}\right)
\leq \frac{1}{2}\omega \left( I_{k}^{n}\right) \text{ for }k=0,1,\dots
,2^{n}-1\text{ and }n=lM+s\text{, }s=m+1,\dots ,M\text{, }l\in 
\mathbb{N}
\text{.}
\end{eqnarray*}%
Therefore, we have%
\begin{eqnarray*}
\sup_{k=0,\dots ,2^{n}-1}\widetilde{\omega }\left( I_{k}^{n}\right) &\leq
&2^{-\left[ \frac{m}{M}n\right] }\widetilde{\omega }\left( s,t\right) \leq
2^{-\frac{m}{M}n+1}\widetilde{\omega }\left( s,t\right) \text{,} \\
\sup_{k=0,\dots ,2^{n}-1}\omega \left( I_{k}^{n}\right) &\leq &2^{-\left[ 
\frac{M-m}{M}n\right] }\omega \left( s,t\right) \leq 2^{-\frac{M-m}{M}%
n+1}\omega \left( s,t\right) \text{.}
\end{eqnarray*}

By recursively bisecting $\left[ s,t\right] $ in this way, we have, 
\begin{eqnarray}
\left\Vert \Gamma ^{s,t}\right\Vert &\leq &\overline{\lim }_{n\rightarrow
\infty }\sum_{k=0}^{n}\left( \prod\nolimits_{j=0}^{k}\left(
1+\max_{i=0,1,\dots ,2^{j}-1}\omega \left( I_{i}^{j}\right) ^{\frac{1}{p}%
}\right) \left( 2^{k}\max_{i=0,1,\dots ,2^{k}-1}\widetilde{\omega }\left(
I_{i}^{k}\right) ^{\frac{\gamma }{p}}\right) \right)
\label{inner estimate on dyadic interval2} \\
&&+\overline{\lim }_{n\rightarrow \infty }\prod\nolimits_{j=0}^{n}\left(
1+\max_{i=0,1,\dots ,2^{j}-1}\omega \left( I_{i}^{j}\right) ^{\frac{1}{p}%
}\right) \left( \sum_{j=0}^{2^{n}-1}\left\Vert \Gamma
^{t_{j}^{n},t_{j+1}^{n}}\right\Vert \right)  \notag \\
&\leq &\overline{\lim }_{n\rightarrow \infty }\sum_{k=0}^{n}\left(
\prod\nolimits_{j=0}^{k}\left( 1+2^{\frac{1}{p}}2^{-\frac{M-m}{Mp}j}\omega
\left( s,t\right) ^{\frac{1}{p}}\right) 2^{\left( 1-\frac{\gamma }{p}\frac{m%
}{M}\right) k+\frac{\gamma }{p}}\right) \widetilde{\omega }\left( s,t\right)
^{\frac{\gamma }{p}}  \notag \\
&&+\overline{\lim }_{n\rightarrow \infty }\prod\nolimits_{j=0}^{n}\left(
1+2^{\frac{1}{p}}2^{-\frac{M-m}{Mp}j}\omega \left( s,t\right) ^{\frac{1}{p}%
}\right) \left( \sum_{j=0}^{2^{n}-1}\left\Vert \Gamma
^{t_{j}^{n},t_{j+1}^{n}}\right\Vert \right) \text{.}  \notag
\end{eqnarray}%
Then since $1>\frac{m}{M}>\frac{p}{\gamma }$ and $||\Gamma
^{t_{j}^{n},t_{j+1}^{n}}||\leq C_{p}\omega \left(
t_{j}^{n},t_{j+1}^{n}\right) ^{\frac{\left[ p\right] +1}{p}}$($\left( \ref%
{estimate of difference in main theorem}\right) $ in Theorem \ref{Theorem
existence and uniqueness} on p\pageref{Theorem existence and uniqueness}),
we have 
\begin{eqnarray}
\left\Vert \Gamma ^{s,t}\right\Vert &\leq &\exp \left( \frac{2^{\frac{1}{p}}%
}{1-2^{-\frac{M-m}{Mp}}}\omega \left( s,t\right) ^{\frac{1}{p}}\right) \frac{%
2^{\frac{\gamma }{p}}}{1-2^{-\left( \frac{\gamma m}{pM}-1\right) }}%
\widetilde{\omega }\left( s,t\right) ^{\frac{\gamma }{p}}  \notag \\
&\leq &\exp \left( \frac{2^{\frac{1}{p}}}{1-2^{-\frac{M-m}{Mp}}}\right) 
\frac{2^{\frac{\gamma }{p}}}{1-2^{-\left( \frac{\gamma m}{pM}-1\right) }}%
\widetilde{\omega }\left( s,t\right) ^{\frac{\gamma }{p}}  \notag
\end{eqnarray}%
As a result, for any $\left[ s,t\right] \subseteq \left[ s_{0},t_{0}\right] $%
, $\omega \left( s_{0},t_{0}\right) \leq 1$,%
\begin{equation}
\left\Vert Y_{t}-y_{1}^{s,t}\right\Vert \leq C_{p,\gamma }\widetilde{\omega }%
\left( s,t\right) ^{\frac{\gamma }{p}}\text{.}
\label{inner estimate of difference between Y and y}
\end{equation}

In particular, if we let 
\begin{equation*}
\left[ s,t\right] =\left[ s_{0},t_{0}\right] \text{,}
\end{equation*}%
then $y^{i,s,t}$ is the solution of the ODE%
\begin{equation*}
dy_{r}^{i,s,t}=\sum_{k=1}^{\left[ p\right] }F\left( f^{i}\left( \cdot +\pi
_{1}\left( Y_{s}^{i}\right) \right) \right) ^{\circ k}\pi _{k}\left( \log
\left( X_{s,t}\right) \right) \left( Id_{L^{\left[ p\right] }\left( \mathcal{%
U}\right) }\right) \left( y_{r}^{i,s,t}\right) dr\text{, }r\in \left[ 0,1%
\right] \text{, }y_{0}^{i,s,t}=1\text{.}
\end{equation*}%
Based on Lemma \ref{Lemma two ODE with different vector fields} on p\pageref%
{Lemma two ODE with different vector fields}, for $k=1,2,\dots ,\left[ p%
\right] $,%
\begin{eqnarray*}
\left\Vert \pi _{k}\left( y_{1}^{s,t}\right) \right\Vert &=&\left\Vert \pi
_{k}\left( y_{1}^{1,s,t}\right) -\pi _{k}\left( y_{1}^{2,s,t}\right)
\right\Vert \\
&\leq &C_{p}\left( \omega \left( s,t\right) ^{\frac{k}{p}}\left( \left\vert
f^{1}-f^{2}\right\vert _{\gamma -1}+\left\Vert \pi _{1}\left(
Y_{s}^{1}\right) -\pi _{1}\left( Y_{s}^{2}\right) \right\Vert \right)
+\sum_{n=1}^{\left[ p\right] }\omega \left( s,t\right) ^{\frac{\left(
k-n\right) \vee 0}{p}}d_{p,\left[ s,t\right] }^{n}\left( X^{1},X^{2}\right)
\right) \text{.}
\end{eqnarray*}%
Combined with $\left( \ref{inner estimate of difference between Y and y}%
\right) $, we get ($Y_{t}=Y_{s,t}^{1}-Y_{s,t}^{2}$)%
\begin{eqnarray}
\left\Vert \pi _{k}\left( Y_{s,t}^{1}\right) -\pi _{k}\left(
Y_{s,t}^{2}\right) \right\Vert &\leq &C_{p,\gamma }\omega \left( s,t\right)
^{\frac{k}{p}}\left( \left\vert f^{1}-f^{2}\right\vert _{\gamma
-1}+\left\Vert \pi _{1}\left( Y_{s}^{1}\right) -\pi _{1}\left(
Y_{s}^{2}\right) \right\Vert \right)
\label{inner difference between two RDE solutions} \\
&&+C_{p,\gamma }\left( \sum_{n=1}^{\left[ p\right] }\omega \left( s,t\right)
^{\frac{\left( k-n\right) \vee 0}{p}}d_{p,\left[ s,t\right] }^{n}\left(
X^{1},X^{2}\right) +\omega \left( s,t\right) ^{\frac{\gamma }{p}}\left(
\sup_{u\in \left[ s,t\right] }\left\Vert Y_{s,u}^{1}-Y_{s,u}^{2}\right\Vert
\right) \right) \text{.}  \notag
\end{eqnarray}%
Recall control $\omega \left( s,t\right) :=\left\vert f^{1}\right\vert
_{\gamma }^{p}\left\Vert X^{1}\right\Vert _{p-var,\left[ s,t\right]
}^{p}+\left\vert f^{2}\right\vert _{\gamma }^{p}\left\Vert X^{2}\right\Vert
_{p-var,\left[ s,t\right] }^{p}$. For $\alpha \in (0,1]$, denote $\omega
^{\alpha }\left( s,t\right) $ as at $\left( \ref{Definition of omega alpha}%
\right) $ on p\pageref{Definition of omega alpha} and denote $d_{p,\left[ s,t%
\right] }^{n,\alpha }\left( X^{1},X^{2}\right) $ as at $\left( \ref%
{Definition of dpn^alpha}\right) $ on p\pageref{Definition of dpn^alpha}.
Then based on $\left( \ref{inner difference between two RDE solutions}%
\right) $, by using sub-additivity of control, we have ($\omega ^{\alpha
}\left( s,t\right) \leq \omega \left( s,t\right) \leq 1$)%
\begin{eqnarray}
\left\Vert \pi _{k}\left( Y_{s,t}^{1}\right) -\pi _{k}\left(
Y_{s,t}^{2}\right) \right\Vert &\leq &d_{p,\left[ s,t\right] }^{k}\left(
Y^{1},Y^{2}\right)  \label{inner difference between two RDE solutions 1} \\
&\leq &C_{p,\gamma }\omega ^{\alpha }\left( s,t\right) ^{\frac{k}{p}}\left(
\left\vert f^{1}-f^{2}\right\vert _{\gamma -1}+\sup_{u\in \left[ s,t\right]
}\left\Vert \pi _{1}\left( Y_{u}^{1}\right) -\pi _{1}\left( Y_{u}^{2}\right)
\right\Vert \right)  \notag \\
&&+C_{p,\gamma }\left( \sum_{n=1}^{\left[ p\right] }\omega ^{\alpha }\left(
s,t\right) ^{\frac{\left( k-n\right) \vee 0}{p}}d_{p,\left[ s,t\right]
}^{n,\alpha }\left( X^{1},X^{2}\right) +\omega ^{\alpha }\left( s,t\right) ^{%
\frac{\gamma }{p}}\left( \sup_{u\in \left[ s,t\right] }\left\Vert
Y_{s,u}^{1}-Y_{s,u}^{2}\right\Vert \right) \right)  \notag \\
&\leq &C_{p,\gamma }\omega ^{\alpha }\left( s,t\right) ^{\frac{k}{p}}\left(
\left\vert f^{1}-f^{2}\right\vert _{\gamma -1}+\left\Vert \pi _{1}\left(
Y_{s}^{1}\right) -\pi _{1}\left( Y_{s}^{2}\right) \right\Vert \right)  \notag
\\
&&+C_{p,\gamma }\left( \sum_{n=1}^{\left[ p\right] }\omega ^{\alpha }\left(
s,t\right) ^{\frac{\left( k-n\right) \vee 0}{p}}d_{p,\left[ s,t\right]
}^{n,\alpha }\left( X^{1},X^{2}\right) +\omega ^{\alpha }\left( s,t\right) ^{%
\frac{k}{p}}\left( \sup_{u\in \left[ s,t\right] }\left\Vert
Y_{s,u}^{1}-Y_{s,u}^{2}\right\Vert \right) \right) \text{.}  \notag
\end{eqnarray}%
As a result,%
\begin{eqnarray*}
\sup_{u\in \left[ s,t\right] }\left\Vert Y_{s,u}^{1}-Y_{s,u}^{2}\right\Vert
&\leq &C_{p,\gamma }\omega ^{\alpha }\left( s,t\right) ^{\frac{1}{p}}\left(
\left\vert f^{1}-f^{2}\right\vert _{\gamma -1}+\left\Vert \pi _{1}\left(
Y_{s}^{1}\right) -\pi _{1}\left( Y_{s}^{2}\right) \right\Vert \right) \\
&&+C_{p,\gamma }\left( \sum_{n=1}^{\left[ p\right] }d_{p,\left[ s,t\right]
}^{n,\alpha }\left( X^{1},X^{2}\right) +\omega ^{\alpha }\left( s,t\right) ^{%
\frac{1}{p}}\left( \sup_{u\in \left[ s,t\right] }\left\Vert
Y_{s,u}^{1}-Y_{s,u}^{2}\right\Vert \right) \right) \text{.}
\end{eqnarray*}%
Then there exists $\delta _{p,\gamma }>0$ such that for $\left[ s,t\right] $
satisfying $\omega \left( s,t\right) \leq \delta _{p,\gamma }$, we have%
\begin{equation*}
\sup_{u\in \left[ s,t\right] }\left\Vert Y_{s,u}^{1}-Y_{s,u}^{2}\right\Vert
\leq C_{p,\gamma }\omega ^{\alpha }\left( s,t\right) ^{\frac{1}{p}}\left(
\left\vert f^{1}-f^{2}\right\vert _{\gamma -1}+\left\Vert \pi _{1}\left(
Y_{s}^{1}\right) -\pi _{1}\left( Y_{s}^{2}\right) \right\Vert \right)
+C_{p,\gamma }\sum_{n=1}^{\left[ p\right] }d_{p,\left[ s,t\right]
}^{n,\alpha }\left( X^{1},X^{2}\right) \text{,}
\end{equation*}%
and combined with $\left( \ref{inner difference between two RDE solutions 1}%
\right) $, we have, for any $\left[ s,t\right] $ satisfying $\omega \left(
s,t\right) \leq \delta _{p,\gamma }$ and any $\alpha \in (0,\delta
_{p,\gamma }]$,%
\begin{eqnarray}
&&\left\Vert \pi _{k}\left( Y_{s,t}^{1}\right) -\pi _{k}\left(
Y_{s,t}^{2}\right) \right\Vert
\label{inner estimate when interval is less than delta} \\
&\leq &C_{p,\gamma }\omega ^{\alpha }\left( s,t\right) ^{\frac{k}{p}}\left(
\left\vert f^{1}-f^{2}\right\vert _{\gamma -1}+\left\Vert \pi _{1}\left(
Y_{s}^{1}\right) -\pi _{1}\left( Y_{s}^{2}\right) \right\Vert \right)
+C_{p,\gamma }\sum_{n=1}^{\left[ p\right] }\omega ^{\alpha }\left(
s,t\right) ^{\frac{\left( k-n\right) \vee 0}{p}}d_{p,\left[ s,t\right]
}^{n,\alpha }\left( X^{1},X^{2}\right) \text{.}  \notag
\end{eqnarray}

For $\left[ s_{0},t_{0}\right] $ satisfying $\omega \left(
s_{0},t_{0}\right) \geq \delta _{p,\gamma }$ and $\alpha \in (0,\delta
_{p,\gamma }]$, we decompose $\left[ s_{0},t_{0}\right] =\cup _{j=0}^{m-1}%
\left[ t_{j},t_{j+1}\right] $, $m\geq 2$, with $\omega \left(
t_{j},t_{j+1}\right) =\alpha $, $j=0,1,\dots ,m-2$, and $\omega \left(
t_{m-1,}t_{m}\right) \leq \alpha $. Then $\left( m-1\right) \alpha
=\sum_{j=0}^{m-2}\omega \left( t_{j},t_{j+1}\right) $, so%
\begin{equation}
m=\alpha ^{-1}\left( \sum_{j=0}^{m-2}\omega \left( t_{j},t_{j+1}\right)
\right) +1\leq 2\alpha ^{-1}\left( \sum_{j=0}^{m-1}\omega \left(
t_{j},t_{j+1}\right) \right) \leq 2\alpha ^{-1}\omega ^{\alpha }\left(
s,t\right) .  \label{inner bound on m}
\end{equation}%
Firstly, we estimate $\sum_{j=0}^{m-1}\left\Vert \pi _{k}\left(
Y_{t_{j},t_{j+1}}^{1}\right) -\pi _{k}\left( Y_{t_{j},t_{j+1}}^{2}\right)
\right\Vert $ for $k=1,2,\dots ,\left[ p\right] $. By using $\left( \ref%
{inner estimate when interval is less than delta}\right) $ and denote $%
\gamma _{k}:=\alpha ^{\frac{k-1}{p}}$, $\beta :=C_{p,\gamma }\alpha ^{\frac{1%
}{p}}$ and $\lambda _{k,n}:=C_{p,\gamma }\alpha ^{\frac{\left( k-n\right)
\vee 0}{p}}$, we have, ($\omega ^{\alpha }\left( t_{j},t_{j+1}\right)
=\omega \left( t_{j},t_{j+1}\right) \leq \alpha $, $\forall j$) 
\begin{eqnarray*}
&&\sum_{j=0}^{m-1}\left\Vert \pi _{k}\left( Y_{t_{j},t_{j+1}}^{1}\right)
-\pi _{1}\left( Y_{t_{j},t_{j+1}}^{2}\right) \right\Vert \\
&\leq &\alpha ^{\frac{k-1}{p}}\sum_{j=0}^{m-1}C_{p,\gamma }\alpha ^{\frac{1}{%
p}}\left( \left\vert f^{1}-f^{2}\right\vert _{\gamma -1}+\left\Vert \pi
_{1}\left( Y_{t_{j}}^{1}\right) -\pi _{1}\left( Y_{t_{j}}^{2}\right)
\right\Vert \right) +\sum_{j=0}^{m-1}\sum_{n=1}^{\left[ p\right]
}C_{p,\gamma }\alpha ^{\frac{\left( k-n\right) \vee 0}{p}}d_{p,\left[
t_{j},t_{j+1}\right] }^{n,\alpha }\left( X^{1},X^{2}\right) \\
&=&:\gamma _{k}\sum_{j=0}^{m-1}\beta \left( \left\vert
f^{1}-f^{2}\right\vert _{\gamma -1}+\left\Vert \pi _{1}\left(
Y_{t_{j}}^{1}\right) -\pi _{1}\left( Y_{t_{j}}^{2}\right) \right\Vert
\right) +\sum_{j=0}^{m-1}\sum_{n=1}^{\left[ p\right] }\lambda _{k,n}d_{p,%
\left[ t_{j},t_{j+1}\right] }^{n,\alpha }\left( X^{1},X^{2}\right) \text{.}
\end{eqnarray*}%
Then by repeatedly using $\left( \ref{inner estimate when interval is less
than delta}\right) $, we have ($\gamma _{k}\lambda _{1,n}\leq \lambda _{k,n}$%
) 
\begin{eqnarray*}
&&\sum_{j=0}^{m-1}\left\Vert \pi _{k}\left( Y_{t_{j},t_{j+1}}^{1}\right)
-\pi _{1}\left( Y_{t_{j},t_{j+1}}^{2}\right) \right\Vert \\
&\leq &\gamma _{k}\left( m\beta \left\vert f^{1}-f^{2}\right\vert _{\gamma
-1}+\beta \left\Vert Y_{s_{0}}^{1}-Y_{s_{0}}^{2}\right\Vert \right)
+\sum_{j=0}^{m-1}\sum_{n=1}^{\left[ p\right] }\lambda _{k,n}d_{p,\left[
t_{j},t_{j+1}\right] }^{n,\alpha }\left( X^{1},X^{2}\right) \\
&&+\gamma _{k}\beta \sum_{j=0}^{m-2}\left( \left\Vert \pi _{1}\left(
Y_{t_{j}}^{1}\right) -\pi _{1}\left( Y_{t_{j}}^{2}\right) \right\Vert +\beta
\left( \left\vert f^{1}-f^{2}\right\vert _{\gamma -1}+\left\Vert \pi
_{1}\left( Y_{t_{j}}^{1}\right) -\pi _{1}\left( Y_{t_{j}}^{2}\right)
\right\Vert \right) +\sum_{n=1}^{\left[ p\right] }\lambda _{1,n}d_{p,\left[
t_{j},t_{j+1}\right] }^{n,\alpha }\left( X^{1},X^{2}\right) \right) \\
&\leq &\cdots \leq \gamma _{k}\left( m\beta +\left( m-1\right) \beta
^{2}+\left( m-2\right) \beta ^{2}\left( 1+\beta \right) +\cdots +\beta
^{2}\left( 1+\beta \right) ^{m-2}\right) \left\vert f^{1}-f^{2}\right\vert
_{\gamma -1} \\
&&+\gamma _{k}\left( \beta +\beta \left( 1+\beta \right) +\beta \left(
1+\beta \right) ^{2}+\cdots +\beta \left( 1+\beta \right) ^{m-1}\right)
\left\Vert \pi _{1}\left( Y_{s_{0}}^{1}\right) -\pi _{1}\left(
Y_{s_{0}}^{2}\right) \right\Vert \\
&&+\sum_{j=0}^{m-2}\left( 1+\beta +\beta \left( 1+\beta \right) \cdots
+\beta \left( 1+\beta \right) ^{m-2-j}\right) \sum_{n=1}^{\left[ p\right]
}\lambda _{k,n}d_{p,\left[ t_{j},t_{j+1}\right] }^{n,\alpha }\left(
X^{1},X^{2}\right) +\sum_{n=1}^{\left[ p\right] }\lambda _{k,n}d_{p,\left[
t_{m-1},t_{m}\right] }^{n,\alpha }\left( X^{1},X^{2}\right) \\
&\leq &\gamma _{k}\left( \left( 1+\beta \right) ^{m}-1\right) \left(
\left\vert f^{1}-f^{2}\right\vert _{\gamma -1}+\left\Vert \pi _{1}\left(
Y_{s_{0}}^{1}\right) -\pi _{1}\left( Y_{s_{0}}^{2}\right) \right\Vert
\right) +\sum_{n=1}^{\left[ p\right] }\lambda _{k,n}\left(
\sum_{j=0}^{m-1}\left( 1+\beta \right) ^{m-1-j}d_{p,\left[ t_{j},t_{j+1}%
\right] }^{n,\alpha }\left( X^{1},X^{2}\right) \right) \text{.}
\end{eqnarray*}%
When $p$ is not an integer, for $n=1,\dots ,\left[ p\right] $, by using H%
\"{o}lder inequality,%
\begin{align}
\sum_{j=0}^{m-1}\left( 1+\beta \right) ^{m-1-j}d_{p,\left[ t_{j},t_{j+1}%
\right] }^{n}\left( X^{1},X^{2}\right) & \leq \left( \sum_{j=0}^{m-1}\left(
1+\beta \right) ^{\frac{p}{p-n}\left( m-1-j\right) }\right) ^{1-\frac{n}{p}%
}\left( \sum_{j=0}^{m-1}d_{p,\left[ t_{j},t_{j+1}\right] }^{n,\alpha }\left(
X^{1},X^{2}\right) ^{\frac{p}{n}}\right) ^{\frac{n}{p}}
\label{inner estimate when n is small} \\
& \leq C_{p}\left( \frac{\left( 1+\beta \right) ^{\frac{p}{p-n}m}-1}{\beta }%
\right) d_{p,\left[ s,t\right] }^{n,\alpha }\left( X^{1},X^{2}\right) . 
\notag
\end{align}%
When $p$ is an integer, for integer $n<p$, $\left( \ref{inner estimate when
n is small}\right) $ holds; for $n=p$, 
\begin{equation*}
\sum_{j=0}^{m-1}\left( 1+\beta \right) ^{m-1-j}d_{p,\left[ t_{j},t_{j+1}%
\right] }^{n}\left( X^{1},X^{2}\right) \leq \frac{\left( 1+\beta \right)
^{m}-1}{\beta }d_{p,\left[ s,t\right] }^{n,\alpha }\left( X^{1},X^{2}\right) 
\text{.}
\end{equation*}%
Then by using $\left( 1+\beta \right) ^{m}-1\leq m\left( 1+\beta \right)
^{m-1}\beta $, we have ($\lfloor p\rfloor $ denotes the largest integer
which is strictly less than $p$)%
\begin{eqnarray*}
&&\sum_{j=0}^{m-1}\left\Vert \pi _{k}\left( Y_{t_{j},t_{j+1}}^{1}\right)
-\pi _{1}\left( Y_{t_{j},t_{j+1}}^{2}\right) \right\Vert \\
&\leq &C_{p}m\left( 1+\beta \right) ^{\frac{p}{p-\lfloor p\rfloor }m}\left(
\gamma _{k}\beta \left( \left\vert f^{1}-f^{2}\right\vert _{\gamma
-1}+\left\Vert \pi _{1}\left( Y_{s_{0}}^{1}\right) -\pi _{1}\left(
Y_{s_{0}}^{2}\right) \right\Vert \right) +\sum_{n=1}^{\left[ p\right]
}\lambda _{k,n}d_{p,\left[ s,t\right] }^{n,\alpha }\left( X^{1},X^{2}\right)
\right) \text{.}
\end{eqnarray*}%
Combined with $\left( \ref{inner bound on m}\right) $, we have ($\gamma
_{k}:=\alpha ^{\frac{k-1}{p}}$, $\beta :=C_{p,\gamma }\alpha ^{\frac{1}{p}}$%
, $\lambda _{k,n}:=C_{p,\gamma }\alpha ^{\frac{\left( k-n\right) \vee 0}{p}}$%
, $\alpha \in (0,\delta _{p,\gamma }]$ with $\delta _{p,\gamma }\leq 1$) \ 
\begin{eqnarray}
&&\sum_{j=0}^{m-1}\left\Vert \pi _{k}\left( Y_{t_{j},t_{j+1}}^{1}\right)
-\pi _{1}\left( Y_{t_{j},t_{j+1}}^{2}\right) \right\Vert
\label{inner estimate of accumulated level k} \\
&\leq &C_{p,\gamma }\exp \left( C_{p,\gamma }\alpha ^{-1}\omega ^{\alpha
}\left( s,t\right) \right) \left( \alpha ^{\frac{k}{p}}\left( \left\vert
f^{1}-f^{2}\right\vert _{\gamma -1}+\left\Vert \pi _{1}\left(
Y_{s_{0}}^{1}\right) -\pi _{1}\left( Y_{s_{0}}^{2}\right) \right\Vert
\right) +\sum_{n=1}^{\left[ p\right] }\alpha ^{\frac{\left( k-n\right) \vee 0%
}{p}}d_{p,\left[ s,t\right] }^{n,\alpha }\left( X^{1},X^{2}\right) \right) 
\text{.}  \notag
\end{eqnarray}

Then, it can be checked that,%
\begin{eqnarray*}
&&\left\Vert \pi _{k}\left( Y_{s,t}^{1}\right) -\pi _{k}\left(
Y_{s,t}^{2}\right) \right\Vert \\
&\leq &\sum_{j=0}^{m-1}\left\Vert \pi _{k}\left(
Y_{t_{j},t_{j+1}}^{1}\right) -\pi _{k}\left( Y_{t_{j},t_{j+1}}^{2}\right)
\right\Vert \\
&&+\sum_{i_{0}+\cdots +i_{m-1}=k\text{, }i_{s}=0,1,\dots ,k-1}\left\Vert \pi
_{i_{0}}\left( Y_{s,t_{1}}^{1}\right) \otimes \cdots \otimes \pi
_{i_{m-1}}\left( Y_{t_{m-1},t_{m}}^{1}\right) -\pi _{i_{0}}\left(
Y_{s,t_{1}}^{2}\right) \otimes \cdots \otimes \pi _{i_{m-1}}\left(
Y_{t_{m-1},t_{m}}^{2}\right) \right\Vert \text{.}
\end{eqnarray*}%
Based on $\left( \ref{bound on RDE solution in main theorem}\right) $ in
Theorem \ref{Theorem existence and uniqueness} on p\pageref{Theorem
existence and uniqueness}, for $j=0,1,\dots ,m-1$ and $s=1,2,\dots ,\left[ p%
\right] $, $|\hspace{-0.01in}|\pi _{s}(Y_{t_{j},t_{j+1}}^{i})|\hspace{-0.01in%
}|\leq C_{p}\omega \left( t_{j},t_{j+1}\right) ^{\frac{s}{p}}$. Then by
replacing $\omega $ by $C_{p}\omega $, we have 
\begin{equation*}
\left\Vert \pi _{s}\left( Y_{t_{j},t_{j+1}}^{i}\right) \right\Vert \leq 
\frac{\omega \left( t_{j},t_{j+1}\right) ^{\frac{s}{p}}}{\left( \frac{s}{p}%
\right) !}\text{, }s=1,\dots ,\left[ p\right] \text{, }j=0,1,\dots ,m-1\text{%
, }i=1,2\text{.}
\end{equation*}%
Then%
\begin{eqnarray*}
&&\sum_{i_{0}+\cdots +i_{m-1}=k\text{, }i_{s}\leq k-1}\left\Vert \pi
_{i_{0}}\left( Y_{s,t_{1}}^{1}\right) \otimes \cdots \otimes \pi
_{i_{m-1}}\left( Y_{t_{m-1},t_{m}}^{1}\right) -\pi _{i_{0}}\left(
Y_{s,t_{1}}^{2}\right) \otimes \cdots \otimes \pi _{i_{m-1}}\left(
Y_{t_{m-1},t_{m}}^{2}\right) \right\Vert \\
&\leq &\sum_{j=0}^{m-1}\sum_{s=1}^{k-1}\left\Vert \pi _{s}\left(
Y_{t_{j},t_{j+1}}^{1}\right) -\pi _{s}\left( Y_{t_{j},t_{j+1}}^{2}\right)
\right\Vert \left( \sum_{\tsum\nolimits_{i\neq j}s_{i}=k-s,s_{i}\geq
0}\prod\nolimits_{i\neq j}\frac{\omega \left( t_{i},t_{i+1}\right) ^{\frac{%
s_{i}}{p}}}{\left( \frac{s_{i}}{p}\right) !}\right) \text{.}
\end{eqnarray*}%
By using neo-classical inequality (Exer 3.9 \cite{Lyonsnotes}, proved by
Hara and Hino \cite{HaraHino}): for $x_{1},x_{2},\dots ,x_{m}\geq 0$,%
\begin{equation*}
\sum_{s_{1}+\cdots +s_{m}=k,s_{i}\geq 0}\frac{x_{1}^{\frac{s_{1}}{p}}}{%
\left( \frac{s_{1}}{p}\right) !}\cdots \frac{x_{m}^{\frac{s_{m}}{p}}}{\left( 
\frac{s_{m}}{p}\right) !}\leq p^{2m-2}\frac{\left( x_{1}+\cdots
+x_{m}\right) ^{\frac{k}{p}}}{\left( \frac{k}{p}\right) !}\text{,}
\end{equation*}%
we have%
\begin{eqnarray*}
&&\sum_{i_{0}+\cdots +i_{m-1}=k\text{, }i_{s}\leq k-1}\left\Vert \pi
_{i_{0}}\left( Y_{s,t_{1}}^{1}\right) \otimes \cdots \otimes \pi
_{i_{m-1}}\left( Y_{t_{m-1},t_{m}}^{1}\right) -\pi _{i_{0}}\left(
Y_{s,t_{1}}^{2}\right) \otimes \cdots \otimes \pi _{i_{m-1}}\left(
Y_{t_{m-1},t_{m}}^{2}\right) \right\Vert \\
&\leq &C_{p}p^{2m}\sum_{s=1}^{k-1}\left( \omega ^{\alpha }\left( s,t\right)
^{\frac{k-s}{p}}\left( \sum_{j=0}^{m-1}\left\Vert \pi _{s}\left(
Y_{t_{j},t_{j+1}}^{1}\right) -\pi _{s}\left( Y_{t_{j},t_{j+1}}^{2}\right)
\right\Vert \right) \right)
\end{eqnarray*}%
Then, by using $\left( \ref{inner bound on m}\right) $ and $\left( \ref%
{inner estimate of accumulated level k}\right) $, we have%
\begin{eqnarray*}
&&\left\Vert \pi _{k}\left( Y_{s,t}^{1}\right) -\pi _{k}\left(
Y_{s,t}^{2}\right) \right\Vert \\
&\leq &C_{p}p^{2m}\sum_{s=1}^{k}\left( \omega ^{\alpha }\left( s,t\right) ^{%
\frac{k-s}{p}}\left( \sum_{j=0}^{m-1}\left\Vert \pi _{s}\left(
Y_{t_{j},t_{j+1}}^{1}\right) -\pi _{s}\left( Y_{t_{j},t_{j+1}}^{2}\right)
\right\Vert \right) \right) \\
&\leq &C_{p,\gamma }\exp \left( C_{p,\gamma }\alpha ^{-1}\omega ^{\alpha
}\left( s,t\right) \right) \left( \alpha ^{\frac{k}{p}}\left( \left\vert
f^{1}-f^{2}\right\vert _{\gamma -1}+\left\Vert \pi _{1}\left(
Y_{s_{0}}^{1}\right) -\pi _{1}\left( Y_{s_{0}}^{2}\right) \right\Vert
\right) +\sum_{n=1}^{\left[ p\right] }\alpha ^{\frac{\left( k-n\right) \vee 0%
}{p}}d_{p,\left[ s,t\right] }^{n,\alpha }\left( X^{1},X^{2}\right) \right)
\end{eqnarray*}%
Thus, for any $\alpha \in (0,\delta _{p,\gamma }]$ and any $\left[ s,t\right]
$ satisfying $\omega ^{\alpha }\left( s,t\right) >\alpha $, we have%
\begin{eqnarray}
&&\left\Vert \pi _{k}\left( Y_{s,t}^{1}\right) -\pi _{k}\left(
Y_{s,t}^{2}\right) \right\Vert  \label{main continuity result} \\
&\leq &C_{p,\gamma }\exp \left( C_{p,\gamma }\alpha ^{-1}\omega ^{\alpha
}\left( s,t\right) \right)  \notag \\
&&\times \left( \omega ^{\alpha }\left( s,t\right) ^{\frac{k}{p}}\left(
\left\vert f^{1}-f^{2}\right\vert _{\gamma -1}+\left\Vert \pi _{1}\left(
Y_{s}^{1}\right) -\pi _{1}\left( Y_{s}^{2}\right) \right\Vert \right)
+\sum_{n=1}^{\left[ p\right] }\omega ^{\alpha }\left( s,t\right) ^{\frac{%
\left( k-n\right) \vee 0}{p}}d_{p,\left[ s,t\right] }^{n,\alpha }\left(
X^{1},X^{2}\right) \right) \text{,}  \notag
\end{eqnarray}%
On the other hand, based on $\left( \ref{inner estimate when interval is
less than delta}\right) $, $\left( \ref{main continuity result}\right) $
also holds when $\omega ^{\alpha }\left( s,t\right) \leq \alpha $ for $%
\alpha \in (0,\delta _{p,\gamma }]$. To extend $\left( \ref{main continuity
result}\right) $ to all $\alpha \in (0,1]$, by letting $\alpha =\delta
_{p,\gamma }$ in $\left( \ref{main continuity result}\right) $ and using $%
\omega \left( s,u\right) +\omega \left( u,t\right) \leq \omega \left(
s,t\right) $, $\forall s\leq u\leq t$, we have, for $\left[ s,t\right] $
satisfying $\omega \left( s,t\right) \leq 1$,%
\begin{eqnarray*}
&&\left\Vert \pi _{k}\left( Y_{s,t}^{1}\right) -\pi _{k}\left(
Y_{s,t}^{2}\right) \right\Vert \\
&\leq &C_{p,\gamma }\left( \omega \left( s,t\right) ^{\frac{k}{p}}\left(
\left\vert f^{1}-f^{2}\right\vert _{\gamma -1}+\left\Vert \pi _{1}\left(
Y_{s}^{1}\right) -\pi _{1}\left( Y_{s}^{2}\right) \right\Vert \right)
+\sum_{n=1}^{\left[ p\right] }\omega \left( s,t\right) ^{\frac{\left(
k-n\right) \vee 0}{p}}d_{p,\left[ s,t\right] }^{n}\left( X^{1},X^{2}\right)
\right) \text{.}
\end{eqnarray*}%
Then by using definition of control $\omega ^{\alpha }$, it can be proved
that $\left( \ref{inner estimate when interval is less than delta}\right) $
holds for any $\left[ s,t\right] $ satisfying $\omega \left( s,t\right) \leq
1$ and for any $\alpha \in (0,1]$. Then by following the same argument after 
$\left( \ref{inner estimate when interval is less than delta}\right) $, we
have $\left( \ref{main continuity result}\right) $ holds for any $\alpha \in
(0,1]$ and any $\left[ s,t\right] \subseteq \left[ 0,T\right] $. \ By using
sub-additivity of a control, $\left( \ref{main continuity result}\right) $
holds with $\left\Vert \pi _{k}\left( Y_{s,t}^{1}\right) -\pi _{k}\left(
Y_{s,t}^{2}\right) \right\Vert $ replaced by $d_{p,\left[ s,t\right]
}^{k}\left( Y^{1},Y^{2}\right) $.
\end{proof}

\begin{proof}[Proof of Corollary \protect\ref{Corollary error of ODE
approximation}]
\label{Proof of Corollary error of ODE approximation}Fix $D=\left\{
t_{j}\right\} _{j=0}^{n}\subset \left[ 0,T\right] $. That $y^{D}$ takes
value in $G^{\left[ p\right] }\left( \mathcal{U}\right) $ follows from Lemma %
\ref{Lemma simple property of solution of ODE} on p\pageref{Lemma simple
property of solution of ODE}. We assume $\xi =1$. Otherwise, we replace $Y$
and $y^{D}$ by $\xi ^{-1}\otimes Y$ and $\xi ^{-1}\otimes y^{D}$, and
replace $f$ by $f\left( \cdot +\pi _{1}\left( \xi \right) \right) $.

For $j=0,1,\dots ,n$, let $Z^{j}:\left[ t_{j},T\right] \rightarrow G^{\left[
p\right] }\left( \mathcal{V}\right) $ be the solution to the RDE%
\begin{equation*}
dZ^{j}=f\left( Z^{j}\right) dX\text{, }Z_{t_{j}}^{j}=y_{t_{j}}^{D}\text{.}
\end{equation*}%
Then $Z_{t_{j}}^{0}=Y_{t_{j}}$, $Z_{t_{j}}^{j}=y_{t_{j}}^{D}$, and%
\begin{eqnarray}
Y_{t_{j}}-y_{t_{j}}^{D} &=&\tsum\nolimits_{i=0}^{j-1}\left(
Z_{t_{j}}^{i}-Z_{t_{j}}^{i+1}\right)
\label{inner Corollary error of ODE approximation 1} \\
&=&\tsum\nolimits_{i=0}^{j-1}\left( \left(
Z_{t_{i+1}}^{i}-Z_{t_{i+1}}^{i+1}\right) \otimes
Z_{t_{i+1},t_{j}}^{i}+Z_{t_{i+1}}^{i+1}\otimes \left(
Z_{t_{i+1},t_{j}}^{i}-Z_{t_{i+1},t_{j}}^{i+1}\right) \right)  \notag \\
&=&\tsum\nolimits_{i=0}^{j-1}\left( y_{t_{i}}^{D}\otimes \left(
Z_{t_{i},t_{i+1}}^{i}-y_{t_{i},t_{i+1}}^{D}\right) \otimes
Z_{t_{i+1},t_{j}}^{i}+y_{t_{i+1}}^{D}\otimes \left(
Z_{t_{i+1},t_{j}}^{i}-Z_{t_{i+1},t_{j}}^{i+1}\right) \right) \text{.}  \notag
\end{eqnarray}%
Based on $\left( \ref{estimate of difference in main theorem}\right) $ in
Theorem \ref{Theorem existence and uniqueness} on p\pageref{estimate of
difference in main theorem} (difference between RDE solution and ODE
solution), we have 
\begin{equation}
\left\Vert Z_{t_{i},t_{i+1}}^{i}-y_{t_{i},t_{i+1}}^{D}\right\Vert \leq
C_{p}\left( \omega \left( t_{i},t_{i+1}\right) ^{\frac{\left[ p\right] +1}{p}%
}\vee \omega \left( t_{i},t_{i+1}\right) ^{\left[ p\right] }\right) \text{,}
\label{inner Corollary error of ODE approximation 2}
\end{equation}%
Based again on Theorem \ref{Theorem existence and uniqueness} for the bound
on RDE solution (by decomposing big interval as the union of small
intervals, similar as at $\left( \ref{inner estimate on big interval based
on small intervals}\right) $ on p\pageref{inner estimate on big interval
based on small intervals}), for any $\alpha \in (0,1]$, we have%
\begin{equation}
\left\Vert Z_{t_{i+1},t_{j}}^{i}\right\Vert \leq C_{p}\omega ^{\alpha
}\left( 0,T\right) ^{\frac{1}{p}}\vee \left( \alpha ^{-1}\omega ^{\alpha
}\left( 0,T\right) \right) ^{\left[ p\right] }\text{.}
\label{inner Corollary error of ODE approximation 3}
\end{equation}%
According to Theorem \ref{Theorem continuity} on p\pageref{Theorem
continuity} (continuous dependence of RDE solution on initial value) and $%
\left( \ref{inner Corollary error of ODE approximation 2}\right) $,%
\begin{eqnarray}
\left\Vert Z_{t_{i+1},t_{j}}^{i}-Z_{t_{i+1},t_{j}}^{i+1}\right\Vert &\leq
&C_{p,\gamma }\exp \left( C_{p,\gamma }\alpha ^{-1}\omega ^{\alpha }\left(
0,T\right) \right) \left\Vert \pi _{1}\left( Z_{t_{i+1}}^{i}\right) -\pi
_{1}\left( Z_{t_{i+1}}^{i+1}\right) \right\Vert
\label{inner Corollary error of ODE approximation 4} \\
&=&C_{p,\gamma }\exp \left( C_{p,\gamma }\alpha ^{-1}\omega ^{\alpha }\left(
0,T\right) \right) \left\Vert \pi _{1}\left( Z_{t_{i},t_{i+1}}^{i}\right)
-\pi _{1}\left( y_{t_{i},t_{i+1}}^{D}\right) \right\Vert  \notag \\
&\leq &C_{p,\gamma }\exp \left( C_{p,\gamma }\alpha ^{-1}\omega ^{\alpha
}\left( 0,T\right) \right) \left( \omega \left( t_{i},t_{i+1}\right) ^{\frac{%
\left[ p\right] +1}{p}}\vee \omega \left( t_{i},t_{i+1}\right) ^{\left[ p%
\right] }\right) \text{.}  \notag
\end{eqnarray}%
For $k=1,2,\dots ,\left[ p\right] $, denote%
\begin{equation*}
M_{k}:=\max_{l=0,1,\dots ,k}\max_{j=0,1,\dots ,n}\left\Vert \pi _{l}\left(
y_{t_{j}}^{D}\right) \right\Vert \text{.}
\end{equation*}%
Combining $\left( \ref{inner Corollary error of ODE approximation 1}\right) $%
, $\left( \ref{inner Corollary error of ODE approximation 2}\right) $, $%
\left( \ref{inner Corollary error of ODE approximation 3}\right) $ and $%
\left( \ref{inner Corollary error of ODE approximation 4}\right) $, we have,
for $k=1,2,\dots \left[ p\right] $,%
\begin{eqnarray}
&&\max_{j=0,1,\dots ,n}\left\Vert \pi _{k}\left( Y_{t_{j}}\right) -\pi
_{k}\left( y_{t_{j}}^{D}\right) \right\Vert
\label{inner estimate of difference between ODE and RDE} \\
&\leq &C_{p,\gamma }M_{k-1}\exp \left( C_{p,\gamma }\alpha ^{-1}\omega
^{\alpha }\left( 0,T\right) \right) \left( \tsum\nolimits_{j=0}^{n-1}\omega
\left( t_{j},t_{j+1}\right) ^{\frac{\left[ p\right] +1}{p}}\vee \omega
\left( t_{i},t_{i+1}\right) ^{\left[ p\right] }\right) \text{.}  \notag
\end{eqnarray}

Then by mathematical induction, we prove 
\begin{equation}
M_{\left[ p\right] }\leq C_{p,\gamma }\exp \left( C_{p,\gamma }\left( \omega
^{\alpha _{0}}\left( 0,T\right) +\alpha ^{-1}\omega ^{\alpha }\left(
0,T\right) \right) \right) \text{.}  \label{inner bound on M[p]}
\end{equation}%
It is clear that $M_{0}=1$. For $k=1,\dots ,\left[ p\right] $, suppose 
\begin{equation}
M_{k-1}\leq C_{p,\gamma }\exp \left( C_{p,\gamma }\left( \omega ^{\alpha
_{0}}\left( 0,T\right) +\alpha ^{-1}\omega ^{\alpha }\left( 0,T\right)
\right) \right) \text{.}  \label{inner inductive hypothesis in Corollary}
\end{equation}%
Based on similar estimates as that leads to $\left( \ref{inner Corollary
error of ODE approximation 3}\right) $, we have, for $\alpha \in (0,1]$ and $%
k=1,2,\dots ,\left[ p\right] $, ($\xi =1$) 
\begin{equation}
\max_{j=0,1,\dots ,n}\left\Vert \pi _{k}\left( Y_{t_{j}}\right) \right\Vert
\leq C_{p}\left( \omega ^{\alpha }\left( 0,T\right) ^{\frac{k}{p}}\vee
\left( \alpha ^{-1}\omega ^{\alpha }\left( 0,T\right) \right) ^{k}\right) 
\text{,}  \label{inner bound on Y in Corollary}
\end{equation}%
Then by combining $\left( \ref{inner estimate of difference between ODE and
RDE}\right) $, $\left( \ref{inner inductive hypothesis in Corollary}\right) $
with $\left( \ref{inner bound on Y in Corollary}\right) $, we have, with $%
\alpha _{0}:=\max_{t_{j}\in D}\omega \left( t_{j},t_{j+1}\right) $, ($%
M_{k-1}\geq M_{0}=1$) 
\begin{eqnarray*}
\max_{j=0,1,\dots ,n}\left\Vert \pi _{k}\left( y_{t_{j}}^{D}\right)
\right\Vert &\leq &C_{p,\gamma }M_{k-1}\exp \left( C_{p,\gamma }\alpha
^{-1}\omega ^{\alpha }\left( 0,T\right) \right) \left(
1+\tsum\nolimits_{j=0}^{n-1}\omega \left( t_{j},t_{j+1}\right) ^{\frac{\left[
p\right] +1}{p}}\vee \omega \left( t_{j},t_{j+1}\right) ^{\left[ p\right]
}\right) \\
&\leq &C_{p,\gamma }\exp \left( C_{p,\gamma }\left( \omega ^{\alpha
_{0}}\left( 0,T\right) +\alpha ^{-1}\omega ^{\alpha }\left( 0,T\right)
\right) \right) \tprod\nolimits_{j=0}^{n-1}\exp \left( C_{p}\omega \left(
t_{j},t_{j+1}\right) \right) \\
&\leq &C_{p,\gamma }\exp \left( C_{p,\gamma }\left( \omega ^{\alpha
_{0}}\left( 0,T\right) +\alpha ^{-1}\omega ^{\alpha }\left( 0,T\right)
\right) \right) \text{.}
\end{eqnarray*}%
Then%
\begin{equation*}
M_{k}\leq M_{k-1}+\max_{j=0,1,\dots ,n}\left\Vert \pi _{k}\left(
y_{t_{j}}^{D}\right) \right\Vert \leq C_{p,\gamma }\exp \left( C_{p,\gamma
}\left( \omega ^{\alpha _{0}}\left( 0,T\right) +\alpha ^{-1}\omega ^{\alpha
}\left( 0,T\right) \right) \right) \text{.}
\end{equation*}

Combining $\left( \ref{inner estimate of difference between ODE and RDE}%
\right) $ with $\left( \ref{inner bound on M[p]}\right) $, we have%
\begin{equation*}
\left\Vert Y_{T}-y_{T}^{D}\right\Vert \leq C_{p,\gamma }\exp \left(
C_{p,\gamma }\left( \omega ^{\alpha _{0}}\left( 0,T\right) +\alpha
^{-1}\omega ^{\alpha }\left( 0,T\right) \right) \right) \left(
\tsum\nolimits_{j=0}^{n-1}\omega \left( t_{j},t_{j+1}\right) ^{\frac{\left[ p%
\right] +1}{p}}\vee \omega \left( t_{i},t_{i+1}\right) ^{\left[ p\right]
}\right) \text{.}
\end{equation*}
\end{proof}

\section{Acknowledgments}

The research of both authors are supported by European Research Council
under the European Union's Seventh Framework Programme (FP7-IDEAS-ERC) / ERC
grant agreement nr. 291244. The authors acknowledge the support of the
Oxford-Man Institute.

\end{document}